\def\doi#1{   {\href{http://dx.doi.org/#1}
   {{\mdseries\ttfamily DOI}}}}
\def\Xint#1{\mathchoice
  {\XXint\displaystyle\textstyle{#1}}%
  {\XXint\textstyle\scriptstyle{#1}}%
  {\XXint\scriptstyle\scriptscriptstyle{#1}}%
  {\XXint\scriptscriptstyle\scriptscriptstyle{#1}}%
  \!\int}
\def\XXint#1#2#3{{\setbox0=\hbox{$#1{#2#3}{\int}$}
  \vcenter{\hbox{$#2#3$}}\kern-.5\wd0}}
\def\dashint{\Xint-}
\newcommand{\al}{\alpha}                
\newcommand{\om}{\Omega}                \newcommand{\pa}{\partial}
\newcommand{\va}{\varepsilon}           
\newcommand{\be}{\begin{equation}}      \newcommand{\ee}{\end{equation}}
\DeclareMathOperator{\dist}{dist}
\DeclareMathOperator{\avg}{avg}
\DeclareMathOperator{\diam}{diam}
\DeclareMathOperator{\BMO}{BMO}
\DeclareMathOperator{\VMO}{VMO}
\def \endprf{\hfill  {\vrule height6pt width6pt depth0pt}\medskip}
\def\<{\langle}             \def\>{\rangle}
\def\({\left(}                 \def\){\right)}
\numberwithin{equation}{section}
\newtheorem{thm}{Theorem}[section]
 \newtheorem{cor}[thm]{Corollary}
 \newtheorem{lem}[thm]{Lemma}
 \newtheorem{defn}[thm]{Definition}
 \newtheorem{rem}[thm]{Remark}
\title[Homogenization Theory of Elliptic System with Lower Order Terms for Dimension Two]
{Homogenization Theory of Elliptic System with Lower Order Terms for Dimension Two}
\author{Wei Wang}
\address{Department of Mathematics\\Zhejiang University\\Hangzhou 310027, P. R. China}
		\email{student167@126.com}
\author{Ting Zhang$^{*}$}
\thanks{This work is partially supported by the National Natural Science Foundation of China 11931010.}
\thanks{* Corresponding author: Ting Zhang.}
\subjclass{Primary: 35B27; Secondary: 35B25, 35B65, 35J25.}
\address{Department of Mathematics\\Zhejiang University\\Hangzhou 310027, P. R. China}
\email{zhangting79@zju.edu.cn}
\date{}
\begin{document}
\bibliographystyle{plain}

\maketitle
\centerline{School of Mathematical Sciences, Zhejiang University}
   \centerline{Hangzhou 310027, China }
\begin{abstract}
In this paper, we consider the homogenization problem for generalized elliptic systems 
$$ 
\mathcal{L}_{\va}=-\operatorname{div}(A(x/\va)\nabla+V(x/\va))+B(x/\va)\nabla+c(x/\va)+\lambda I
$$
with dimension two. Precisely, we will establish the $ W^{1,p} $ estimates, H\"{o}lder estimates, Lipschitz estimates and $ L^p $ convergence results for $ \mathcal{L}_{\va} $ with dimension two. The operator $ \mathcal{L}_{\va} $ has been studied by Qiang Xu with dimension $ d\geq 3 $ in \cite{Xu1,Xu2} and the case $ d=2 $ is remained unsolved. As a byproduct, we will construct the Green functions for $ \mathcal{L}_{\va} $ with $ d=2 $ and their convergence rates.
\end{abstract}
 \quad\quad\keywords{Keywords: Homogenization, elliptic systems, lower order terms, two dimension.}
\section{Introduction and Main results}

The homogenization theory, which has been gradually established since the 1970s, has high research value both in mathematics and mechanics. Its direct background is the equivalent research scheme of non-uniform material (static loading) or wave or medium (wave or oscillation) forces, elastic waves, and so on. In recent years, considerable advances have been made in the theory of homogenization for second-order linear elliptic systems in the divergence form with rapidly oscillating periodic coefficients,
\begin{align}
L_{\va}=-\operatorname{div}(A(x/\va)\nabla)=-\frac{\pa}{\pa x_i}\left\{a_{ij}^{\alpha\beta}\left(\frac{x}{\va}\right)\frac{\pa}{\pa x_j}\right\},\va>0,\nonumber
\end{align}
in a bounded domain $ \om $ in $ \mathbb{R}^d $, where $ 1\leq i,j\leq d $ and $ 1\leq\alpha,\beta\leq m $.

This paper concerns uniform regularity estimates for second order elliptic systems with lower order terms in a bounded domain in $ \mathbb{R}^2 $. More precisely, we consider
\begin{align}
\mathcal{L}_{\va}=-\operatorname{div}[A(x/\va)\nabla+V(x/\va)]+B(x/\va)\nabla+c(x/\va)+\lambda I,\label{LLva}
\end{align}
for the case that $ x\in\om\subset\mathbb{R}^d $, where $ \om $ is a bounded domain in $ \mathbb{R}^d $ and $ \lambda>0 $ is a sufficiently large constant.

Let $ 1\leq i,j\leq d $ and $ 1\leq\alpha,\beta\leq m $, where $ m\geq 1 $ denotes the number of equations in the system. Assume that the measurable functions $ A=(a_{ij}^{\alpha\beta}(x)):\mathbb{R}^d\to\mathbb{R}^{m^2\times d^2} $, $ V=(V_{i}^{\alpha\beta}(x)):\mathbb{R}^d\to\mathbb{R}^{m^2\times d} $, $ B=(B_i^{\alpha\beta}(x)):\mathbb{R}^d\to\mathbb{R}^{m^2\times d} $ and $ c=(c^{\alpha\beta}(x)):\mathbb{R}^d\to\mathbb{R}^{m^2} $ are coefficients of the elliptic operator $ \mathcal{L}_{\va} $. Furthermore, we can assume that the coefficients satisfy the following conditions:

(1) the uniform elliptic condition
\begin{align}
\mu|\xi|^2\leq a_{ij}^{\alpha\beta}(y)\xi_i^{\alpha}\xi_{j}^{\beta}\leq \mu^{-1}|\xi|^2\label{Ellipticity}
\end{align}
for all $ y\in \mathbb{R}^d $ and $ \xi=(\xi_{i}^{\alpha})\in\mathbb{R}^{m\times d} $, where $ \mu>0 $ is a positive constant;

(2) the periodicity condition
\begin{align}
A(y+z)=A(y),V(y+z)=V(y),B(y+z)=B(y),c(y+z)=c(y)\label{Periodicity}
\end{align}
for all $ y\in \mathbb{R}^d $ and $ z\in \mathbb{Z}^d $;

(3) the boundedness condition
\begin{align}
\max\left\{\left\|V\right\|_{L^{\infty}(\mathbb{R}^d)},\left\|B\right\|_{L^{\infty}(\mathbb{R}^d)},\left\|c\right\|_{L^{\infty}(\mathbb{R}^d)}\right\}\leq\kappa_1,\label{Boundedness}
\end{align}
where $ \kappa_1>0 $ is a positive constant;

(4) the H\"{o}lder regularity
\begin{align}
\max\left\{\left\|A\right\|_{C^{0,\tau}(\mathbb{R}^d)},\left\|V\right\|_{C^{0,\tau}(\mathbb{R}^d)},\left\|B\right\|_{C^{0,\tau}(\mathbb{R}^d)}\right\}\leq\kappa_2,\label{regularity}
\end{align}
where $ \tau\in (0,1) $ and $ \kappa_2>0 $ are positive constants.

 Let $ \kappa=\max\left\{\kappa_1,\kappa_2\right\} $ and we call $ A\in \Lambda(\mu,\tau,\kappa) $ if $ A=A(y) $ satisfies $ \eqref{Ellipticity} $, $ \eqref{Periodicity} $ and $ \eqref{regularity} $.

The uniform regularity estimates of the operator $ \eqref{LLva} $ have been studied by Qiang Xu in \cite{Xu1,Xu2}. In \cite{Xu1}, the author concentrated on the Dirichlet problems and derived uniform $ W^{1,p} $, H\"{o}lder and Lipschitz estimates for the case $ d\geq 3 $. The reason for the assumption $ d\geq 3 $ is that the Sobolev embedding theorem $ W^{1,2}(\mathbb{R}^d)\subset L^{\frac{2d}{d-2}}(\mathbb{R}^d) $, $ (d\geq 3) $ is not valid for the case $ d=2 $. The author also pointed out in \cite{Xu1} that for $ d=1,2 $, results in the paper are still true. For $ d=1 $, we see that it is actually an ODE problem and it should not be hard to solve. Therefore, in this paper, we mainly consider the case that $ d=2 $. The following are the main results of the paper.

\begin{thm}[$ W^{1,p} $ estimates]\label{W1p} Suppose that $ A\in \operatorname{VMO}(\mathbb{R}^2) $ satisfies $ \eqref{Ellipticity} $, $ \eqref{Periodicity} $, other coefficients of $ \mathcal{L}_{\va} $, $ V,B,c $ satisfy $ \eqref{Boundedness} $ and $ \om $ is a  bounded $ C^{1,\eta} $ $ (0<\eta<1) $ domain in $ \mathbb{R}^2 $. Let $ 1<p<\infty $, $ f\in L^p(\om;\mathbb{R}^{m\times 2}) $, $ F\in L^q(\om;\mathbb{R}^m) $ and $ g\in B^{1-\frac{1}{p},p}(\pa\om;\mathbb{R}^m) $, where $ q=\frac{2p}{p+2} $ if $ p>2 $, $ 1<q<\infty $ if $ p=2 $ and $ q=1 $ if $ 1<p<2 $. Then the Dirichlet problem
\begin{align}
\left\{\begin{array}{ccc}
\mathcal{L}_{\va}(u_{\va})=\operatorname{div}(f)+F  &\text{ in }  &\om, \\
 u_{\va}=g & \text{ on } &\pa\om,
\end{array}\right.\label{equation}
\end{align}
has a unique weak solution $ u_{\va}\in W^{1,p}(\om;\mathbb{R}^m) $, whenever $ \lambda\geq\lambda_0 $ and $ \lambda_0=\lambda_0(\mu,\kappa,m) $ is sufficiently large. Furthermore, the solution satisfies the uniform estimate
\begin{align}
\left\|\nabla u_{\va}\right\|_{L^{p}(\om)}\leq C\left\{\left\|f\right\|_{L^p(\om)}+\left\|F\right\|_{L^q(\om)}+\left\|g\right\|_{B^{1-\frac{1}{p},p}(\pa\om)}\right\},\label{W^{1,p} estimates}
\end{align}
where $ C $ depends only on $ \mu,\omega(t),\kappa,\lambda,p,q,m $ and $ \om $.
\end{thm}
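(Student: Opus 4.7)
The plan is a perturbation argument built on top of the $W^{1,p}$ theory for the principal operator $L_\va = -\operatorname{div}(A(x/\va)\nabla)$: for $A \in \operatorname{VMO}(\mathbb{R}^2)$ satisfying $\eqref{Ellipticity}$ and $\eqref{Periodicity}$, the corresponding Dirichlet estimate on a bounded $C^{1,\eta}$ domain in $\mathbb{R}^2$ follows from Shen's real-variable scheme, which is dimension-independent, so I would take this as a black-box input and only transport it to $\mathcal{L}_\va$ through the lower-order terms $V,B,c,\lambda I$. First I would reduce to $g=0$ by extending $g$ to $G \in W^{1,p}(\om;\mathbb{R}^m)$ with $\|G\|_{W^{1,p}} \le C\|g\|_{B^{1-1/p,p}(\pa\om)}$ and studying $u_\va - G$, absorbing the boundary contribution into the right-hand side.

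The base case $p=2$ is Lax--Milgram on $W^{1,2}_0(\om;\mathbb{R}^m)$ for the bilinear form
$$\mathcal{B}_\va[u,v] = \int_\om\Bigl\{\bigl(A(x/\va)\nabla u + V(x/\va)u\bigr)\cdot\nabla v + \bigl(B(x/\va)\nabla u + c(x/\va)u + \lambda u\bigr)v\Bigr\}\,dx;$$
ellipticity $\eqref{Ellipticity}$, boundedness $\eqref{Boundedness}$, and Young's inequality give $\mathcal{B}_\va[u,u] \ge (\mu/2)\|\nabla u\|_{L^2}^2 + (\lambda - C_\kappa)\|u\|_{L^2}^2$, which is coercive once $\lambda \ge \lambda_0(\mu,\kappa,m)$. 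This produces existence, uniqueness, and $\|u_\va\|_{W^{1,2}} \le C\bigl\{\|f\|_{L^2} + \|F\|_{W^{-1,2}(\om)}\bigr\}$; since $W^{1,2}_0(\om) \hookrightarrow L^s(\om)$ for every finite $s$ in $\mathbb{R}^2$, duality gives $\|F\|_{W^{-1,2}} \le C\|F\|_{L^q}$ for any $q>1$, so the $W^{1,2}$ estimate is fully controlled by the data of the theorem.

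For $p>2$ I would rewrite the equation as
$$L_\va u_\va = \operatorname{div}\bigl(f - V(x/\va)u_\va\bigr) + F - B(x/\va)\nabla u_\va - (c(x/\va)+\lambda)u_\va,$$
apply the $W^{1,p}$ bound for $L_\va$, and control the lower-order pieces perturbatively: with $q = 2p/(p+2) < 2$, H\"older's inequality on $\om$ reduces $\|\nabla u_\va\|_{L^q}$ and $\|u_\va\|_{L^q}$ to $\|u_\va\|_{W^{1,2}}$, while $\|u_\va\|_{L^p}$ is controlled by $\|u_\va\|_{W^{1,2}}$ through the subcritical 2D Sobolev embedding. Combined with the $p=2$ estimate this closes the bound on $\|\nabla u_\va\|_{L^p}$. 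The range $1<p<2$ is then handled by duality: the adjoint $\mathcal{L}_\va^* = -\operatorname{div}[A^*(x/\va)\nabla + B^*(x/\va)] + V^*(x/\va)\nabla + c^*(x/\va) + \lambda I$ has the same structural form, so the estimate just obtained applies to $\mathcal{L}_\va^*$ at the exponent $p'>2$. Given $\phi \in C_c^\infty(\om;\mathbb{R}^{m\times 2})$, I solve $\mathcal{L}_\va^* v = -\operatorname{div}(\phi)$ with $v|_{\pa\om}=0$, combine $\|\nabla v\|_{L^{p'}}\le C\|\phi\|_{L^{p'}}$ with the Morrey embedding $W^{1,p'}_0(\om)\hookrightarrow L^\infty(\om)$ in $\mathbb{R}^2$ (valid since $p'>2$), and integrate by parts to get $\int_\om \nabla u_\va\cdot\phi = -\int f\cdot\nabla v + \int F v$; the supremum over $\|\phi\|_{L^{p'}}=1$ yields the desired bound.

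The main obstacle is the $p>2$ step. In Xu's treatment for $d\ge 3$ the sharp Sobolev exponent $2d/(d-2)$ makes the bootstrap from $W^{1,2}$ particularly clean, whereas in dimension two that tool disappears and one is forced to rely on the subcritical embedding $W^{1,2}\hookrightarrow L^s$ for every finite $s$, with constant depending on $s$. This is consistent with Theorem \ref{W1p}, in which $C$ is allowed to depend on $p$ and $q$, but it requires careful bookkeeping of how the intermediate constants depend on $\lambda$ and $\kappa$ so that the $W^{1,2}$ norm is truly absorbed at the end rather than multiplied through the estimate, and this is the nontrivial point I would need to check most carefully.
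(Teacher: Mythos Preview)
Your proposal is correct and follows essentially the same route as the paper: the paper also takes the $W^{1,p}$ estimate for $L_\va$ as a black box (cited from Shen), handles $p=2$ by Lax--Milgram, rewrites $\mathcal{L}_\va u_\va$ as an $L_\va$ equation for $p>2$ and controls the lower-order terms via the $W^{1,2}$ bound and the subcritical 2D embeddings, obtains $1<p<2$ by duality with $\mathcal{L}_\va^*$, and reduces $g\ne 0$ to $g=0$ by Besov extension. The only cosmetic difference is that the paper separates the $\operatorname{div}(f)$ and $F$ right-hand sides into two lemmas before superposing, and your final worry about ``absorbing'' the $W^{1,2}$ norm is unnecessary: there is no circularity, since $\|u_\va\|_{W^{1,2}}$ is bounded directly by the data via the energy estimate.
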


Note that $ A\in \operatorname{VMO}(\mathbb{R}^d) $ if $ A $ satisfy
\begin{align}
\sup_{x\in\mathbb{R}^d,0<\rho<t}\dashint_{B(x,\rho)}\left|A(y)-\dashint_{B(x,\rho)}A\right|dy\leq\omega(t),\text{ and }\lim_{t\to 0}\omega(t)=0,\label{VMO condition}
\end{align}
where $ \omega(t) $ is a continuous nondecreasing function. $ B^{\alpha,p}(\pa\om;\mathbb{R}^m) $ denotes the $ L^p $ Besov space with order $ \alpha $ (see \cite{RA}). For the $ W^{1,p} $ estimates of the elliptic operator $ L_{\va} $ without lower order terms, one can refer to \cite{Shen3, Shen4, Shen5,Shen6}. We mention that the proof of $ W^{1,p} $ estimates for the case $ d=2 $ is almost the same as that for $ d\geq 3 $ given in \cite{Xu1}. The only difference is that the indices $ p $ and $ q $ are changed due to different Sobolev embedding theorems.

\begin{thm}\label{Holder}
Suppose that $ A\in \operatorname{VMO}(\mathbb{R}^2) $ satisfies $ \eqref{Ellipticity} $, $ \eqref{Periodicity} $, and other coefficients of $ \mathcal{L}_{\va} $, $ V,B,c $ satisfy $ \eqref{Boundedness} $ and $ \om $ is a bounded $ C^{1,\eta} $ $ (0<\eta<1) $ domain in $ \mathbb{R}^2 $. Let $ f\in L^p(\om;\mathbb{R}^{m\times 2}) $, $ F\in L^q(\om;\mathbb{R}^m) $ and $ g\in C^{0,\sigma}(\pa\om;\mathbb{R}^m) $, where $ 2<p<\infty $, $ q=\frac{2p}{p+2} $ and $ \sigma=1-\frac{2}{p} $. Then the weak solution to $ \eqref{equation} $ satisfies the uniform estimate
\begin{align}
\left\|u_{\va}\right\|_{C^{0,\sigma}(\om)}\leq C\left\{\left\|F\right\|_{L^q(\om)}+\left\|f\right\|_{L^p(\om)}+\left\|g\right\|_{C^{0,\sigma}(\pa\om)}\right\},\label{Holder estimates}
\end{align}
where $ C $ depends only on $ \mu,\omega(t),\kappa,\lambda,m,p,q,\sigma $ and $ \om $.
\end{thm}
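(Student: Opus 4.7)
The plan is to combine the $W^{1,p}$ bound of Theorem \ref{W1p} with the Morrey embedding $W^{1,p}(\om)\hookrightarrow C^{0,\sigma}(\overline{\om})$, which holds in $\R^2$ for $p>2$ with $\sigma=1-\tfrac{2}{p}$, and to supplement it with a separate boundary Hölder estimate. The supplement is needed because the endpoint embedding $C^{0,\sigma}(\pa\om)\hookrightarrow B^{1-1/p,p}(\pa\om)$ fails when $\sigma=1-\tfrac{2}{p}$ (the defining Slobodeckij integral is logarithmically divergent), so Theorem \ref{W1p} alone cannot absorb the term $\|g\|_{C^{0,\sigma}(\pa\om)}$.

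First I would split $u_\va=u_\va^{(1)}+u_\va^{(2)}$, where $u_\va^{(1)}\in W_0^{1,p}(\om;\R^m)$ solves $\mathcal{L}_\va u_\va^{(1)}=\operatorname{div}(f)+F$ with zero boundary values, and $u_\va^{(2)}$ solves $\mathcal{L}_\va u_\va^{(2)}=0$ with $u_\va^{(2)}=g$ on $\pa\om$. For $u_\va^{(1)}$, Theorem \ref{W1p} produces $\|\nabla u_\va^{(1)}\|_{L^p(\om)}\leq C(\|f\|_{L^p}+\|F\|_{L^q})$; combining Poincaré's inequality with the Morrey embedding then yields the required $C^{0,\sigma}$ bound.

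For $u_\va^{(2)}$ the plan is an Avellaneda-Lin-type boundary compactness argument. The core step is a one-step improvement lemma: there exist $\theta\in(0,\tfrac14)$ and $\va_0>0$ such that, under a normalization $|u_\va|\leq 1$ and $\|g\|_{C^{0,\sigma}}\leq 1$, any solution of $\mathcal{L}_\va u_\va=0$ in $B(x_0,1)\cap\om$ at a boundary point $x_0$ satisfies
$$
\theta^{-\sigma}\dsup_{B(x_0,\theta)\cap\om}\bigl|u_\va-g(x_0)\bigr|\leq\tfrac12
$$
whenever $\va<\va_0$. This is proved by contradiction: a violating sequence would, via the $W^{1,p}$ estimate and Arzelà-Ascoli, subconverge in $L^\infty$ to a solution $u_0$ of the homogenized Dirichlet problem $\mathcal{L}_0 u_0=0$, and classical boundary Hölder regularity for $\mathcal{L}_0$ on the $C^{1,\eta}$ domain provides the $\theta^\sigma$ oscillation decay that contradicts the assumption. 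Iterating this lemma at dyadic scales down to $\va$, and rescaling for finer scales, produces $\|u_\va^{(2)}\|_{C^{0,\sigma}(\om)}\leq C\|g\|_{C^{0,\sigma}(\pa\om)}$.

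The main obstacle is handling the lower-order coefficients $V(x/\va),B(x/\va),c(x/\va)$ in the homogenization limit: one must construct correctors for $V$ and $B$ in two dimensions, identify the homogenized operator $\mathcal{L}_0$, and pass to the limit in the weak formulation so that any subsequential limit solves $\mathcal{L}_0 u_0=0$ with the correct effective coefficients. These steps parallel Xu \cite{Xu1} for $d\geq 3$, with the Sobolev exponents and Hölder index replaced by the two-dimensional analogues $p$, $q=\tfrac{2p}{p+2}$, $\sigma=1-\tfrac{2}{p}$; once they are in place, the estimate \eqref{Holder estimates} follows from the scheme above.
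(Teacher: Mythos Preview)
Your decomposition and the treatment of $u_\va^{(1)}$ are fine and match the paper. However, for $u_\va^{(2)}$ the paper takes a completely different route, and your compactness scheme has a genuine gap.

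The gap is the starting bound. Your one-step improvement is stated under the normalization ``$|u_\va|\le 1$ and $\|g\|_{C^{0,\sigma}}\le 1$'', and the iteration at scale $r_0\sim\diam(\om)$ therefore needs an a priori estimate $\|u_\va^{(2)}\|_{L^\infty(\om)}\le C\|g\|_{C^{0,\sigma}(\pa\om)}$. Under the hypotheses of Theorem~\ref{Holder} (only $A\in\VMO$) this is essentially the Agmon--Miranda maximum principle, which in the paper is Theorem~\ref{Nontangential maximal function estimates} and requires $A\in\Lambda(\mu,\tau,\kappa)$; it is \emph{not} available here. You cannot recover it from Theorem~\ref{W1p} either: for $\sigma=1-\tfrac2p\le\tfrac12$ (i.e.\ $2<p\le 4$) one has $C^{0,\sigma}(\pa\om)\not\subset H^{1/2}(\pa\om)$ on the one-dimensional boundary, so not even the energy estimate gives a bound on $u_\va^{(2)}$; and the embedding $C^{0,\sigma}(\pa\om)\hookrightarrow B^{1-1/p',p'}(\pa\om)$ used to feed Theorem~\ref{W1p} needs $p'<p/2$, which gives $L^\infty$ via Morrey only when $p>4$. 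The sentence ``via the $W^{1,p}$ estimate and Arzel\`a--Ascoli'' hides exactly this issue: the localized $W^{1,p}$ estimate \eqref{W^{1,p} interior estimates} requires $u_\va=0$ on $\Delta(x_0,2r)$, which fails for $u_\va^{(2)}$, and subtracting an extension of $g$ runs into $\nabla g\notin L^p$.

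The paper avoids this entirely. It first constructs the two-dimensional Green function (Theorem~\ref{Green}) and proves the weighted estimate
\[
\int_\om |\nabla_y G_\va(x,y)|\,[\delta(y)]^{\sigma-1}\,dy \le C[\delta(x)]^\sigma .
\]
Then it writes $u_\va^{(2)}=v+w_\va$, where $v$ is the \emph{harmonic} extension of $g$ (so $[v]_{C^{0,\sigma}(\om)}\le C\|g\|_{C^{0,\sigma}(\pa\om)}$ and $|\nabla v(x)|\le C[\delta(x)]^{\sigma-1}$), and represents $w_\va$ by the Green function. The weighted estimate then gives $|w_\va(x)|\le C[\delta(x)]^\sigma$ directly, and a short three-case argument using \eqref{Holder interior estimates} upgrades this to $[w_\va]_{C^{0,\sigma}(\om)}\le C$. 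No maximum principle is needed, and no compactness iteration enters. If you want to salvage a compactness proof under only the $\VMO$ hypothesis, you would first have to prove the $L^\infty$ bound by other means; as it stands, the proposal does not close for $2<p\le 4$.
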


Note that the estimate in Theorem \ref{Holder} is sharp. If we change the $ C^{0,\sigma} $ norm of $ g $ to $ C^{0,1} $ norm, $ \eqref{Holder estimates} $ is easy to verify by using Sobolev embedding theorem, since there exists an extension function $ G\in C^{0,1}(\om;\mathbb{R}^m) $ such that $ G=g $ on $ \pa\om $ and $ \left\|G\right\|_{C^{0,1}(\om)}\leq C\left\|g\right\|_{C^{0,1}(\om)} $. To get the sharp estimate, the author of \cite{Xu1} constructed the corresponding Green functions. The main difference between the cases $ d\geq 3 $ and $ d=2 $ is that the Green functions are different from each other. In this paper, we turn to study the Green functions for $ \mathcal{L}_{\va} $ with $ d=2 $. More precisely we can derive the following result.

\begin{thm}[Green functions of $ \mathcal{L}_{\va} $ with $ d=2 $]\label{Green}
Suppose that $ A $ satisfies $ \eqref{Ellipticity} $, $ \eqref{Periodicity} $, $ \operatorname{VMO} $ condition $ \eqref{VMO condition} $, other coefficients of $ \mathcal{L}_{\va} $, $ V,B,c $ satisfy $ \eqref{Boundedness} $ and $ \om $ is a bounded $ C^{1,\eta} $ $ (0<\eta<1) $ domain in $ \mathbb{R}^2 $. If $ \lambda\geq\lambda_0 $, then there exists a unique Green function $ G_{\va}:\om\times\om\to\mathbb{R}^{m^2}\cup\left\{\infty\right\} $ such that for all $ u_{\va} $ being the weak solution for the Dirichlet problem $ \mathcal{L}_{\va}(u_{\va})=F $ in $ \om $ and $ u_{\va}=0 $ on $ \pa\om $, where $ F\in L^p(\om;\mathbb{R}^m) $, $ (p>1) $, we have
\begin{align}
u_{\va}(x)=\int_{\om}G_{\va}(x,y)F(y)dy.\label{Representation formula}
\end{align}
Furthermore, for the Green function $ G_{\va}^*(x,y) $ corresponding to the dual operator of $ \mathcal{L}_{\va} $ denoted as $ \mathcal{L}_{\va}^* $, we have $ G_{\va}^*(x,y)=G_{\va}(y,x) $. For all $ \sigma_1,\sigma_2,\sigma_3,\sigma_4,\sigma\in (0,1) $, we have
\begin{align}
G_{\va}(\cdot,y)\in \operatorname{BMO}(\om) \text{ that is, } \left\|G_{\va}(\cdot,y)\right\|_*\leq C,\quad \text{uniformly for }y\in\om, \label{BMO estimates}
\end{align}
\begin{align}
|G_{\va}(x,y)|\leq\frac{C}{|x-y|^{\sigma}},\quad\text{for any }x,y\in\om,\label{preliminary}
\end{align}
\begin{align}
|G_{\va}(x,y)|\leq\frac{C[\delta(x)]^{\sigma_1}}{|x-y|^{\sigma_1}},   &\text{ if }   \delta(x)<\frac{1}{4}|x-y|,\label{Pointwise estimates for Green functions 1}\\
|G_{\va}(x,y)|\leq\frac{C[\delta(y)]^{\sigma_2}}{|x-y|^{\sigma_2}},   &\text{ if }   \delta(y)<\frac{1}{4}|x-y|,\label{Pointwise estimates for Green functions 2}\\
|G_{\va}(x,y)|\leq\frac{C[\delta(x)]^{\sigma_1}[\delta(y)]^{\sigma_2}}{|x-y|^{\sigma_1+\sigma_2}},   &\text{ if } \delta(x)<\frac{1}{4}|x-y| \text{ or } \delta(y)<\frac{1}{4}|x-y|,\label{Pointwise estimates for Green functions 3}\\
|G_{\va}(x,y)|\leq C\left(1+\ln\left(\frac{\operatorname{diam}(\om)}{|x-y|}\right)\right), &\text{ if }  \delta(x)\geq \frac{1}{4}|x-y|\text{ and } \delta(y)\geq \frac{1}{4}|x-y|,\label{Pointwise estimates for Green functions 4}
\end{align}
and
\begin{align}
|G_{\va}(x,y)-G_{\va}(z,y)|\leq\frac{C|x-z|^{\sigma_3}}{|x-y|^{\sigma_3}},\quad\text{if }|x-z|<\frac{1}{2}|x-y|,\label{Pointwise estimates for Green functions 5}\\
|G_{\va}(x,y)-G_{\va}(x,z)|\leq\frac{C|y-z|^{\sigma_4}}{|x-y|^{\sigma_4}},\quad\text{if }|y-z|<\frac{1}{2}|x-y|,\label{Pointwise estimates for Green functions 6}
\end{align}
where $ \delta(x)=\operatorname{dist}(x,\pa\om) $ denote the distance from $ x $ to the boundary of $ \om $, $ \operatorname{diam}(\om) $ denotes the diameter of $ \om $ and $ C $ is a constant depending only on $ \sigma_1,\sigma_2,\sigma_3,\sigma_4,\sigma,\mu,\omega(t),\kappa,\lambda,m $ and $ \om $.
\end{thm}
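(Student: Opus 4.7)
The plan is to adapt the duality-based construction used by Xu for $d\geq 3$, replacing the role of the Sobolev embedding $W^{1,2}\hookrightarrow L^{2d/(d-2)}$ (which fails in dimension two) with Theorem \ref{Holder}, whose $L^\infty$ control on H\"older continuous solutions with $L^q$ data ($q>1$) plays the analogous role. For fixed $y\in\om$ and small $\rho>0$, let $G_\va^\rho(\cdot,y)\in W_0^{1,2}(\om;\R^{m\times m})$ be the matrix-valued weak solution (column by column) of the adjoint Dirichlet problem $\mathcal{L}_\va^*[G_\va^\rho(\cdot,y)]=|B(y,\rho)|^{-1}\chi_{B(y,\rho)}I_m$, whose existence and uniqueness for $\lambda\geq\lambda_0$ follow from Theorem \ref{W1p} applied to $\mathcal{L}_\va^*$. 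Testing against $u_\va$ solving $\mathcal{L}_\va u_\va=F$ with $u_\va|_{\pa\om}=0$ yields the averaged representation $\dashint_{B(y,\rho)} u_\va\,dx = \int_\om G_\va^\rho(x,y)^T F(x)\,dx$. By Theorem \ref{Holder}, for $F\in L^q$ with $q=\frac{2p}{p+2}$ and $p>2$, $u_\va$ is H\"older continuous on $\overline{\om}$, so the left-hand side converges to $u_\va(y)$ as $\rho\to 0$; the remaining task is to pass to the limit on the right.

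The uniform a priori bounds on $G_\va^\rho(\cdot,y)$ follow by dualizing Theorem \ref{Holder}. For a test function $F\in L^q$ of unit norm supported in a ball $B(x,r)$ disjoint from $B(y,\rho)$, the representation gives $|\int F\cdot G_\va^\rho(\cdot,y)|\leq \dashint_{B(y,\rho)}|u_\va|\leq \|u_\va\|_{L^\infty(\om)}\leq C$, and a duality argument then bounds $\|G_\va^\rho(\cdot,y)\|_{L^{q'}(B(x,r))}$ uniformly in $\rho$. Combined with interior and boundary H\"older estimates applied to $G_\va^\rho(\cdot,y)$ (which solves the homogeneous adjoint equation outside $B(y,\rho)$ and vanishes on $\pa\om$), this yields the off-diagonal power estimate (\ref{preliminary}) for any $\sigma\in(0,1)$, the boundary decay estimates (\ref{Pointwise estimates for Green functions 1})--(\ref{Pointwise estimates for Green functions 3}) (the $y$-variable estimate following from running the same construction for $\mathcal{L}_\va^*$ and swapping roles), and the H\"older continuity estimates (\ref{Pointwise estimates for Green functions 5})--(\ref{Pointwise estimates for Green functions 6}) by applying Theorem \ref{Holder} at scale $|x-y|/2$ on the off-diagonal ball.

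The two-dimension-specific logarithmic bound (\ref{Pointwise estimates for Green functions 4}) is the heart of the argument. For $x,y$ deep inside $\om$ with $R:=|x-y|$, one chains oscillation estimates $|G_\va^\rho(x,y)-\dashint_{B(x,R_k)}G_\va^\rho(\cdot,y)|\leq C$ across dyadic balls $B(x,R_k)$ with $R_k=2^{-k}R$ (each oscillation bound coming from Theorem \ref{Holder} applied to the homogeneous adjoint equation on $B(x,R_k)$, with the off-diagonal $L^\infty$ bound as input) and telescopes across the roughly $\log_2(\diam(\om)/R)$ relevant scales, anchoring at a large reference scale where the mean of $G_\va^\rho(\cdot,y)$ is bounded independently of $y$. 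This produces (\ref{Pointwise estimates for Green functions 4}), and the BMO estimate (\ref{BMO estimates}) follows from the pointwise log bound, since $\log|x-y|\in \BMO$ and the bounded off-diagonal corrections do not destroy the BMO norm. With the full set of uniform estimates in place, Arzel\`a--Ascoli on compact subsets of $\{(x,y):x\neq y\}$ supplies a subsequential limit $G_\va$, in which one passes to the limit in the averaged representation to prove (\ref{Representation formula}); uniqueness follows because $F$ may be chosen dense in $L^p$, and the adjoint identity $G_\va^*(x,y)=G_\va(y,x)$ follows by performing the parallel construction for $\mathcal{L}_\va^*$ and invoking the bilinear symmetry $\int F^*\cdot u_\va=\int u_\va^*\cdot F$. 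The main obstacle is precisely (\ref{Pointwise estimates for Green functions 4}): one must arrange the dyadic telescoping so that the non-homogeneously rescaling lower-order terms $V,B,c$ contribute only a bounded accumulation per scale, ensuring the total growth is logarithmic rather than polynomial in $\log(\diam(\om)/R)$.
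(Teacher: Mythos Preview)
Your overall strategy---averaged Green functions, duality, local H\"older estimates, passage to the limit---is sound and matches the paper's architecture. However, there is a genuine gap in your derivation of the logarithmic bound \eqref{Pointwise estimates for Green functions 4}, which then propagates to your claim about \eqref{BMO estimates}.

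You propose to control $|G_\va^\rho(x,y)-\dashint_{B(x,R_k)}G_\va^\rho(\cdot,y)|$ at each dyadic scale by applying the local H\"older estimate to the \emph{homogeneous} adjoint equation on $B(x,R_k)$. But to telescope from scale $R=|x-y|$ out to scale $\diam(\om)$ (which is the direction needed to anchor the mean, not $R_k=2^{-k}R$), the balls $B(x,R_k)$ with $R_k>R$ necessarily contain the pole $y$, and on those balls $G_\va^\rho(\cdot,y)$ does \emph{not} solve a homogeneous equation. Local regularity therefore gives no oscillation control at precisely the scales that matter, and the telescoping sum cannot be bounded this way. Your proposal to deduce BMO from the log bound afterwards is also shaky: a pointwise bound by a BMO function does not by itself yield BMO membership.

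The paper resolves this by reversing the logical order: it proves the BMO bound \eqref{BMO estimates} \emph{first}, via an atom lemma (showing $\|v_\va\|_{L^\infty}\leq C$ whenever $\mathcal{L}_\va^*(v_\va)=a$ with $a$ an atom, see \eqref{L^{infty} estimates with atom}) together with $\mathcal{H}^1$--$\BMO$ duality. This is a global argument that is insensitive to the location of the pole, and it immediately yields $|\dashint_{\om_j}G_\va-\dashint_{\om_{j+1}}G_\va|\leq C\|G_\va(\cdot,y)\|_*\leq C$ even on balls $\om_j=\om(x_0,2^jr_1)$ containing $y$. The logarithmic bound then follows by summing these $O(\log(\diam(\om)/|x-y|))$ bounded oscillations and anchoring at the largest scale, together with a separate control of $|G_\va(x_0,y_0)-\dashint_{\om_0}G_\va|$ via \eqref{infty interior estimates} and the preliminary estimate \eqref{preliminary}.

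A secondary issue: you repeatedly invoke Theorem~\ref{Holder}, but in the paper that theorem is \emph{proved using} the Green function (through \eqref{Green function integration estimates} and the representation formula). What you actually need---and what suffices for all your purposes since $g=0$ throughout---are the localized estimates \eqref{Holder interior estimates}, \eqref{infty interior estimates} and the non-sharp global estimate \eqref{Holder estimates elementary}, all of which are available prior to the Green function construction. You should cite those instead to avoid circularity.
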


Here, the space $ \operatorname{BMO}(\om) $ (functions of bounded mean oscillation), is defined in Definition \ref{BMO atom}. To construct the Green functions with $ d=2 $, we will use the methods in \cite{Dong2, Dong1,Taylor}, which are mainly some arguments related to $ \operatorname{BMO} $ space.

In \cite{Xu1}, the methods for the proof of Lipschitz estimates can be applied to the case $ d=2 $ without any difficulty. In this paper, we do not repeat the proof for the sake of simplicity. The theorem is stated as follows.

\begin{thm}[Lipschitz estimates]\label{Lipschitz estimates}
Suppose that $ A\in\Lambda(\mu,\tau,\kappa),V $ satisfies $ \eqref{Periodicity} $, $ \eqref{regularity} $, $ B,c $ satisfy $ \eqref{Boundedness} $, $ \lambda\geq\lambda_{0} $ and $ \om $ is a bounded $ C^{1,\eta} $ $ (0<\eta<1) $ domain in $ \mathbb{R}^2 $. Let $ p>2 $ and $ 0<\sigma\leq\eta $. Then, for any $ f\in C^{0,\sigma}(\om; \mathbb{R}^{m d}), F\in L^{p}(\om;\mathbb{R}^{m}) $, and $ g\in C^{1,\sigma}(\pa\om;\mathbb{R}^{m}) $, the weak solution to $ \eqref{equation} $ satisfies the uniform estimate
\begin{align}
\left\|\nabla u_{\va}\right\|_{L^{\infty}(\om)} \leq C\left\{\|f\|_{C^{0, \sigma}(\om)}+\|F\|_{L^{p}(\om)}+\|g\|_{C^{1,\sigma}(\pa \om)}\right\},\label{Lipschitz estimates e}
\end{align}
where $ C $ depends only on $ \mu,\tau,\kappa,\lambda,p,m, \sigma,\eta $ and $ \om $.
\end{thm}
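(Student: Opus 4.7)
The plan is to follow the compactness method of Avellaneda--Lin, as adapted in \cite{Xu1} to operators with lower order terms, and simply verify that the argument carries over to $d=2$. The workflow has three layers: (i) establish a one-step ``excess decay'' at the scale $\va$ by comparison with a solution of the homogenized equation; (ii) iterate through dyadic scales down to scale $\va$ via a Campanato-type lemma to obtain the Lipschitz bound on all scales $\geq\va$; (iii) use the $C^{0,\tau}$ regularity of $A$ and $V$ assumed in \eqref{regularity} together with classical Schauder theory to handle scales $<\va$.

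The heart of the argument is an approximation lemma on $B(0,1)$. Given a solution $u_\va$ of $\mathcal{L}_\va(u_\va)=\operatorname{div}(f)+F$, one compares it with the solution $u_0$ of the homogenized equation $\mathcal{L}_0(u_0)=\operatorname{div}(\widehat f)+F$ with the same Dirichlet trace. Since $\mathcal{L}_0$ has constant leading coefficient $\widehat A$ and merely constant lower-order coefficients obtained from cell averages and correctors, it enjoys $C^{1,\al}$ estimates for smooth data. A uniform $W^{1,2}$ bound, in which the large-$\lambda$ assumption absorbs $B\nabla+c$, yields via Rellich a strong $L^2$ limit. The associated two-scale expansion
\[
w_\va(x)=u_0(x)+\va\,\chi_j(x/\va)\,\pa_j u_0(x)+\va\,\chi^V(x/\va)\,u_0(x),
\]
using the standard first-order corrector $\chi_j$ for $A$ and an additional corrector $\chi^V$ associated with the flux $V$, has residual bounded in $L^2$ by $C\va^{1/2}\|\text{data}\|$ on interior balls. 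Because $\nabla u_0$ is Hölder continuous, one obtains, for an appropriate constant matrix $M$ and $\theta\in(0,1/4)$ small,
\[
\dashint_{B(0,\theta)}|\nabla u_\va-M|^2\leq C\theta^{2\sigma}\dashint_{B(0,1)}|\nabla u_\va|^2+C\|\text{data}\|^2,\qquad\va\leq\theta.
\]
Iterating this excess decay through scales $\theta^k$ down to $\va$ gives the interior Lipschitz bound.

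At the boundary, the same compactness scheme is run after locally flattening $\pa\om\in C^{1,\eta}$; one uses boundary correctors and the boundary $C^{1,\al}$ regularity of $u_0$ on a $C^{1,\eta}$ domain, while Theorem \ref{Holder} supplies the baseline $C^{0,\sigma}$ control of $u_\va$ near $\pa\om$ needed to close the iteration. The lower-order terms $B\nabla+c+\lambda I$ enter only as zeroth-order perturbations absorbed by the large $\lambda$; the flux $V$ genuinely contributes a corrector, whose construction in $d=2$ follows the $d\ge 3$ template with the Newtonian kernel $|x-y|^{2-d}$ replaced by the logarithmic Green function built in Theorem \ref{Green}.

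The main obstacle is the bookkeeping of logarithmic factors proper to $d=2$: one cannot invoke the Sobolev embedding $W^{1,2}\hookrightarrow L^{2d/(d-2)}$ to obtain a pointwise bound for correctors, so estimates for $\chi^V$ and for the cross terms in the two-scale expansion must be pushed through BMO/John--Nirenberg together with the logarithmic pointwise bound \eqref{Pointwise estimates for Green functions 4}, in the spirit of \cite{Dong2,Dong1,Taylor}. Once these $d=2$ specific estimates are in place, both the interior and boundary iterations, as well as the Schauder step below scale $\va$, are dimension-free, which is precisely why the proof of \cite{Xu1} transplants without substantive change and no separate write-up is required.
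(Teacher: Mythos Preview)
Your overall strategy---the Avellaneda--Lin compactness scheme as adapted in \cite{Xu1}---is exactly what the paper has in mind; indeed the paper gives no separate proof and simply asserts that the argument of \cite{Xu1} ``can be applied to the case $d=2$ without any difficulty.'' On that high-level point you and the paper agree.

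Where you diverge is in your diagnosis of a $d=2$ obstacle, and this diagnosis is mistaken. The corrector you call $\chi^V$ is the paper's $\chi_0$, defined by the \emph{periodic} cell problem \eqref{Corrector 1} on the torus $Y$, not by any Green function on $\om$. Its $L^\infty$ (indeed $C^{1,\tau}$) bound is the Schauder estimate \eqref{boundedness of chi}, which uses only $A\in\Lambda(\mu,\tau,\kappa)$ and $V\in C^{0,\tau}$ and is completely dimension-free; no Sobolev embedding $W^{1,2}\hookrightarrow L^{2d/(d-2)}$, no BMO/John--Nirenberg, and certainly not Theorem~\ref{Green} are involved. Likewise, invoking Theorem~\ref{Holder} as a ``baseline'' is unnecessary: the compactness iteration is self-contained and produces the H\"older control it needs along the way. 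The genuine $d=2$ difficulties in this paper---the failure of $W^{1,2}\hookrightarrow L^{2^*}$ and the logarithmic Green function---arise in the proofs of Theorems~\ref{Green} and~\ref{Holder}, not here. So your proposal is not wrong, but it manufactures work that is not there: once you recognize that the cell correctors are bounded by Schauder on the torus, every step of the three-layer program you outline (excess decay by comparison with $\mathcal{L}_0$, dyadic iteration down to scale $\va$, classical Schauder below scale $\va$) goes through verbatim from \cite{Xu1}, which is precisely the paper's point.
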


The uniform regularity estimates for the operators $ L_{\va} $ without lower order terms are first studied by Fanghua Lin and Marco Avellaneda in \cite{Av}. For the operator with the non-divergence form, see \cite{Av2}. Because of Theorem \ref{Lipschitz estimates}, we can establish the Lipschitz estimates for the Green functions with $ d=2 $ constructed in Theorem \ref{Green}.   To this point, we can obtain the following results.

\begin{thm}[Lipschitz estimates for Green functions]\label{Lipschitz estimates Green}
Suppose that $ A\in\Lambda(\mu,\tau,\kappa), V $ satisfies $ \eqref{Periodicity} $, $ \eqref{regularity} $, $ B,c $ satisfy $ \eqref{Boundedness} $, $ \lambda\geq\lambda_{0} $ and $ \om $ is a bounded $ C^{1,\eta} $ $ (0<\eta<1) $ domain in $ \mathbb{R}^2 $. Then the Green functions $ G_{\va}(x,y) $ constructed in Theorem \ref{Green} satisfy the uniform estimates
\begin{align}
|G_{\va}(x,y)|\leq\frac{C[\delta(x)]}{|x-y|}, &\text{ if } \delta(x)<\frac{1}{4}|x-y|,\label{Pointwise estimates for Green functions 11}\\
|G_{\va}(x,y)|\leq\frac{C[\delta(y)]}{|x-y|}, &\text{ if }\delta(y)<\frac{1}{4}|x-y|,\label{Pointwise estimates for Green functions 21}\\
|G_{\va}(x,y)|\leq\frac{C[\delta(x)][\delta(y)]}{|x-y|^2},   &\text{ if } \delta(x)<\frac{1}{4}|x-y| \text{ or } \delta(y)<\frac{1}{4}|x-y|.\label{Pointwise estimates for Green functions 31}
\end{align}
Furthermore, the Green functions satisfy the following uniformly Lipschitz estimates,
\begin{align}
|\nabla_xG_{\va}(x,y)|&\leq\frac{C}{|x-y|}\min\left\{1,\frac{\delta(y)}{|x-y|}\right\}, \label{Lipschitz estimates Green e 1}\\
|\nabla_yG_{\va}(x,y)|&\leq\frac{C}{|x-y|}\min\left\{1,\frac{\delta(x)}{|x-y|}\right\}, \label{Lipschitz estimates Green e 2}\\
|\nabla_x\nabla_y G_{\va}(x,y)|&\leq\frac{C}{|x-y|^2}, \label{Lipschitz estimates Green e 3}
\end{align}
where $ C $ depends on $ \mu,\tau,\kappa,\lambda,m, \sigma,\eta $ and $ \om $.
\end{thm}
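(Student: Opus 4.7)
The plan is to follow Xu's strategy for $d\ge 3$ from \cite{Xu1}, now made available in $d=2$ by the uniform Lipschitz estimate Theorem \ref{Lipschitz estimates}. The argument will split into two main steps: first we upgrade the H\"older-type pointwise bounds \eqref{Pointwise estimates for Green functions 1}--\eqref{Pointwise estimates for Green functions 3} of Theorem \ref{Green} to sharp exponent one, obtaining \eqref{Pointwise estimates for Green functions 11}--\eqref{Pointwise estimates for Green functions 31}; then we differentiate via the Avellaneda--Lin rescaling technique to obtain \eqref{Lipschitz estimates Green e 1}--\eqref{Lipschitz estimates Green e 3}.

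For the sharp pointwise bounds, we fix $x,y\in\om$ with $\delta(x)<|x-y|/4$, let $\bar x\in\pa\om$ satisfy $|x-\bar x|=\delta(x)$, and set $r=|x-y|/2$. On $D_r:=B(\bar x,r)\cap\om$ the function $v(\cdot)=G_\va(\cdot,y)$ satisfies $\mathcal{L}_\va v=0$ with $v=0$ on $B(\bar x,r)\cap\pa\om$. A local boundary version of Theorem \ref{Lipschitz estimates}, obtained by rescaling, then gives $\|\nabla v\|_{L^\infty(D_{r/2})}\le Cr^{-1}\|v\|_{L^\infty(D_r)}$, and since $v(\bar x)=0$ the fundamental theorem of calculus yields $|G_\va(x,y)|=|v(x)-v(\bar x)|\le C\delta(x)/|x-y|$, which is \eqref{Pointwise estimates for Green functions 11}. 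Estimate \eqref{Pointwise estimates for Green functions 21} is the symmetric statement for the adjoint via $G_\va^*(x,y)=G_\va(y,x)$, and \eqref{Pointwise estimates for Green functions 31} follows by iterating the argument in both variables.

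For the gradient bounds, we fix $x_0\neq y_0$ and set $r=|x_0-y_0|/8$. Since $v=G_\va(\cdot,y_0)$ solves $\mathcal{L}_\va v=0$ on $B(x_0,2r)\cap\om$, the rescaled local Lipschitz estimate yields $|\nabla v(x_0)|\le Cr^{-1}\|v\|_{L^\infty(B(x_0,r)\cap\om)}$; combined with the sharp pointwise bounds from the previous step this produces \eqref{Lipschitz estimates Green e 1}, treating the interior case $\delta(x_0)\ge 4r$ and the boundary case $\delta(x_0)<4r$ separately. Estimate \eqref{Lipschitz estimates Green e 2} then follows by applying the same argument to the adjoint, while for the mixed estimate \eqref{Lipschitz estimates Green e 3} we freeze $x_0$ and regard $w(y):=\nabla_xG_\va(x_0,y)$ as an $\mathcal{L}_\va^*$-solution in $y$ on $B(y_0,r)\cap\om$; a second application of the Lipschitz estimate for $\mathcal{L}_\va^*$ then bounds $\nabla_yw$ in terms of the estimate on $w$ already provided by \eqref{Lipschitz estimates Green e 1}.

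The main obstacle specific to $d=2$ is the absence of a uniform $L^\infty$ bound on $G_\va$: the diagonal singularity is logarithmic, cf.\ \eqref{Pointwise estimates for Green functions 4}, so a naive application of $\|\nabla v\|_{L^\infty}\le Cr^{-1}\|v\|_{L^\infty}$ would introduce an extraneous $\log(1/|x-y|)$ factor destroying the sharpness of \eqref{Lipschitz estimates Green e 1}--\eqref{Lipschitz estimates Green e 3}. The hard part will be removing this log by invoking the BMO bound \eqref{BMO estimates} together with a refined Lipschitz estimate depending only on $\inf_c\|v-c\|_{L^\infty}$; such a refinement is standard for the leading divergence operator, but subtracting a constant generates error contributions from the lower-order coefficients $V,B,c$ which must be absorbed using the largeness of $\lambda$ and a bootstrap argument. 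This interplay between the log-singularity in $d=2$ and the lower-order terms of $\mathcal{L}_\va$ is the key technical point distinguishing the proof from the $d\ge 3$ case.
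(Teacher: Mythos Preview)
Your plan for the pointwise bounds \eqref{Pointwise estimates for Green functions 11}--\eqref{Pointwise estimates for Green functions 31}, for the near-boundary cases of \eqref{Lipschitz estimates Green e 1}, for \eqref{Lipschitz estimates Green e 2} via the adjoint, and for \eqref{Lipschitz estimates Green e 3} by a second application of the localized Lipschitz estimate all agree with the paper.

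The divergence is in the interior case of \eqref{Lipschitz estimates Green e 1}, when $\delta(x_0)\ge\tfrac14|x_0-y_0|$ and $\delta(y_0)\ge\tfrac14|x_0-y_0|$. You propose to subtract a constant $J$ from $v=G_\va(\cdot,y_0)$ and use the BMO bound \eqref{BMO estimates} together with a refined Lipschitz estimate seeing only $\inf_c\|v-c\|$. The paper does something different: it runs a \emph{telescoping} argument. One chooses a geometric sequence $y_j=x_0+\tfrac54(y_{j-1}-x_0)$ marching $y_0$ toward a boundary point $\bar y\in\pa\om$, observes that each difference $G_\va(\cdot,y_j)-G_\va(\cdot,y_{j+1})$ satisfies $\mathcal{L}_\va(\cdot)=0$ exactly in $B(x_0,\tfrac14|x_0-y_j|)$, bounds it in $L^\infty$ by the second-variable H\"older estimate \eqref{Pointwise estimates for Green functions 6}, applies the \emph{homogeneous} localized Lipschitz estimate (Theorem~\ref{localization Lipschitz estimates} with $F=0$), and sums the resulting geometric series; the terminal term $\nabla_xG_\va(x_0,y_N)$ falls under the already-established boundary case since $\delta(y_N)<\tfrac14|x_0-y_N|$.

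Your route can be made to work, but the mechanism you invoke (``largeness of $\lambda$ and a bootstrap'') is not the right one, and there is a genuine obstacle you have not addressed. Subtracting $J$ produces the source $\operatorname{div}(V_\va J)$, and the localized version of Theorem~\ref{Lipschitz estimates} requires the $C^{0,\sigma}$ norm of this source; since $[V(\cdot/\va)]_{C^{0,\sigma}}\sim\va^{-\sigma}$, the resulting bound is not uniform in $\va$. One way to repair this is to subtract, in addition to $J$, the corrector contribution $\va\chi_0(\cdot/\va)J$, which cancels $\operatorname{div}(V_\va J)$ at leading order and leaves sources that are uniformly $L^\infty$-bounded by $C|J|$; the argument then closes using $|J|\le C(1+\log(\operatorname{diam}(\om)/r))\le C/r$. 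The paper's telescoping argument sidesteps this entirely: because each telescoped difference is an exact $\mathcal{L}_\va$-solution, only the sourceless localized Lipschitz estimate is ever invoked, and no $\va$-dependent Hölder norms appear.
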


\begin{thm}[Nontangential maximal function estimates]\label{Nontangential maximal function estimates} Suppose that $ A\in\Lambda(\mu,\tau,\kappa),V,B $ satisfy $ \eqref{Periodicity} $ and $ \eqref{regularity} $, $ c $ satisfies $ \eqref{Boundedness} $, $\lambda \geq \lambda_{0} $ and $ \om $ is a bounded $ C^{1,\eta} $ $ (0<\eta<1) $ domain in $ \mathbb{R}^d $ with $ d\geq 2 $. Let $ 1<p<\infty $, and $ u_{\va} $ be the solution of the $ L^{p} $ Dirichlet problem $ \mathcal{L}_{\va}(u_{\va})=0 $ in $ \om $ and $ u_{\va}=g $ on $ \pa\om $ with $ (u_{\va})^{*}\in L^{p}(\pa\om) $, where $ g\in L^{p}(\pa \om;\mathbb{R}^{m})$ and $ (u_{\va})^{*} $ is the nontangential maximal function.  Then
\begin{align}
\left\|(u_{\va})^{*}\right\|_{L^{p}(\pa \om)} \leq C_{p}\|g\|_{L^{p}(\pa \om)},\label{ww31}
\end{align}
where $ C_{p} $ depends on $ \mu,\tau,\kappa, \lambda,d,m,p,\eta $ and $ \om $. Furthermore, if $ g\in L^{\infty}(\pa\om;\mathbb{R}^{m}) $, we have
\begin{align}
\left\|u_{\va}\right\|_{L^{\infty}(\om)} \leq C\|g\|_{L^{\infty}(\pa\om)},\label{ww32}
\end{align}
where $ C $ depends on $ \mu,\tau,\kappa,\lambda,d,m, \eta $ and $ \om $.
\end{thm}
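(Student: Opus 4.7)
The plan is to represent $u_\varepsilon$ via a Poisson kernel built from the Green function, establish a pointwise bound on this kernel, and then use it to dominate $(u_\varepsilon)^*$ by the Hardy--Littlewood maximal function of $g$ on $\partial\Omega$. For $x \in \Omega$, let $G_\varepsilon^*(\cdot,x)$ denote the Green function of the adjoint operator $\mathcal{L}_\varepsilon^*$ with pole at $x$, furnished by Theorem \ref{Green} (for $d=2$) and by \cite{Xu1,Xu2} (for $d \geq 3$). I would define the Poisson kernel $P_\varepsilon(x,Q)$ for $Q \in \partial\Omega$ as the conormal derivative of $G_\varepsilon^*(\cdot,x)$ at $Q$ with respect to $\mathcal{L}_\varepsilon^*$; schematically,
\[
P_\varepsilon^{\alpha\beta}(x,Q) = -n_i(Q)\big[a_{ij}^{*\gamma\beta}(Q/\varepsilon) + V_j^{*\gamma\beta}(Q/\varepsilon)\big]\,\frac{\partial G_\varepsilon^{*\alpha\gamma}}{\partial y_j}(Q,x).
\]
A standard integration by parts against $\mathcal{L}_\varepsilon u_\varepsilon = 0$, together with the vanishing of $G_\varepsilon^*(\cdot,x)$ on $\partial\Omega$ and the duality $G_\varepsilon^*(y,x) = G_\varepsilon(x,y)$ of Theorem \ref{Green}, then yields
\[
u_\varepsilon(x) = \int_{\partial\Omega} P_\varepsilon(x,Q)\,g(Q)\,d\sigma(Q), \quad x \in \Omega,
\]
initially for smooth $g$ and then by density for $g \in L^p(\partial\Omega)$.

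Next, Theorem \ref{Lipschitz estimates Green} (and its $d \geq 3$ counterpart in \cite{Xu1,Xu2}) gives $|\nabla_y G_\varepsilon^*(Q,x)| \leq C|x-Q|^{-d+1}\min\{1, \delta(x)/|x-Q|\}$ when $Q \in \partial\Omega$ (so $\delta(Q) = 0$); combined with the $L^\infty$ bound on $a^*$ and $V^*$ from \eqref{Boundedness}, this produces the crucial pointwise estimate $|P_\varepsilon(x,Q)| \leq C\,\delta(x)/|x-Q|^d$, uniform in $\varepsilon$. With this kernel bound in hand, fix $P \in \partial\Omega$ and $x \in \gamma(P)$ a nontangential approach region at $P$, so $|x-P| \approx \delta(x)$. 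A standard dyadic decomposition of $\partial\Omega$ into annuli centered at $P$ yields
\[
|u_\varepsilon(x)| \leq C\int_{\partial\Omega} \frac{\delta(x)}{|x-Q|^d}\,|g(Q)|\,d\sigma(Q) \leq C\,\mathcal{M}_{\partial\Omega}(g)(P),
\]
where $\mathcal{M}_{\partial\Omega}$ is the Hardy--Littlewood maximal operator on the $(d-1)$-dimensional $C^{1,\eta}$ surface $\partial\Omega$. Taking the supremum over $x \in \gamma(P)$ and invoking the $L^p$-boundedness of $\mathcal{M}_{\partial\Omega}$ for $1 < p < \infty$ delivers \eqref{ww31}. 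The $L^\infty$ estimate \eqref{ww32} follows at once, since the same kernel bound gives $\int_{\partial\Omega}|P_\varepsilon(x,Q)|\,d\sigma(Q) \leq C$ uniformly in $x \in \Omega$ and $\varepsilon > 0$.

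The main obstacle lies in the rigorous construction of $P_\varepsilon$: one must show that the normal derivative of $G_\varepsilon^*(\cdot,x)$ exists in a classical sense on the $C^{1,\eta}$ boundary with the Lipschitz bound above holding up to $\partial\Omega$, that the representation formula extends from continuous data to $g \in L^p(\partial\Omega)$, and that the lower-order term $V$ is correctly absorbed into the conormal structure. All of this rests on the uniform boundary Lipschitz regularity provided by Theorem \ref{Lipschitz estimates Green}, which in turn is built upon the uniform Lipschitz estimate of Theorem \ref{Lipschitz estimates}. A density argument then transfers both the representation and the pointwise domination of $(u_\varepsilon)^*$ by $\mathcal{M}_{\partial\Omega}(g)$ from smooth boundary data to general $L^p$ data, with constants independent of $\varepsilon$.
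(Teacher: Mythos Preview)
Your approach is essentially the same as the paper's: construct the Poisson kernel from the Green function, derive the pointwise bound $|P_\varepsilon(x,Q)|\leq C\delta(x)/|x-Q|^d$ from the Lipschitz estimates of Theorem~\ref{Lipschitz estimates Green}, and dominate $(u_\varepsilon)^*$ by the Hardy--Littlewood maximal function of $g$ on $\partial\Omega$. The paper itself omits the proof, pointing to \cite{Xu1} for the $d\geq 3$ argument and noting that the $d=2$ case is identical once Theorem~\ref{Lipschitz estimates Green} is in hand; the Poisson kernel the paper writes down (equation~\eqref{Poisson kernel}) differs cosmetically from yours---it carries a $B$-term multiplied by $G_\varepsilon$ rather than a $V^*$-term attached to $\nabla G_\varepsilon^*$---but since $G_\varepsilon(x,\cdot)=G_\varepsilon^*(\cdot,x)$ vanishes on $\partial\Omega$ both reduce to the conormal-derivative term, and your kernel bound is correct.
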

The estimate $ \eqref{ww32} $ is known as the Agmon-Miranda maximum principle, and $ (u_{\va})^* $ is the nontangential maximal function defined as follows.
\begin{align}
(u_{\va})^{*}(x)=\sup\left\{|u_{\va}(y)|:y\in\om\text{ and }|y-x|\leq C_{0}\dist(y,\pa\om)\right\},\label{nontangential maximal function}
\end{align}
for any $ x\in\pa\om $, where $ C_0 $ is a sufficiently large constant number. Because of Theorem \ref{Lipschitz estimates Green}, $ \eqref{ww31} $ follows directly by constructing Poisson kernel $ P_{\va}(x,y) $ associated with $ \mathcal{L}_{\va} $. The Poisson kernel is defined as follows.
\begin{align}
P_{\va}^{\gamma\beta}(x,y)=-n_{j}(y)a_{i j}^{\al\beta}(y/ \va)\pa_{y_j}\left\{G_{\va}^{\alpha \gamma}(x,y)\right\}-n_{j}(y)B_{j}^{\alpha\beta}(y/\va) G_{\va}^{\alpha \gamma}(x,y),\label{Poisson kernel}
\end{align}
with $ n(x)=(n_i(x))_{i=1}^d $ being the outward unit normal to $ \pa\om $. We will omit the proof for Theorem \ref{Nontangential maximal function estimates} since the proof for the case $ d=2 $ is almost the same as the case $ d\geq 3 $, which is already given in \cite{Xu1}. The proof is related to the Lipschitz estimates for the Green functions, for which we assume that $ V $ is divergence-free.

\begin{thm}[Convergence rates]
Suppose that $ A\in\Lambda(\mu,\tau,\kappa),V,B $ satisfy $ \eqref{Periodicity} $ and $ \eqref{regularity} $, $ c $ satisfies $ \eqref{Periodicity} $ and $ \eqref{Boundedness} $, $\lambda \geq \lambda_{0} $ and $ \om $ is a bounded $ C^{1,1} $ domain in $ \mathbb{R}^d $ with $ d\geq 2 $. Let $u_{\va}$ be the weak solution to $\mathcal{L}_{\va}\left(u_{\va}\right)=F$ in $\om$ and $u_{\va}=0$ on $\pa \om$, where $F \in L^{2}\left(\om ; \mathbb{R}^{m}\right)$. Then we have
\begin{align}
\left\|u_{\va}-\Phi_{\va,0}u_0-(\Phi_{\va, k}^{\beta}-P_{k}^{\beta})\pa_k u^{\beta}\right\|_{H_{0}^{1}(\om)}\leq C\va\|F\|_{L^{2}(\om)}\label{Convergence rates L2}
\end{align}
where $ u_0 $ satisfies $\mathcal{L}_{0}(u_0)=F $ in $\om $ and $ u_0=0 $ on $ \pa \om$. Moreover, assume that the coefficients of $\mathcal{L}_{\va}$ satisfy $ \eqref{Ellipticity} $-$ \eqref{regularity}$, then
\begin{align}
\left\|u_{\va}-u_0\right\|_{L^{q}(\om)} \leq C \va\|F\|_{L^{p}(\om)}\label{Convergence rates Lp}
\end{align}
holds for any $F \in L^{p}\left(\om ; \mathbb{R}^{m}\right)$, where $ q=\frac{\text{pd}}{d-p}$ if $1<p<d, q=\infty$ if $p>d$, and $C$ depends on $\mu, \tau, \kappa, \lambda, m, d, p$ and $\om$.
\end{thm}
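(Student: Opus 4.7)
The plan is to adapt the two-scale expansion method of \cite{Xu2}, originally developed for $d \geq 3$, to the full range $d \geq 2$, using the new Green function machinery of Theorem \ref{Green} and Theorem \ref{Lipschitz estimates Green} to handle the $L^p$-convergence step in dimension two. I introduce the first-order correctors $\chi_k^\beta$ for the associated cell operator by solving periodic problems on the unit torus, together with auxiliary zeroth-order correctors $\chi_0$ that absorb the contributions coming from $c$ and $\lambda I$. These give $\Phi_{\va,k}^\beta(x) = P_k^\beta(x) + \va\chi_k^\beta(x/\va)$, so that $\Phi_{\va,k}^\beta - P_k^\beta$ is $O(\va)$ in $L^\infty$ by the H\"older bound $\eqref{regularity}$, while $\Phi_{\va,0}$ plays the analogous role for the leading $u_0$ term.

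For $\eqref{Convergence rates L2}$, I set
\begin{align}
w_\va := u_\va - \Phi_{\va,0} u_0 - (\Phi_{\va,k}^\beta - P_k^\beta)\partial_k u_0^\beta \nonumber
\end{align}
and compute $\mathcal{L}_\va w_\va$. The cell problems for $\chi_k^\beta$ and $\chi_0$ are designed precisely to cancel the formal $O(1)$ oscillating terms, so that $\mathcal{L}_\va w_\va = \operatorname{div}(\va R_{\va,1}) + \va R_{\va,0}$, where the $R_{\va,j}$ are products of bounded periodic functions with $\nabla^2 u_0$ and $\nabla u_0$. Since $\om\in C^{1,1}$ and $\mathcal{L}_0$ has constant coefficients, $\|u_0\|_{H^2(\om)}\leq C\|F\|_{L^2}$, and combined with the $L^\infty$ bound on the correctors the bulk contribution is $O(\va\|F\|_{L^2})$ in $H^{-1}$. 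Since $w_\va$ does not vanish on $\pa\om$, I insert a smoothing operator $S_\va$ on the scale $\va$ and a cutoff $\eta_\va$ supported outside an $O(\va)$-strip of $\pa\om$, modifying the ansatz to have zero trace; the boundary-strip error is controlled by $\|\nabla u_0\|_{L^2(\om_{3\va})}^2\leq C\va\|u_0\|_{H^2}^2$. An energy estimate exploiting the coercivity of $\mathcal{L}_\va$ for $\lambda\geq\lambda_0$ (furnished by Theorem \ref{W1p}) then yields $\eqref{Convergence rates L2}$.

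For $\eqref{Convergence rates Lp}$, I would use duality. Pick $g\in L^{q'}(\om;\mathbb{R}^m)$ with $\|g\|_{L^{q'}}\leq 1$ and solve the dual problems $\mathcal{L}_\va^* v_\va = g$, $\mathcal{L}_0^* v_0 = g$ with zero boundary data. The weak formulations give $\int_\om (u_\va - u_0)\cdot g\,dx = \int_\om F\cdot(v_\va - v_0)\,dx$. Applying the $H^1$ step to the dual problem approximates $v_\va - v_0$ in $H^1_0(\om)$ by $\va\chi_k^*(x/\va)\partial_k v_0$ plus a remainder of $H^1$-norm $O(\va\|g\|_{L^2})$. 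Pairing against $F$ and bounding by the $L^\infty$ bound on $\chi^*$, the classical Calder\'on-Zygmund estimate $\|\nabla v_0\|_{L^{p'}}\leq C\|g\|_{L^{q'}}$ for the constant-coefficient $\mathcal{L}_0^*$, and Sobolev embedding for the $H^1$-remainder ($W^{1,p}\hookrightarrow L^q$ for $p<d$; $W^{1,p}\hookrightarrow L^\infty$ for $p>d$), I obtain $\eqref{Convergence rates Lp}$.

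The main obstacle is the boundary-layer analysis in the $H^1$ step: the sharp $O(\va)$ rate requires balancing a $\va^{1/2}$ loss in the Sobolev control of $\nabla u_0$ on the boundary strip against the $\va^{1/2}$ factor coming from its measure, and any sloppiness in the cutoff/smoothing construction would degrade the exponent. A secondary subtlety specific to $d=2$ is that the borderline exponent $p=d$ is excluded because $W^{1,2}\not\hookrightarrow L^\infty$, which is reflected in the statement's dichotomy $q = pd/(d-p)$ for $p<d$ versus $q=\infty$ for $p>d$.
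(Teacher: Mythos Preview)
Your plan contains a genuine misidentification that breaks the $H^1_0$ step. The objects $\Phi_{\va,k}$ appearing in \eqref{Convergence rates L2} are \emph{not} $P_k^\beta + \va\chi_k^\beta(x/\va)$; they are the Dirichlet correctors defined in \eqref{Drichlet correctors 1}--\eqref{Drichlet correctors 2}, i.e.\ solutions of boundary value problems on $\om$ with $\Phi_{\va,0}=I$ and $\Phi_{\va,k}^\beta=P_k^\beta$ on $\pa\om$. The whole point of this choice is that
\[
w_\va = u_\va - \Phi_{\va,0}u_0 - (\Phi_{\va,k}^\beta - P_k^\beta)\pa_k u_0^\beta
\]
lies in $H_0^1(\om)$ \emph{exactly}, with no boundary layer to excise. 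Your claim that ``$w_\va$ does not vanish on $\pa\om$'' is therefore false for the quantity in the statement, and the cutoff/smoothing machinery you propose is solving the wrong problem. More seriously, the cutoff route you describe cannot reach the sharp $O(\va)$ rate: the term $\va\,\nabla\eta_\va\,\chi_k(x/\va)\pa_k u_0$ contributes $\|\pa_k u_0\|_{L^2(\om_{3\va})}\leq C\va^{1/2}\|u_0\|_{H^2}$ to the $H^1$ norm, and there is no second $\va^{1/2}$ factor to balance it---that ``balancing'' you invoke does not occur. The standard cutoff ansatz yields $O(\va^{1/2})$ in $H^1_0$; getting $O(\va)$ is precisely why Dirichlet correctors (following \cite{Kenig3}, \cite{Xu1}) are used. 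The paper omits the proof of \eqref{Convergence rates L2} as identical to \cite{Xu1}, but that proof hinges on the Dirichlet-corrector definition, not on periodic correctors plus cutoffs.

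For \eqref{Convergence rates Lp}, your duality route is different from the paper's. The paper proves Theorem~\ref{Green functions convergence}, namely $|G_\va(x,y)-G_0(x,y)|\leq C\va|x-y|^{1-d}$, via Lemma~\ref{ww33} and the representation formula, and then obtains \eqref{Convergence rates Lp} immediately from Riesz-potential mapping properties (see the final Remark). Your duality argument is essentially the approach of \cite{Xu1} for $d\geq 3$, and in outline it is sound \emph{provided} you have the correct $H^1_0$ input; the paper even notes that \cite{Xu1} avoids Green-function convergence this way. The advantage of the paper's route is that it gives the pointwise Green-function estimate \eqref{Convergence of Green functions} as an independent result of interest, while your route is shorter but yields only the $L^p\to L^q$ bound.
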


Here, $ \Phi_{\va,k}=(\Phi_{\va,k}^{\alpha\beta}),0\leq k\leq d $ is the Drichlet corrector, associated with $ \mathcal{L}_{\va} $ as follows:
\begin{align}
L_{\va}\left(\Phi_{\va,0}\right)=\operatorname{div}\left(V_{\va}\right)\quad\text{in }\om,\quad\Phi_{\va, k}=I\quad\text{on }\pa\om,\label{Drichlet correctors 1}
\end{align}
and
\begin{align}
L_{\va}(\Phi_{\va,k}^{\beta})=0\quad\text{in }\om,\quad \Phi_{\va,k}^{\beta}=P_{k}^{\beta}\quad\text{on }\pa\om,\label{Drichlet correctors 2}
\end{align}
for $ 1\leq k\leq d $, where $ V_{\va}(x)=V(x/\va), \Phi_{\va,k}^{\beta}=(\Phi_{\va,k}^{1 \beta}, \cdots, \Phi_{\va, k}^{m \beta})\in W^{1,2}(\om;\mathbb{R}^{m})$, and $ P_{k}^{\beta}=x_{k}e^{\beta} $. Here, $ e^{\beta}=(0,...,1,...,0) $ with 1 in the $ \beta^{\operatorname{th }} $ position and $ 0 $ otherwise.

We remark that $ \eqref{Convergence rates L2} $ is a generalization for Theorem 2.4 in \cite{Kenig3}, which is established of the operator $ L_{\va} $. For $ d\geq 3 $, it is proved in \cite{Xu1}. For the case that $ d=2 $, the proof is almost the same and we will omit it for the sake of simplicity. For $ \eqref{Convergence rates Lp} $, it is a generalization of Theorem 3.4 in \cite{Kenig2}, which is about the elliptic operator $ L_{\va} $. In \cite{Xu1}, the author gave the simplified proof for $ d\geq 3 $ without using the convergence rates of the Green functions. In this paper, we will complete the proof of the theorem about convergence rates for the Green functions for $ \mathcal{L}_{\va} $, which is mentioned in \cite{Xu1} without proof. The method is from \cite{Kenig2}.
%==============================================================
\section{Preliminaries}
Before the proofs of the main theorems of the paper, we will first introduce some results of the homogenization problems for the operator $ \mathcal{L}_{\va} $ with $ d\geq 2 $. The homogenized operator of $ \mathcal{L}_{\va} $ is defined as follows.
\begin{align}
\mathcal{L}_{0}=-\operatorname{div}(\widehat{A}\nabla+\widehat{V})+\widehat{B}\nabla+\widehat{c}+\lambda I.\nonumber
\end{align}
The coefficients of the homogenized operator $ \mathcal{L}_0 $, $ \widehat{A}=(\widehat{a}_{ij}^{\alpha\beta}) $, $ \widehat{V}=(\widehat{V}_{i}^{\alpha\beta}) $, $ \widehat{B}=(\widehat{B}_{i}^{\alpha\beta}) $ and $ \widehat{c}=(\widehat{c}^{\alpha\beta}) $ are given by
\begin{align}
\widehat{a}_{ij}^{\alpha\beta}=\int_{Y}[a_{ij}^{\alpha\beta}(y)+a_{ik}^{\alpha\gamma}(y)\pa_k\chi_{j}^{\gamma\beta}(y)]dy,\quad \widehat{V}_{i}^{\alpha\beta}=\int_{Y}[V_{i}^{\alpha\beta}(y)+a_{ij}^{\alpha\gamma}(y)\pa_j\chi_{0}^{\gamma\beta}(y)]dy,\nonumber\\
\widehat{B}_{i}^{\alpha\beta}=\int_{Y}[B_{i}^{\alpha\beta}(y)+B_{j}^{\alpha\gamma}(y)\pa_j\chi_{i}^{\gamma\beta}(y)]dy,\quad \widehat{c}^{\alpha\beta}=\int_{Y}[c^{\alpha\beta}(y)+c_{i}^{\alpha\gamma}(y)\pa_i\chi_{0}^{\gamma\beta}(y)]dy,\label{Homogenization coefficients}
\end{align}
where $ \chi_{k} $ are the corresponding correctors for homogenization problems, defined as $ \chi_{k}=(\chi_k^{\alpha\beta}) $ with $ 0\leq k\leq d $, satisfying
\begin{align}
\left\{\begin{matrix}
L_1(\chi_0)=\operatorname{div}(V)&\text{in}&\mathbb{R}^d, \\
\chi_0\in W_{\operatorname{per}}^{1,2}(Y;\mathbb{R}^{m^2})&\text{and}& \int_{Y}\chi_0dy=0,
\end{matrix}\right.\label{Corrector 1}
\end{align}
and
\begin{align}
\left\{\begin{array}{lcl}
L_1(\chi_k^{\beta}+P_k^{\beta})=0  & \text{ in } &\mathbb{R}^d, \\
\chi_k\in W_{\operatorname{per}}^{1,2}(Y;\mathbb{R}^{m})  & \text{ and } & \int_{Y}\chi_k^{\beta}dy=0,\ 1\leq k\leq d.
\end{array}\right.\label{Corrector 2}
\end{align}
 Here, $ Y=[0,1)^d=\mathbb{R}^d/\mathbb{Z}^d $, $ P_k^{\beta}=x_ke^{\beta} $, and $ W_{\operatorname{per}}^{1,2}(Y;\mathbb{R}^{m}) $ denotes the closure of $ C_{\operatorname{per}}^{\infty}(\om;\mathbb{R}^m) $ in $ W^{1,2}(\om;\mathbb{R}^m) $. Note that $ C_{\operatorname{per}}^{\infty}(\om;\mathbb{R}^m) $ is the subset of $ C^{\infty}(Y;\mathbb{R}^m) $, which collects all $ Y $-periodic vector-valued functions.

Next, we will give some simple conclusions. These conclusions are basic and important for subsequent proofs.
\begin{lem}[\cite{Xu1}, Lemma 2.4]\label{el}
Let $ \om $ be a Lipschitz domain in $ \mathbb{R}^d $, $ d\geq 2 $. Suppose that $ A $ satisfies the ellipticity condition $ \eqref{Ellipticity} $, and other coefficients of $ \mathcal{L}_{\va} $, $ V,B,c $ satisfy $ \eqref{Boundedness} $. Then we have the following properties: for any $ u,v\in W_0^{1,2}(\om;\mathbb{R}^m) $,
\begin{align}
|\langle \mathcal{L}_{\va}(u),v\rangle|\leq C\left\|u\right\|_{W_0^{1,2}(\om)}\left\|v\right\|_{W_0^{1,2}(\om)},\quad c_0\left\|u\right\|_{W_0^{1,2}(\om)}^2\leq\langle \mathcal{L}_{\va}(u),u\rangle, \label{Lax-Milgram bound}
\end{align}
whenever $ \lambda\geq \lambda_0 $, where $ \lambda_0=\lambda_0(\mu,\kappa,m,d) $ is sufficiently large.  Note that $ C $ depends only on $ \mu,\kappa,\lambda,m,d,\om $, and $ c_0 $ depends only on $ \mu,\kappa,m,d,\lambda,\om $.
\end{lem}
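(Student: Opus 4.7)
The plan is to unwind the distributional pairing $\langle \mathcal{L}_\va(u),v\rangle$ by integration by parts and then estimate each of the four types of terms (the principal part, the $V$-term, the $B$-term, and the zeroth-order piece) separately. Explicitly, for $u,v\in W_0^{1,2}(\om;\mathbb{R}^m)$,
\begin{align}
\langle \mathcal{L}_\va(u),v\rangle
= \int_\om \Bigl[a_{ij}^{\alpha\beta}(x/\va)\partial_j u^\beta\,\partial_i v^\alpha
+ V_i^{\alpha\beta}(x/\va)u^\beta\,\partial_i v^\alpha
+ B_i^{\alpha\beta}(x/\va)\partial_i u^\beta\,v^\alpha
+ c^{\alpha\beta}(x/\va)u^\beta v^\alpha
+ \lambda\,u^\alpha v^\alpha\Bigr]\,dx. \nonumber
\end{align}

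For the boundedness half of \eqref{Lax-Milgram bound}, I would apply Cauchy--Schwarz to each of the five terms, use the $L^\infty$ bounds $\|A\|_\infty\le\mu^{-1}$ (from \eqref{Ellipticity}) and $\max\{\|V\|_\infty,\|B\|_\infty,\|c\|_\infty\}\le\kappa_1$ (from \eqref{Boundedness}), and then absorb the $L^2$-factors on $u,v$ into gradient norms via Poincar\'e's inequality on the bounded domain $\om$. This yields the first inequality with a constant $C$ depending on $\mu,\kappa,\lambda,m,d$ and the Poincar\'e constant of $\om$.

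For the coercivity half, the principal term contributes at least $\mu\int_\om|\nabla u|^2$ by \eqref{Ellipticity}. The two cross terms are handled with Young's inequality: for any $\delta>0$,
\begin{align}
\Bigl|\int_\om V_i^{\alpha\beta}(x/\va) u^\beta\partial_i u^\alpha\Bigr|
+\Bigl|\int_\om B_i^{\alpha\beta}(x/\va)\partial_i u^\beta u^\alpha\Bigr|
\le \delta\int_\om|\nabla u|^2+\frac{C(\kappa)}{\delta}\int_\om|u|^2,\nonumber
\end{align}
while the $c$-term is bounded by $\kappa\int_\om|u|^2$. Choosing $\delta=\mu/4$ gives
\begin{align}
\langle \mathcal{L}_\va(u),u\rangle\ge \frac{\mu}{2}\int_\om|\nabla u|^2 + \bigl(\lambda - C_1(\mu,\kappa,m)\bigr)\int_\om |u|^2,\nonumber
\end{align}
and then fixing $\lambda_0:=C_1(\mu,\kappa,m)$ ensures the second summand is nonnegative for $\lambda\ge\lambda_0$. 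A final application of Poincar\'e's inequality on $\om$ upgrades $\tfrac{\mu}{2}\|\nabla u\|_{L^2}^2$ to $c_0\|u\|_{W_0^{1,2}}^2$ with $c_0=c_0(\mu,\kappa,m,d,\lambda,\om)$.

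I do not anticipate a real obstacle here: the lemma is a textbook Lax--Milgram-type energy estimate, with the only mild subtlety being the sign-indefinite first-order terms, which are tamed by the $\lambda I$ zeroth-order piece via Young's inequality. The role of ``$\lambda_0$ sufficiently large'' is precisely to dominate the constant $C_1(\mu,\kappa,m)$ produced when absorbing $|V||u||\nabla u|$ and $|B||u||\nabla u|$ into the elliptic term.
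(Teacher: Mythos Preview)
The paper does not supply its own proof of this lemma; it is quoted directly as \cite{Xu1}, Lemma 2.4, with no argument given. Your proposal is a correct and complete proof, following the standard Lax--Milgram energy computation that one would expect to find in the cited reference: expand the bilinear form, bound each term by Cauchy--Schwarz for continuity, and use ellipticity plus Young's inequality on the cross terms for coercivity, with $\lambda\ge\lambda_0$ absorbing the bad lower-order contributions.
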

%==============================================================
Based on the lemma above and by using the Lax-Milgram theorem, we can obtain the following theorem.
\begin{thm}[\cite{Xu1}, Lemma 2.5]\label{energy}
Under the conditions of Lemma \ref{el}, suppose that $ F\in H^{-1}(\om;\mathbb{R}^m) $ and $ g\in W^{\frac{1}{2},2}(\om;\mathbb{R}^m) $. Then the Dirichlet boundary value problem $ \mathcal{L}_{\va}(u_{\va})=F $ in $ \om $ and $ u_{\va}=g $ on $ \pa\om $, has a unique weak solution $ u_{\va}\in W^{1,2}(\om;\mathbb{R}^m) $, whenever $ \lambda\geq\lambda_0 $, and the solution satisfies the uniform estimate
\begin{align}
\left\|u_{\va}\right\|_{W^{1,2}(\om)}\leq C\left\{\left\|F\right\|_{H^{-1}(\om)}+\left\|g\right\|_{W^{\frac{1}{2},2}(\pa\om)}\right\}, \label{Energy estimates}
\end{align}
where $ C $ depends only on $ \mu,\kappa,m,d $ and $ \om $. Moreover, with one more the periodicity condition $ \eqref{Periodicity} $ on the coefficients of $ \mathcal{L}_{\va} $, we then have $ u_{\va}\rightharpoonup  u $ weakly in $ W^{1,2}(\om;\mathbb{R}^m) $ and strongly in $ L^2(\om;\mathbb{R}^m) $ as $ \va\to 0 $. where $ u $ satisfies $ \mathcal{L}_{0}(u_0)=F $ in $ \om $, and $ u_0=g $ on $ \pa\om $.
\end{thm}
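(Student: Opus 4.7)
The plan is to split the statement into two parts: the well-posedness plus the uniform $W^{1,2}$ bound, and then the homogenization limit. For the first part I would reduce to homogeneous boundary data by a standard extension argument. Using the trace theorem on the Lipschitz domain $\om$, pick $G\in W^{1,2}(\om;\R^m)$ with $G=g$ on $\pa\om$ and $\|G\|_{W^{1,2}(\om)}\leq C\|g\|_{W^{1/2,2}(\pa\om)}$. Setting $v_{\va}:=u_{\va}-G$, the problem becomes: find $v_{\va}\in W_0^{1,2}(\om;\R^m)$ solving $\mathcal{L}_{\va}(v_{\va})=F-\mathcal{L}_{\va}(G)$ in $\om$. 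The right-hand side lies in $H^{-1}(\om;\R^m)$ with norm controlled by $\|F\|_{H^{-1}(\om)}+C\|G\|_{W^{1,2}(\om)}$, since the boundedness hypotheses on $A,V,B,c$ together with duality give $\|\mathcal{L}_{\va}(G)\|_{H^{-1}(\om)}\leq C\|G\|_{W^{1,2}(\om)}$.

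Now I apply the Lax--Milgram theorem to the bilinear form $\langle\mathcal{L}_{\va}(\cdot),\cdot\rangle$ on $W_0^{1,2}(\om;\R^m)$. Lemma \ref{el} gives precisely what is needed: boundedness and (for $\lambda\geq\lambda_0$) coercivity with constant $c_0$ independent of $\va$. This yields a unique $v_{\va}\in W_0^{1,2}(\om;\R^m)$ and the estimate $\|v_{\va}\|_{W^{1,2}(\om)}\leq C\{\|F\|_{H^{-1}(\om)}+\|g\|_{W^{1/2,2}(\pa\om)}\}$. Uniqueness for the original problem follows because any two solutions differ by an element of $W_0^{1,2}$ killed by $\mathcal{L}_{\va}$, which must vanish by coercivity. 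Combining with $\|G\|_{W^{1,2}(\om)}\leq C\|g\|_{W^{1/2,2}(\pa\om)}$ delivers \eqref{Energy estimates}.

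For the convergence statement, the uniform bound just established gives a subsequence $u_{\va_k}\rightharpoonup u_*$ weakly in $W^{1,2}(\om;\R^m)$, and by Rellich--Kondrachov the convergence is strong in $L^2$. The task is to identify $u_*$ as the unique solution $u_0$ of $\mathcal{L}_0(u_0)=F$, $u_0=g$ on $\pa\om$. The standard route is Tartar's method of oscillating test functions: for $\phi\in C_c^\infty(\om;\R^m)$, test the weak formulation of $\mathcal{L}_{\va}(u_{\va})=F$ against $\phi+\va\chi_k^*(x/\va)\pa_k\phi+\va\chi_0^*(x/\va)\phi$, where $\chi_k^*,\chi_0^*$ are the correctors for the adjoint operator $\mathcal{L}_{\va}^*$ analogous to \eqref{Corrector 1}--\eqref{Corrector 2}. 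The periodicity \eqref{Periodicity} lets us pass weakly to the limit in products of the oscillating coefficients and the gradient of $u_{\va}$ via the div--curl lemma, and the corrector structure arranges that the limiting bilinear form is exactly $\langle\mathcal{L}_0 u_*,\phi\rangle$, yielding the homogenized equation with coefficients $\widehat{A},\widehat{V},\widehat{B},\widehat{c}$ from \eqref{Homogenization coefficients}. The boundary trace of $u_*$ equals $g$ by weak continuity of the trace on $W^{1,2}$. Uniqueness of the homogenized problem (again via Lemma \ref{el} applied to $\mathcal{L}_0$, whose coefficients inherit the required bounds and ellipticity of $\widehat{A}$) shows the whole sequence converges.

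The principal technical hurdle is the identification step for the limit equation, specifically handling the lower-order oscillating terms $V(x/\va)\nabla u_{\va}$, $B(x/\va)\nabla u_{\va}$, $c(x/\va)u_{\va}$ simultaneously with the leading divergence-form part. The divergence-form contribution $V(x/\va)$ requires the nontrivial corrector $\chi_0$ of \eqref{Corrector 1} (unlike the pure second-order case), and one must be careful that the adjoint correctors used in the test function produce, upon limit passage, precisely the homogenized lower-order coefficients $\widehat{V},\widehat{B},\widehat{c}$ recorded in \eqref{Homogenization coefficients}. Everything else --- extension, Lax--Milgram, Rellich compactness --- is routine once Lemma \ref{el} is in hand.
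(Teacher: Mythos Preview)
Your proposal is correct and follows precisely the route the paper indicates: the paper does not give a detailed proof of this theorem but simply states that it follows from Lemma~\ref{el} and the Lax--Milgram theorem, citing \cite{Xu1}, Lemma~2.5, for the details. Your reduction via a trace extension, application of Lax--Milgram for existence, uniqueness, and the uniform bound, followed by Rellich compactness and Tartar's oscillating test function method (with the adjoint correctors $\chi_k^*,\chi_0^*$) to identify the homogenized limit, is exactly the standard argument underlying that citation.
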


To simplify the notations, we define that for $ x\in\om $ and $ 0<r<\operatorname{diam}(\om) $,
\begin{align}
\om(x,r)=\om\cap B(x,r)\quad\text{and}\quad\Delta(x,r)=\pa\om\cap B(x,r).\label{om x r}
\end{align}
Also, for $ u\in L^p(E) $ with $ E $ being measurable and $ 1\leq p<\infty $,
\begin{align}
\left\|u\right\|_{L_{\avg}^p(E)}=\left(\dashint_{E}|u(x)|^pdx\right)^{\frac{1}{p}}=\left(\frac{1}{|E|}\int_{E}|u(x)|^pdx\right)^{\frac{1}{p}}.\label{average int}
\end{align}

\begin{lem}[Caccioppoli's inequality]\label{Caccioppoli} Assume that $ A $ satisfies $ \eqref{Ellipticity} $, other coefficients of $ \mathcal{L}_{\va} $, $ V,B,c $ satisfy $ \eqref{Boundedness} $, $ \om $ is a $ C^1 $ bounded domain in $ \mathbb{R}^d $ $ (d\geq 2) $, $ x_0\in\om $ and $ 0<r<\operatorname{diam}(\om) $. If $ \pa\om\cap B(x_0,2r)\neq\emptyset $, assume that $ u_{\va}\in W^{1,2}(\om(x_0,2r);\mathbb{R}^m) $ is a weak solution to
\begin{align}
\left\{\begin{matrix}
\mathcal{L}_{\va}(u_{\va})=\operatorname{div}(f)+F&\text{in}&\om(x_0,2r),\\
u_{\va}=0&\text{on}&\Delta(x_0,2r),
\end{matrix}\right.\nonumber
\end{align}
with $ f\in L^2(\om(x_0,2r);\mathbb{R}^{m\times d}) $, $ F\in L^q(\om(x_0,2r);\mathbb{R}^m) $, $ q=\frac{2d}{d+2} $ if $ d\geq 3 $ and $ q>1 $ if $ d=2 $. If $ \pa\om\cap B(x_0,2r)=\emptyset $, assume that $ u_{\va}\in W^{1,2}(B(x_0,2r);\mathbb{R}^m) $ is a weak solution to
\begin{align}
\mathcal{L}_{\va}(u_{\va})=\operatorname{div}(f)+F\quad\text{in}\quad B(x_0,2r),\nonumber
\end{align}
with the same data $ f $, $ F $ and $ q $. Then there exists $ \lambda_0=\lambda_0(\mu,d,m,\kappa) $, such that for $ \lambda\geq\lambda_0 $, we have the uniform estimate
\begin{align}
\left(\dashint_{\om_r}|\nabla u_{\va}|^2dx\right)^{\frac{1}{2}}\leq\frac{C}{r}\left(\dashint_{\om_{2r}}|u_{\va}|^2dx\right)^{\frac{1}{2}}+C\left(\dashint_{\om_{2r}}|f|^2dx\right)^{\frac{1}{2}}+Cr\left(\dashint_{\om_{2r}}|F|^qdx\right)^{\frac{1}{q}},\label{Caccioppoli inequality}
\end{align}
where $ \om_r=\om(x_0,r) $ and $ C $ depends only on $ \mu,\kappa,\lambda,\mu,m,d,\om $.
\end{lem}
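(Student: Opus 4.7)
The strategy is the standard cutoff/energy method: test the equation against $\varphi^2 u_\va$ for a smooth cutoff $\varphi$ that equals $1$ on $B(x_0,r)$, vanishes near $\pa B(x_0,2r)$, and satisfies $|\nabla\varphi|\leq C/r$. In the boundary case $\varphi^2 u_\va\in W_0^{1,2}(\om(x_0,2r);\R^m)$ because $u_\va$ vanishes on $\Delta(x_0,2r)$ while $\varphi$ vanishes on $\pa B(x_0,2r)$; in the interior case the same property holds on $B(x_0,2r)$. Thus the test function is admissible in both regimes.

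Inserting $v=\varphi^2 u_\va$ into the weak formulation and expanding $\nabla v = 2\varphi u_\va\nabla\varphi + \varphi^2\nabla u_\va$, the ellipticity condition $\eqref{Ellipticity}$ produces a coercive term $\mu\int \varphi^2|\nabla u_\va|^2$. Every other bulk contribution---the cross term from $A$, and all first- and zeroth-order contributions from $V,B,c$, each bounded by $\kappa$ via $\eqref{Boundedness}$---is a product of two factors drawn from $|\nabla u_\va|,|u_\va|,|\nabla\varphi|,\varphi$. Apply Young's inequality with a small parameter $\delta>0$: gradient-squared pieces are absorbed on the left as $\delta\int\varphi^2|\nabla u_\va|^2$, pieces carrying $|\nabla\varphi|^2$ contribute $C_\delta r^{-2}\int_{\om_{2r}}|u_\va|^2$, and pure lower-order pieces contribute $C_\delta\int\varphi^2|u_\va|^2$, which is dominated by the coercive $\lambda\int\varphi^2|u_\va|^2$ once $\lambda\geq\lambda_0(\mu,\kappa,m,d)$ is chosen large enough. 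The $f$-term is routine and gives $\delta\int\varphi^2|\nabla u_\va|^2+C\int_{\om_{2r}}|f|^2+Cr^{-2}\int_{\om_{2r}}|u_\va|^2$.

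The only place where the dimension matters is the $F$-term. By H\"older,
$$
\Bigl|\int F\cdot v\,dx\Bigr|\leq\|F\|_{L^q(\om_{2r})}\,\|\varphi^2 u_\va\|_{L^{q/(q-1)}(\om_{2r})}.
$$
For $d\geq 3$ one takes $q=\frac{2d}{d+2}$ and uses $W_0^{1,2}\hookrightarrow L^{2d/(d-2)}$. In $d=2$ this approach degenerates at $q=1$, so one instead exploits the embedding $W_0^{1,2}(B_{2r})\hookrightarrow L^{q/(q-1)}(B_{2r})$, valid for \emph{every} finite $q/(q-1)$, i.e.\ every $q>1$. A rescaled Sobolev inequality on the disc then bounds $\|\varphi^2 u_\va\|_{L^{q/(q-1)}}$ by $\|\varphi\nabla u_\va\|_{L^2}+r^{-1}\|u_\va\|_{L^2(\om_{2r})}$ up to the correct power of $r$; one final Young step absorbs the gradient factor and yields the $Cr\,(\dashint_{\om_{2r}}|F|^q)^{1/q}$ contribution of $\eqref{Caccioppoli inequality}$.

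The main obstacle is exactly this Sobolev step: in two dimensions the dual of $W_0^{1,2}$ does not embed into any single $L^q$, so the $d\geq 3$ argument of \cite{Xu1} must be modified to accommodate arbitrary $q>1$, with careful tracking of the $r$-scaling so that the correct averaged form survives. Once this is done, collecting all estimates on $\{\varphi=1\}\supset\om(x_0,r)$, passing to averages via $|\om_r|\sim r^2$, and using the $C^1$ character of $\pa\om$ to guarantee $|\om_{2r}|\lesssim|\om_r|$, one obtains $\eqref{Caccioppoli inequality}$.
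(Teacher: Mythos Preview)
Your proposal is correct and matches the paper's approach essentially line by line: test with $\varphi^2 u_\va$, use ellipticity and Young's inequality to absorb cross terms, choose $\lambda_0$ large so the $\lambda$ term swallows the lower-order $|u_\va|^2$ contributions, and handle the $F$-term in $d=2$ via the embedding $W_0^{1,2}\hookrightarrow L^{q/(q-1)}$ for any $q>1$ (the paper rescales to $r=1$ first rather than tracking powers of $r$, but that is cosmetic).
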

\begin{proof}
For the case $ d\geq 3 $, one can find the proof in \cite{Xu1}. For $ d=2 $, the proof is almost the same. We only need to adjust the proof for $ d\geq 3 $ by changing the Sobolev embedding theorem $ W^{1,2}(\mathbb{R}^d)\subset L^{\frac{2d}{d-2}}(\mathbb{R}^d) $ when $ d\geq 3 $ to $ W^{1,2}(\mathbb{R}^2)\subset L^q(\mathbb{R}^2) $ with $ q\geq 2 $ when $ d=2 $. For the sack of completeness, we give the proof for $ d=2 $ as follows. By translation and rescaling, we may assume that $ r=1 $ and $ x_0=0 $. Let $ \varphi\in C_0^{\infty}(B(0,2)) $ be a cut-off function satisfying $ \varphi\equiv 1 $ in $ B(0,1) $, $ \varphi\equiv 0 $ in $ (B(0,\frac{3}{2}))^c $, and $ |\nabla\varphi|\leq C $. Then, by choosing the test function as $ \phi=\varphi^2u_{\va} $, we can obtain
\begin{align}
& \int_{\om(0,2)}[A(x/\va)\nabla u_{\va}+V(x/\va)u_{\va}]\nabla u_{\va}\varphi^2 dx+2\int_{\om(0,2)}[A(x/\va)\nabla u_{\va}+V(x/\va)u_{\va}]\nabla \varphi u_{\va}\varphi dx\nonumber\\
&\quad\quad\quad+\int_{\om(0,2)}B(x/\va)\nabla u_{\va}u_{\va}\varphi^2 dx+\int_{\om(0,2)}c(x/\va)u_{\va}u_{\va}\varphi^2+\lambda|u_{\va}|^2\varphi^2dx\nonumber\\
&\quad\quad =\int_{\om(0,2)}Fu_{\va}\varphi^2dx-\int_{\om(0,2)}f\nabla u_{\va}\varphi^2 dx-2\int_{\om(0,2)}f\nabla\varphi u_{\va}\varphi dx.\nonumber
\end{align}
From the ellipticity condition $ \eqref{Ellipticity} $ and Young's inequality, we have
\begin{align}
& \frac{\mu}{4}\int_{\om(0,2)}\varphi^2|\nabla u_{\va}|^2dx
 \leq (C'-\lambda)\int_{\om(0,2)}\varphi^2|u_{\va}|^2dx\nonumber\\
&\quad\quad+C\int_{\om(0,2)}|\nabla\varphi|^2|u_{\va}|^2dx+C\int_{\om(0,2)}\varphi^2|f|^2dx+\int_{\om(0,2)}\varphi^2|F||u_{\va}|dx,\nonumber
\end{align}
where $ C'=C'(\mu,\kappa,m) $ is a constant. We can derive $ \lambda_1 $, such that for any $ \lambda\geq\lambda_0 $, $ C'-\lambda<0 $. By the Sobolev embedding theorem, we can choose $ 1<q\leq 2 $ (if we prove that the results are true for $ 1<q\leq 2 $, the case for $ q>2 $ is trivial)
\begin{align}
\int_{\om(0,2)}\varphi^2|F||u_{\va}|dx&\leq \left(\int_{\om(0,2)}(\varphi|u_{\va}|)^{\frac{q}{q-1}}dx\right)^{\frac{q-1}{q}}\left(\int_{\om(0,2)}(\varphi|F|)^qdx\right)^{\frac{1}{q}}\nonumber\\
&\leq  C\left(\int_{\om(0,2)}|\nabla(\varphi u_{\va})|^2dx\right)^{\frac{1}{2}}\left(\int_{\om(0,2)}|(\varphi |F|)|^qdx\right)^{\frac{1}{q}}\nonumber\\
&\leq \frac{\mu}{8}\int_{\om(0,2)}|\nabla u_{\va}|^2\varphi^2dx+\frac{\mu}{8}\int_{\om(0,2)}|u_{\va}|^2|\nabla\varphi|^2dx+C\left(\int_{\om(0,2)}|\varphi F|^qdx\right)^{\frac{2}{q}}.\nonumber
\end{align}
Here we use the Sobolev embedding theorem $ W_0^{1,2}(\om(0,2);\mathbb{R}^m)\subset L^{\frac{q}{q-1}}(\om(0,2);\mathbb{R}^m) $ with $ 1<q\leq 2 $. By the definition of $ \varphi $, $ \eqref{Caccioppoli inequality} $ is true.
\end{proof}
%==============================================================
\begin{rem}
More precisely, we can obtain the inequality from $ \om(x_0,tr) $ to $ \om(x_0,sr) $, where $ 0<t<s<1 $. In the proof of this, we can choose the cut-off function $ \varphi $ satisfying $ \varphi\equiv 1 $ in $ B(x_0,tr) $, $ \varphi\equiv 0 $ in $ (B(x_0,sr))^c $ and $ |\nabla\varphi|\leq \frac{C}{(s-t)r} $. Using almost the same arguments, we can obtain
\begin{align}
\left(\dashint_{\om_{tr}}|\nabla u_{\va}|^2dx\right)^{\frac{1}{2}}\leq \frac{C}{(s-t)r}\left(\dashint_{\om_{sr}}|u_{\va}|^2dx\right)^{\frac{1}{2}}+C\left(\dashint_{\om_{sr}}|f|^2dx\right)^{\frac{1}{2}}+Cr\left(\dashint_{\om_{sr}}|F|^qdx\right)^{\frac{1}{q}},\label{28gj}
\end{align}
where $ \om_{tr}=\om(x_0,tr) $ and $ C $ depends only on $ \mu,\kappa,\lambda,m,d,\om $.
\end{rem}
%==============================================================
\begin{rem}
Under the same conditions of Lemma \ref{Caccioppoli}, assume that $ J\in\mathbb{R}^m $ is an arbitrary vector and $ v_{\va}=u_{\va}-J $, the equation corresponding to $ v_{\va} $ is stated as follows
\begin{align}
\mathcal{L}_{\va}(v_{\va})=\operatorname{div}(f+V(x/\va)J)+F-c(x/\va)J-\lambda J.\nonumber
\end{align}
Then, if $ \pa\om\cap B(x,2r)=\emptyset $, for $ v_{\va} $, we can use the Caccioppoli's inequality $ \eqref{Caccioppoli inequality} $ to obtain
\begin{align}
\left(\dashint_{B_{r}}|\nabla u_{\va}|^2\right)^{\frac{1}{2}}&\leq \frac{C}{r}\left(\dashint_{B_{2r}}|u_{\va}-J|^2\right)^{\frac{1}{2}}+C\left(\dashint_{B_{2r}}|f|^2\right)^{\frac{1}{2}}+Cr\left(\dashint_{B_{2r}}|F|^q\right)^{\frac{1}{q}}+C(|J|+|J|r),\label{Caccioppoli rem}
\end{align}
where $ B_r=B(x_0,r) $ and $ C $ depends only on $ \mu,\kappa,\lambda,m,p,q,d $.
\end{rem}

\begin{lem}[\cite{Xu1}, Remark 3.4]\label{interpolation}
Let $ \om $ be a $ C^1 $ bounded domain in $ \mathbb{R}^d $$ (d\geq 2) $, and  $ u\in W^{1,p}(\om;\mathbb{R}^m) $ with $ 2\leq p<\infty $. Then, for all $ \delta>0 $, there exists $ C_{\delta}>0 $ depending only on $ \delta,m,p,d,\om $ such that
\begin{align}
\left\|u\right\|_{L^p(\om)}\leq\delta\left\|\nabla u\right\|_{L^p(\om)}+C_{\delta}\left\|u\right\|_{L^2(\om)}.\label{Little lemma 1}
\end{align}
Moreover, when $ 1\leq p<\infty $ and $ q=\frac{pd}{p+d} $, we have
\begin{align}
\left\|u\right\|_{L^p(\om)}\leq C\left\{\left\|\nabla u\right\|_{L^q(\om)}+\left\|u\right\|_{L^2(\om)}\right\},\label{Little lemma 2}
\end{align}
where $ C $ depends only on $ m,p,d,\om $.
\end{lem}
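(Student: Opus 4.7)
The plan is to treat the two claimed inequalities separately.

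The first inequality, $\|u\|_{L^p(\om)} \leq \delta \|\nabla u\|_{L^p(\om)} + C_\delta \|u\|_{L^2(\om)}$ for $2 \leq p < \infty$, is an Ehrling-type bound that I would prove by a contradiction-compactness argument. If it failed, there would exist $\delta_0>0$ and a sequence $\{u_n\}\subset W^{1,p}(\om;\R^m)$ with $\|u_n\|_{L^p(\om)}=1$ and $\delta_0\|\nabla u_n\|_{L^p(\om)}+n\|u_n\|_{L^2(\om)}<1$. This simultaneously bounds $\{u_n\}$ in $W^{1,p}$ and forces $\|u_n\|_{L^2(\om)}\to 0$. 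Since $\om$ is a bounded $C^1$ domain, the Rellich--Kondrachov theorem gives $W^{1,p}(\om;\R^m) \hookrightarrow\hookrightarrow L^p(\om;\R^m)$, so up to a subsequence $u_n\to u$ strongly in $L^p$ with $\|u\|_{L^p(\om)}=1$. Because $p\geq 2$ and $|\om|<\infty$, this convergence also holds in $L^2$, giving $\|u\|_{L^2(\om)}=0$ and hence $u\equiv 0$, a contradiction.

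For the second inequality, the choice $q=pd/(p+d)$ makes $q$ the Sobolev conjugate of $p$ since $\frac{1}{q}-\frac{1}{d}=\frac{1}{p}$. The standard Sobolev embedding on bounded $C^1$ domains therefore yields $\|u\|_{L^p(\om)}\leq C(\|\nabla u\|_{L^q(\om)}+\|u\|_{L^q(\om)})$, so it suffices to dominate $\|u\|_{L^q(\om)}$ by $\|u\|_{L^2(\om)}$ modulo an absorbable piece. In the regime $q\leq 2$ (automatic when $d=2$, the main setting of this paper, and holding whenever $p\leq 2d/(d-2)$ in higher dimensions) H\"older's inequality on the bounded set $\om$ gives directly $\|u\|_{L^q(\om)}\leq C\|u\|_{L^2(\om)}$. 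In the remaining regime $2<q<p$ one has log-convexity of $L^r$-norms, $\|u\|_{L^q(\om)}\leq\|u\|_{L^2(\om)}^{\alpha}\|u\|_{L^p(\om)}^{1-\alpha}$ with $\frac{1}{q}=\frac{\alpha}{2}+\frac{1-\alpha}{p}$, and I would apply Young's inequality with exponents $1/(1-\alpha)$ and $1/\alpha$ to bound this by $\epsilon\|u\|_{L^p(\om)}+C_\epsilon\|u\|_{L^2(\om)}$; choosing $\epsilon$ small enough absorbs the $L^p$ term into the left-hand side.

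Both arguments are soft. The only real point of care is the compactness step in Part 1, where one must invoke $W^{1,p}(\om;\R^m)\hookrightarrow\hookrightarrow L^p(\om;\R^m)$; this holds because $p$ lies strictly below the Sobolev critical exponent when $p<d$, and trivially when $p\geq d$ via the Morrey embedding into a H\"older space. The interpolation step in Part 2 is more delicate only in the high-dimensional regime $q>2$, but the absorption is unconditional since $1-\alpha\in(0,1)$ makes $\|u\|_{L^p(\om)}^{1-\alpha}$ a strictly sublinear power that Young handles without trouble.
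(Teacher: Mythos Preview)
The paper does not supply its own proof of this lemma; it is simply cited from \cite{Xu1}, Remark~3.4, and is followed only by a remark tracking how the constant in \eqref{Little lemma 2} scales under dilation of the domain. There is therefore no proof in the paper to compare against.

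Your argument is correct and standard. The contradiction--compactness proof of \eqref{Little lemma 1} is the usual Ehrling route, and for \eqref{Little lemma 2} the combination of the Sobolev embedding $W^{1,q}\hookrightarrow L^p$ with either H\"older (when $q\le 2$) or log-convexity plus Young absorption (when $q>2$) is exactly what one expects. One small point worth making explicit in the $q>2$ branch: the absorption step tacitly uses $\|u\|_{L^p(\om)}<\infty$, which is immediate under the standing hypothesis $u\in W^{1,p}(\om;\mathbb{R}^m)$; if the ``Moreover'' clause were read with only $\nabla u\in L^q$ and $u\in L^2$ assumed, a density or truncation argument would be needed first.
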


\begin{rem}
Obviously, the second constant $ C $ in the above Lemma \ref{interpolation} is related to the diameter of $ \om $. Here, for the convenience of later calculation, we need to quantify this. For simplicity, let us assume $ \om=B(0,1) $. Then we can get
\begin{align}
\left\|u\right\|_{L^p(B(0,1))}\leq C_1\left\|\nabla u\right\|_{L^q(B(0,1))}+C_2\left\|u\right\|_{L^2(B(0,1))}.\nonumber
\end{align}
By choosing $ v(x)=u(rx) $ and using the above results, we have
\begin{align}
\left\|u(rx)\right\|_{L^p(B(0,1))}\leq C_1\left\|r\nabla u(rx)\right\|_{L^q(B(0,1))}+C_2\left\|u(rx)\right\|_{L^2(B(0,1))}.\nonumber
\end{align}
After changing the variables, it follows that
\begin{align}
r^{-\frac{d}{p}}\left\|u(x)\right\|_{L^p(B(0,r))}\leq C_1 r^{1-\frac{d}{q}}\left\|\nabla u(x)\right\|_{L^q(B(0,r))}+C_2 r^{-\frac{d}{2}}\left\|u(x)\right\|_{L^2(B(0,r))}.\nonumber
\end{align}
This implies that $ C_1(r)=C_1 $ and $ C_2(r)=C_2r^{\frac{d}{p}-\frac{d}{2}} $. Rewrite the inequality above, we have
\begin{align}
\left\|u\right\|_{L^p(B(0,r))}\leq C_1\left\|\nabla u\right\|_{L^q(B(0,r))}+C_2 r^{\frac{d}{p}-\frac{d}{2}}\left\|u\right\|_{L^2(B(0,r))},\label{interp}
\end{align}
where $ C_1 $, $ C_2 $ depends only on $ m,d,p $.
\end{rem}
%==============================================================
\section{$ W^{1,p} $ estimates for $ \mathcal{L}_{\va} $ with $ d=2 $}
\begin{thm}\label{W1p L}
Assume that $ A\in\operatorname{VMO}(\mathbb{R}^2) $ satisfies $ \eqref{Ellipticity} $, $ \eqref{Periodicity} $, $ \om $ is a $ C^{1,\eta} $ $ (0<\eta<1) $ bounded domain in $ \mathbb{R}^2 $, and $ 1<p<\infty $. Let $ f\in L^p(\om;\mathbb{R}^{m\times 2}) $, $ F\in L^q(\om;\mathbb{R}^m) $, where $ q=\frac{2p}{p+2} $ if $ p>2 $, $ 1<q<\infty $ if $ p=2 $ and $ q=1 $ if $ 1<p<2 $. Then there exists a unique weak solution $ u_{\va}\in W^{1,p}(\om;\mathbb{R}^m) $ of the Dirichlet problem
\begin{align}
\left\{\begin{matrix}
L_{\va}(u_{\va})=\operatorname{div}(f)+F&\text{in}&\om,\\
u_{\va}=0&\text{on}&\pa\om,
\end{matrix}\right.\nonumber
\end{align}
satisfying the uniform estimate
\begin{align}
\left\|\nabla u_{\va}\right\|_{L^{p}(\om)}\leq C\left\{\left\|F\right\|_{L^q(\om)}+\left\|f\right\|_{L^p(\om)}\right\},\label{W^{1,p} estimates 4}
\end{align}
where $ C $ depends only on $ \mu,\omega(t),m,d,p,q,\om $.
\end{thm}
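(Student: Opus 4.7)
My plan is to adapt Xu's proof of the corresponding $W^{1,p}$ result for dimension $d\geq 3$ in \cite{Xu1} to the case $d=2$. The skeleton of the argument is unchanged: reduce the datum $F$ to a divergence-form perturbation, then apply the Shen-type $W^{1,p}$ theorem for divergence operators with VMO coefficients. Only the Sobolev exponents used in the reduction step need to be changed, and this is where the genuine two-dimensional content lies.

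\textbf{Step 1: Reducing $F$ to a divergence-form source.} Given $F\in L^q(\om;\R^m)$, I extend $F$ by zero to $\R^2$ and solve $-\Delta\phi = F$ in a ball $B$ compactly containing $\om$, with $\phi=0$ on $\pa B$. I claim $\|\nabla\phi\|_{L^p(\om)}\leq C\|F\|_{L^q(\om)}$ in each of the three exponent regimes. For $p>2$ and $q=\frac{2p}{p+2}\in(1,2)$, Calder\'on-Zygmund places $\phi\in W^{2,q}(B)$, and the two-dimensional Sobolev embedding $W^{1,q}(B)\hookrightarrow L^p(B)$, which holds precisely because $p=\frac{2q}{2-q}$, yields the bound. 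For $p=2$ and any $q\in(1,\infty)$, the same embedding places $\nabla\phi$ in $L^s(B)$ for some $s>2$, so $\nabla\phi\in L^2(\om)$ by boundedness of $\om$. For $1<p<2$ and $q=1$, the Green's-function representation gives $|\nabla\phi(x)|\leq C I_1(|F|)(x)$, where $I_1$ is the Riesz potential of order one, and the weak-type bound $I_1:L^1\to L^{2,\infty}$ together with the Lorentz embedding $L^{2,\infty}(\om)\hookrightarrow L^p(\om)$ for $p<2$ completes the claim. Setting $\tilde f := f - \nabla\phi$, the equation becomes $L_\va(u_\va) = \mathrm{div}(\tilde f)$ with $\|\tilde f\|_{L^p(\om)}\leq C(\|f\|_{L^p(\om)}+\|F\|_{L^q(\om)})$.

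\textbf{Step 2: Pure divergence-form VMO estimate and existence.} For the reduced problem $L_\va(u_\va)=\mathrm{div}(\tilde f)$ in $\om$, $u_\va=0$ on $\pa\om$, with $A\in\mathrm{VMO}$ satisfying \eqref{Ellipticity} and \eqref{Periodicity}, the estimate $\|\nabla u_\va\|_{L^p(\om)}\leq C\|\tilde f\|_{L^p(\om)}$ is Shen's theorem from \cite{Shen3,Shen4,Shen5,Shen6}. Its proof is a real-variable good-$\lambda$ decomposition driven by interior and boundary Caccioppoli inequalities and VMO smallness at small scales, a machinery that is dimension-independent and transfers verbatim to $d=2$. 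Combined with Step~1 this yields the $a$~priori bound \eqref{W^{1,p} estimates 4}. Existence for $p=2$ is supplied by Theorem \ref{energy}; for general $p$, I approximate $(f,F)$ by smooth compactly supported data converging in $L^p\times L^q$, solve the corresponding Dirichlet problem in $W^{1,2}$ for each approximant, and apply the $a$~priori estimate to show that the sequence is Cauchy in $W^{1,p}(\om;\R^m)$. Uniqueness follows by applying the same estimate to the difference of two solutions.

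\textbf{Main obstacle.} The subtle endpoint is $1<p<2$, $q=1$: the Newton-potential reduction here rests on the weak-$L^2$ bound for the Riesz potential of an $L^1$ function, which does not immediately give an $L^p$ bound and must be combined with the boundedness of $\om$ via a Lorentz-space embedding. This is the step where the two-dimensional proof genuinely differs from the $d\geq 3$ treatment of \cite{Xu1}, where the dual Sobolev exponent $q=\frac{2d}{d+2}$ stays strictly above $1$ for every admissible $p$ and no endpoint argument is required.
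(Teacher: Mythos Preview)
Your argument is correct, but you should know that the paper does not prove this statement at all: its entire proof reads ``See Theorem 5.3.1 in \cite{Shen1}.'' So rather than adapting a proof from \cite{Xu1}, the paper simply invokes Shen's book for the result on $L_\va$ (the operator \emph{without} lower order terms) and moves on.

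Your route is nonetheless sound and genuinely different from how the paper later handles the $F$ contribution for the full operator $\mathcal{L}_\va$. You absorb $F$ into the divergence source via a Newtonian potential $\phi$ with $-\Delta\phi=F$, and then appeal to the pure $\operatorname{div}(\tilde f)$ estimate; the delicate case $1<p<2$, $q=1$ you resolve by the weak-type bound $I_1:L^1\to L^{2,\infty}$ and the Lorentz embedding $L^{2,\infty}(\om)\hookrightarrow L^p(\om)$. By contrast, when the paper treats the analogous $F$ term for $\mathcal{L}_\va$ in Lemma \ref{ww5}, it uses duality: pair against a solution $v_\va$ of the adjoint problem with source $\operatorname{div}(f)$, and in the range $1<p<2$ exploit the two-dimensional embedding $W^{1,p'}\hookrightarrow L^\infty$ (valid since $p'>2$) to control $\|v_\va\|_{L^\infty}$ against $\|F\|_{L^1}$. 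Your potential reduction is more constructive and stays within the single problem; the paper's duality argument avoids endpoint harmonic analysis but requires solving an auxiliary adjoint problem. Either way the conclusion is the same, and for the specific statement at hand the paper simply defers to \cite{Shen1}.
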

\begin{proof}
See Theorem 5.3.1 in \cite{Shen1}.
\end{proof}

Using Theorem \ref{W1p L} and some iteration arguments, we can prove the $ W^{1,p} $ estimates for the operator $ \mathcal{L}_{\va} $.

\begin{lem}\label{ww2}
Assume that $ A\in\operatorname{VMO}(\mathbb{R}^2) $ satisfies $ \eqref{Ellipticity} $, $ \eqref{Periodicity} $, and other coefficients of $ \mathcal{L}_{\va} $, $ V,B,c $ satisfy $ \eqref{Boundedness} $, $ \om $ is a $ C^{1,\eta} $ $ (0<\eta<1) $ bounded domain in $ \mathbb{R}^2 $, and $ 1<p<\infty $. Let $ f\in L^p(\om;\mathbb{R}^{m\times 2}) $. Then, for $ \lambda\geq\lambda_0 $, there exists a unique weak solution $ u_{\va}\in W^{1,p}(\om;\mathbb{R}^m) $ of the Dirichlet problem
\begin{align}
\left\{\begin{matrix}
\mathcal{L}_{\va}(u_{\va})=\operatorname{div}(f)&\text{in}&\om,\\
u_{\va}=0&\text{on}&\pa\om,
\end{matrix}\right.\nonumber
\end{align}
satisfying the uniform estimate
\begin{align}
\left\|\nabla u_{\va}\right\|_{L^{p}(\om)}\leq C\left\|f\right\|_{L^p(\om)},\label{ww3}
\end{align}
where $ C $ depends only on $ \mu,\omega(t),\kappa,\lambda,m,p $ and $ \om $.
\end{lem}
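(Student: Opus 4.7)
The strategy is to absorb the lower-order terms into the right-hand side so that Theorem \ref{W1p L} applies, and then to treat the three regimes $p=2$, $p>2$, $1<p<2$ successively. I rewrite the equation in the form
\[
L_{\va}(u_{\va})=\operatorname{div}(\tilde f)+\tilde F,\qquad \tilde f:=f+V(x/\va)u_{\va},\quad \tilde F:=-B(x/\va)\nabla u_{\va}-c(x/\va)u_{\va}-\lambda u_{\va}.
\]
The case $p=2$ follows directly from Theorem \ref{energy}, which supplies existence, uniqueness, and $\|\nabla u_{\va}\|_{L^2(\om)}\le C\|f\|_{L^2(\om)}$.

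For $p>2$, since $\om$ is bounded we have $f\in L^2(\om)$, so a weak solution $u_{\va}\in W_0^{1,2}$ already exists by Theorem \ref{energy}. Applying Theorem \ref{W1p L} to the rewritten equation with $q=\tfrac{2p}{p+2}\in(1,2)$ yields
\[
\|\nabla u_{\va}\|_{L^p(\om)}\le C\Bigl(\|f\|_{L^p(\om)}+\kappa\|u_{\va}\|_{L^p(\om)}+\kappa\|\nabla u_{\va}\|_{L^q(\om)}+(\kappa+\lambda)\|u_{\va}\|_{L^q(\om)}\Bigr).
\]
I absorb $\|u_{\va}\|_{L^p(\om)}$ into the left-hand side via Lemma \ref{interpolation}, at the price of a term $C\|u_{\va}\|_{L^2(\om)}$, by choosing $\delta$ small enough that $C\kappa\delta\le\tfrac12$. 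Since $q<2$ and $\om$ is bounded, H\"older's inequality controls both $L^q$ terms by $C\|u_{\va}\|_{W^{1,2}(\om)}$, which in turn is bounded by $C\|f\|_{L^2(\om)}\le C\|f\|_{L^p(\om)}$ through Theorem \ref{energy}. Combining these bounds closes the argument for $p>2$.

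For $1<p<2$, I use duality. The adjoint operator takes the form
\[
\mathcal{L}_{\va}^{*}=-\operatorname{div}(A^{T}\nabla+B^{T})+V^{T}\cdot\nabla+c^{T}+\lambda I,
\]
and satisfies exactly the same structural hypotheses as $\mathcal{L}_{\va}$ (ellipticity and VMO of $A^{T}$, periodicity of all coefficients, boundedness of $V^{T},B^{T},c^{T}$), so the case $p'>2$ just proven applies to $\mathcal{L}_{\va}^{*}$. I approximate $f$ by $f_{n}\in L^{2}\cap L^{p}$ and let $u_{n}\in W_{0}^{1,2}$ be the corresponding energy solution. Given any $\Psi\in C_{c}^{\infty}(\om;\mathbb{R}^{m\times 2})$, I solve $\mathcal{L}_{\va}^{*}(w)=\operatorname{div}(\Psi)$ with $w=0$ on $\pa\om$; the $p'>2$ case furnishes $\|\nabla w\|_{L^{p'}(\om)}\le C\|\Psi\|_{L^{p'}(\om)}$. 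Pairing the equations for $u_n$ and $w$ in their weak formulations yields the identity
\[
\int_{\om}\Psi\cdot\nabla u_{n}=\int_{\om}f_{n}\cdot\nabla w,
\]
and taking the supremum over $\Psi$ with $\|\Psi\|_{L^{p'}(\om)}\le 1$ gives $\|\nabla u_{n}\|_{L^{p}(\om)}\le C\|f_{n}\|_{L^{p}(\om)}$. Passing to the limit produces a solution $u_{\va}\in W_{0}^{1,p}(\om;\mathbb{R}^m)$ with the desired bound; uniqueness then follows by applying the estimate to the difference of two solutions with $f\equiv 0$.

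The main obstacle I expect lies in the $p>2$ bootstrap: one must check that after absorbing the $L^p$ term and estimating the two $L^q$ terms through the $W^{1,2}$ energy norm, the constants genuinely close without recirculating $\|\nabla u_{\va}\|_{L^p}$. The key is the choice $q=\tfrac{2p}{p+2}<2$, which is exactly what makes H\"older available for the embedding $L^2(\om)\hookrightarrow L^q(\om)$ in two dimensions. In the range $1<p<2$, a secondary subtlety is that $f\in L^p$ need not lie in $L^2$, so existence has to be produced by an approximation argument paired with the duality-based a priori bound rather than by direct energy methods.
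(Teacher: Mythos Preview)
Your proposal is correct and follows essentially the same route as the paper: the case $p=2$ by the energy estimate, the case $p>2$ by rewriting $\mathcal{L}_{\va}(u_{\va})=\operatorname{div}(f)$ as an $L_{\va}$--problem with lower-order terms on the right, applying Theorem~\ref{W1p L} with $q=\tfrac{2p}{p+2}$, absorbing $\|u_{\va}\|_{L^p}$ via Lemma~\ref{interpolation}, and closing with the $W^{1,2}$ energy bound, and the case $1<p<2$ by duality against $\mathcal{L}_{\va}^{*}$. Your write-up is in fact more careful than the paper's on two points: you spell out the duality pairing identity $\int_{\om}\Psi\cdot\nabla u_{n}=\int_{\om}f_{n}\cdot\nabla w$, and you address existence for $1<p<2$ by approximation, which the paper leaves implicit.
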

\begin{proof}
If $ p=2 $, we can derive the estimate $ \eqref{ww3} $ by the energy inequality $ \eqref{Energy estimates} $. It is easy to obtain a unique weak solution $ u_{\va}\in W_0^{1,2}(\om;\mathbb{R}^m) $ such that $ \left\|\nabla u_{\va}\right\|_{L^{2}(\om)}\leq C\left\|f\right\|_{L^2(\om)} $. For $ p>2 $, the uniqueness and the existence are trivial. Then, we only need to show the uniform estimate $ \eqref{ww3} $. Firstly, $ u_{\va} $ is the weak solution of the Dirichlet problem
\begin{align}
\left\{\begin{matrix}
L_{\va}(u_{\va})=\operatorname{div}(f+V_{\va}u_{\va})-B_{\va}\nabla u_{\va}-(c_{\va}+\lambda)u_{\va}&\text{in}&\om,\\
u_{\va}=0&\text{on}&\pa\om.
\end{matrix}\right.\nonumber
\end{align}
From $ \eqref{W^{1,p} estimates 4} $, we have
\begin{align}
\left\|\nabla u_{\va}\right\|_{L^p(\om)}\leq C\left\{\left\|f\right\|_{L^p(\om)}+\left\|u_{\va}\right\|_{L^p(\om)}+\left\|u_{\va}\right\|_{L^{\frac{2p}{p+2}}(\om)}+\left\|\nabla u_{\va}\right\|_{L^{\frac{2p}{p+2}}(\om)}\right\}.\nonumber
\end{align}
Then, in view of $ \eqref{Energy estimates} $, H\"{o}lder's inequality and $ \eqref{Little lemma 1} $, it follows that
\begin{align}
\left\|\nabla u_{\va}\right\|_{L^p(\om)}\leq C\left\{\left\|u_{\va}\right\|_{L^2(\om)}+\left\|\nabla u_{\va}\right\|_{L^2(\om)}+\left\|f\right\|_{L^p(\om)}\right\}\leq C\left\|f\right\|_{L^p(\om)}.\nonumber
\end{align}
If $ 1<p<2 $, we can derive $ \eqref{ww3} $ by the duality arguments.
\end{proof}

\begin{lem}\label{ww5}
Assume that $ A\in\operatorname{VMO}(\mathbb{R}^2) $ satisfies $ \eqref{Ellipticity} $, $ \eqref{Periodicity} $, and other coefficients of $ \mathcal{L}_{\va} $, $ V,B,c $ satisfy $ \eqref{Boundedness} $, $ \om $ is a $ C^{1,\eta} $ $ (0<\eta<1) $ bounded domain in $ \mathbb{R}^2 $, and $ 1<p<\infty $. Let $ F\in L^q(\om;\mathbb{R}^m) $, where $ q=\frac{2p}{p+2} $ if $ p>2 $, $ 1<q<\infty $ if $ p=2 $, $ q=1 $ if $ 1<p<2 $. Then, for $ \lambda\geq\lambda_0 $, there exists a unique weak solution $ u_{\va}\in W^{1,p}(\om;\mathbb{R}^m) $ of the Dirichlet problem \begin{align}
\left\{\begin{matrix}
\mathcal{L}_{\va}(u_{\va})=F&\text{in}&\om,\\
u_{\va}=0&\text{on}&\pa\om,
\end{matrix}\right.\label{fd}
\end{align}
satisfying the uniform estimate
\begin{align}
\left\|\nabla u_{\va}\right\|_{L^{p}(\om)}\leq C\left\|F\right\|_{L^q(\om)},\label{ww4}
\end{align}
where $ C $ depends only on $ \mu,\omega(t),\kappa,\lambda,m,p $ and $ \om $.
\end{lem}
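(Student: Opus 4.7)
The plan is to treat the three exponent ranges $p=2$, $p>2$, and $1<p<2$ separately, paralleling the structure of Lemma~\ref{ww2} but with the source $F$ in place of $\operatorname{div}(f)$. For $p=2$ and arbitrary $q>1$, the crucial dimension-two input is the Sobolev embedding $W_{0}^{1,2}(\om)\hookrightarrow L^{q'}(\om)$ for every $q'<\infty$, whose dual form reads $\|F\|_{H^{-1}(\om)}\leq C_q\|F\|_{L^q(\om)}$. Existence, uniqueness, and $\|\nabla u_\va\|_{L^2(\om)}\leq C\|F\|_{L^q(\om)}$ then follow immediately from Theorem~\ref{energy}. Note also that in two dimensions the $W^{1,2}$ solution automatically lies in every $L^s(\om)$, $s<\infty$, a fact that will be used repeatedly below.

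For $p>2$ with $q=\tfrac{2p}{p+2}$, I would rewrite \eqref{fd} as
\[
L_\va(u_\va)=\operatorname{div}(V_\va u_\va)-B_\va\nabla u_\va-(c_\va+\lambda I)u_\va+F,
\]
and apply Theorem~\ref{W1p L} to the operator without lower-order terms. The divergence datum $V_\va u_\va$ lies in $L^p(\om)$ (since $u_\va\in L^p(\om)$ by the preceding remark), and the remaining source lies in $L^q(\om)$ because $q<2$ forces $L^2(\om)\subset L^q(\om)$, so that $\nabla u_\va,u_\va\in L^q(\om)$ and $F\in L^q(\om)$ by hypothesis. This yields
\[
\|\nabla u_\va\|_{L^p(\om)}\leq C\bigl\{\|u_\va\|_{L^p(\om)}+\|u_\va\|_{W^{1,q}(\om)}+\|F\|_{L^q(\om)}\bigr\}.
\]
The interpolation inequality \eqref{Little lemma 1} absorbs $\|u_\va\|_{L^p(\om)}$ into $\delta\|\nabla u_\va\|_{L^p(\om)}+C_\delta\|u_\va\|_{L^2(\om)}$, while $\|u_\va\|_{W^{1,q}(\om)}+\|u_\va\|_{L^2(\om)}\leq C\|F\|_{L^q(\om)}$ by the $p=2$ case already established. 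Choosing $\delta$ small enough and moving the $\nabla u_\va$ term to the left produces \eqref{ww4}.

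For $1<p<2$ (so $q=1$), I would argue by duality against the adjoint $\mathcal{L}_\va^{*}$, which obeys the same structural hypotheses as $\mathcal{L}_\va$ with the roles of $V$ and $B$ interchanged up to transpose. Given $g\in L^{p'}(\om;\mathbb{R}^{m\times 2})$ with $\|g\|_{L^{p'}(\om)}\leq 1$, Lemma~\ref{ww2} applied to $\mathcal{L}_\va^{*}$ with exponent $p'>2$ produces a unique $\phi\in W_{0}^{1,p'}(\om;\mathbb{R}^m)$ solving $\mathcal{L}_\va^{*}(\phi)=-\operatorname{div}(g)$ with $\|\nabla\phi\|_{L^{p'}(\om)}\leq C$; since $p'>2=d$, Morrey's embedding then gives $\|\phi\|_{L^{\infty}(\om)}\leq C$. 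Pairing the two equations and integrating by parts yields $\int_\om g\cdot\nabla u_\va\,dx=\int_\om F\cdot\phi\,dx$, whence $\bigl|\int_\om g\cdot\nabla u_\va\bigr|\leq C\|F\|_{L^1(\om)}$, and taking the supremum over $g$ gives \eqref{ww4}. Existence in $W_{0}^{1,p}(\om;\mathbb{R}^m)$ follows by approximating $F\in L^1(\om)$ by smooth data $F_k\in C_c^{\infty}(\om)$, using Theorem~\ref{energy} to obtain $u_{\va,k}\in W_{0}^{1,2}(\om)\subset W_{0}^{1,p}(\om)$, and observing that $\{u_{\va,k}\}$ is Cauchy in $W_{0}^{1,p}(\om;\mathbb{R}^m)$ by the dual estimate.

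The main technical obstacle lies in the $p>2$ iteration: one must carefully arrange the lower-order terms after reducing from $\mathcal{L}_\va$ to $L_\va$ so that the borderline dimension-two embedding $L^q(\om)\hookrightarrow H^{-1}(\om)$ for $q>1$ --- which powers the $p=2$ base case --- propagates through the bootstrap, and then verify that the interpolation \eqref{Little lemma 1} is strong enough to absorb $\|u_\va\|_{L^p(\om)}$ with a constant uniform in $\va$. This borderline embedding has no exact counterpart in $d\geq 3$, so the whole scheme is genuinely two-dimensional and the value $q=\tfrac{2p}{p+2}$ cannot be replaced by the simpler three-dimensional expression $\tfrac{pd}{p+d}$.
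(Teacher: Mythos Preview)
Your argument is correct, but it takes a different route from the paper for the cases $p\geq 2$. The paper proves \eqref{ww4} by a \emph{single} duality argument covering all three ranges at once: for arbitrary $f\in C_0^1(\om;\mathbb{R}^{m\times 2})$ it solves $\mathcal{L}_\va^*(v_\va)=\operatorname{div}(f)$, applies Lemma~\ref{ww2} to the adjoint to get $\|\nabla v_\va\|_{L^{p'}(\om)}\leq C\|f\|_{L^{p'}(\om)}$, pairs to obtain $\int_\om\nabla u_\va\cdot f=-\int_\om F\cdot v_\va$, and then invokes the appropriate two-dimensional Sobolev embedding for $v_\va$ in each case (namely $W_0^{1,p'}\hookrightarrow L^{2p/(p-2)}$ when $p>2$, $W_0^{1,2}\hookrightarrow L^{q'}$ when $p=2$, and $W_0^{1,p'}\hookrightarrow L^\infty$ when $p<2$). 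In contrast, you handle $p=2$ by the energy estimate directly, $p>2$ by the same bootstrap-and-absorb scheme used in Lemma~\ref{ww2} (rewriting $\mathcal{L}_\va$ as $L_\va$ plus lower-order terms and applying Theorem~\ref{W1p L}), and reserve duality for $1<p<2$ only---where your argument essentially coincides with the paper's. Your approach is perfectly valid and perhaps more self-contained for $p\geq 2$, but the paper's uniform duality is shorter and avoids the interpolation step \eqref{Little lemma 1} altogether; it also makes transparent that the only dimension-two input needed is the Sobolev embedding for the \emph{dual} problem.
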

\begin{proof}
After getting the estimate $ \eqref{ww4} $, the uniqueness of $ \eqref{fd} $ is trivial to establish. Therefore, we only need to show the existence of the solution. In fact, given $ 1<p<\infty $, and $ q $ defined above, we can first assume that $ F\in L^q(\om;\mathbb{R}^m)\cap L^2(\om;\mathbb{R}^m) $ and use Theorem \ref{energy} to establish the existence of $ \eqref{fd} $. Then, by using the standard density arguments, we can prove that the result is valid for all $ F\in L^q(\om;\mathbb{R}^m) $ with the help of the a priori estimate. Now, we will prove the estimate $ \eqref{ww4} $, where, we use the duality arguments and Lemma \ref{ww2}. For any $ f\in C_0^1(\om;\mathbb{R}^{m\times 2}) $, there exists a unique $ v_{\va}\in W_0^{1,2}(\om;\mathbb{R}^m) $ such that $ \mathcal{L}_{\va}^*(v_{\va})=\operatorname{div}(f) $ in $ \om $, and $ v_{\va}=0 $ on $ \pa\om $. According to $ \eqref{ww3} $, we have $ \left\|\nabla v_{\va}\right\|_{L^{p'}(\om)}\leq C\left\|f\right\|_{L^{p'}(\om)} $, where $ p'=\frac{p}{p-1} $ denoted as the conjugate number of $ p $. Then, by using the definition of $ u_{\va} $ and $ v_{\va} $, we can obtain
\begin{align}
\int_{\om}\nabla u_{\va}fdx=-\int_{\om}\mathcal{L}_{\va}(u_{\va})v_{\va}dx=-\int_{\om}Fv_{\va}dx.\nonumber
\end{align}
(1) If $ 2<p<\infty $, we choose $ q=\frac{2p}{p+2} $. Using H\"{o}lder's inequality, Poincar\'{e}-Sobolev inequality and Sobolev embedding theorem, we obtain
\begin{align}
\left|\int_{\om}\nabla u_{\va}fdx\right|&\leq  \left\|F\right\|_{L^{\frac{2p}{p+2}}(\om)}\left\|v_{\va}\right\|_{L^{\frac{2p}{p-2}}(\om)}\leq C\left\|F\right\|_{L^{\frac{2p}{p+2}}(\om)}\left\|\nabla v_{\va}\right\|_{L^{\frac{p}{p-1}}(\om)}\leq C \left\|F\right\|_{L^{\frac{2p}{p+2}}(\om)}\left\|f\right\|_{L^{\frac{p}{p-1}}(\om)}.\nonumber
\end{align}
Then,   using the duality methods, we can obtain that $ \left\|\nabla u_{\va}\right\|_{L^{p}(\om)}\leq C\left\|F\right\|_{L^{\frac{2p}{p+2}}(\om)} $.\\
(2) If $ p=2 $, choose $ 1<q<\infty $, then, we have
\begin{align}
\left|\int_{\om}\nabla u_{\va}fdx\right|&\leq  \left\|F\right\|_{L^{q}(\om)}\left\|v_{\va}\right\|_{L^{\frac{q}{q-1}}(\om)}\leq C\left\|F\right\|_{L^{q}(\om)}\left\|\nabla v_{\va}\right\|_{L^{2}(\om)}\leq C \left\|F\right\|_{L^{q}(\om)}\left\|f\right\|_{L^{2}(\om)}.\nonumber
\end{align}
Then, we can obtain that $ \left\|\nabla u_{\va}\right\|_{L^{2}(\om)}\leq C\left\|F\right\|_{L^{q}(\om)} $.\\
(3) If $ 1<p<2 $, choose $ q=1 $, then, we have
\begin{align}
\left|\int_{\om}\nabla u_{\va}fdx\right|&\leq  \left\|F\right\|_{L^1(\om)}\left\|v_{\va}\right\|_{L^{\infty}(\om)}\leq C\left\|F\right\|_{L^1(\om)}\left\|\nabla v_{\va}\right\|_{L^{\frac{p}{p-1}}(\om)}\leq C \left\|F\right\|_{L^{1}(\om)}\left\|f\right\|_{L^{\frac{p}{p-1}}(\om)}.\nonumber
\end{align}
  Because of   cases (1), (2) and (3), we can complete the proof.
\end{proof}

\begin{rem}
Furthermore, when $ p>2 $ and $ F\in \dot{W}^{-1,p'}(\om) $ $($where $ \dot{W}^{-1,p'}(\om) $ denotes the dual space for the homogeneous Sobolev space $ \dot{W}_0^{1,p}(\om) $$)$, we have
\begin{align}
\left\|u_{\va}\right\|_{L^{q'}(\om)}\leq C\left\|F\right\|_{\dot{W}^{-1,p'}(\om)}. \label{Dual estimates}
\end{align}
The conclusion is proved by using the dual method. For all $ f\in C_0^1(\om;\mathbb{R}^m) $ we can choose $ v_{\va} $ such that $ \mathcal{L}_{\va}^*(v_{\va})=f $ in $ \om $, and $ v_{\va}=0 $ on $ \pa\om $, and
\begin{align}
\int_{\om}u_{\va}fdx=\int_{\om}u_{\va}\mathcal{L}_{\va}^*(v_{\va})=\int_{\om}\mathcal{L}_{\va}(u_{\va})v_{\va}dx=\langle F,v_{\va}\rangle. \nonumber
\end{align}
This implies that
\begin{align}
\left|\int_{\om}u_{\va}fdx\right|=\left|\langle F,v_{\va}\rangle\right|\leq \left\|F\right\|_{\dot{W}^{-1,p'}(\om)}\left\|v_{\va}\right\|_{\dot{W}_0^{1,p}(\om)}\leq\left\|F\right\|_{\dot{W}^{-1,p'}(\om)}\left\|f\right\|_{L^q(\om)}.\nonumber
\end{align}
According to the duality property, we have $ \left\|u_{\va}\right\|_{L^{q'}(\om)}\leq C\left\|F\right\|_{\dot{W}^{-1,p'}(\om)} $.
\end{rem}

\begin{proof}[Proof of Theorem \ref{W1p}]
In the case of $ g=0 $, we write $ v_{\va}=u_{\va,1}+u_{\va,2} $, where $ u_{\va,1} $ and $ u_{\va,2} $ are the solution in Lemma \ref{ww2} and \ref{ww5}, respectively. Then we have
\begin{align}
\left\|\nabla v_{\va}\right\|_{L^p(\om)}\leq \left\|\nabla u_{\va,1}\right\|_{L^p(\om)}+\left\|\nabla u_{\va,1}\right\|_{L^p(\om)}\leq C\left\{\left\|f\right\|_{L^p(\om)}+\left\|F\right\|_{L^q(\om)}\right\}.\label{ww6}
\end{align}

For $ g\neq 0 $, consider the homogeneous Dirichlet problem $ \mathcal{L}_{\va}(w_{\va})=0 $ in $ \om $ and $ w_{\va}=g $ on $ \pa\om $, where $ g\in B^{1-\frac{1}{p},p}(\pa\om;\mathbb{R}^m) $. By the properties of boundary Besov space, there exists $ G\in W^{1,p}(\om;\mathbb{R}^m) $ such that $ G=g $ on $ \pa\om $ and $ \left\|G\right\|_{W^{1,p}(\om)}\leq C\left\|g\right\|_{B^{1-\frac{1}{p},p}(\pa\om)} $. For $ h_{\va}=w_{\va}-G $, we have
\begin{align}
\left\{\begin{matrix}
\mathcal{L}_{\va}(h_{\va})=\operatorname{div}(A(x/\va)\nabla G+V(x/\va)G)-B(x/\va)\nabla G-(c(x/\va)+\lambda)G&\text{in}&\om,\\
h_{\va}=0&\text{on}&\pa\om.
\end{matrix}\right.\nonumber
\end{align}
Recall the case of $ g=0 $, in which there exists the unique weak solution $ h_{\va}\in W_0^{1,p}(\om;\mathbb{R}^m) $, satisfying the uniform estimate
\begin{align}
\left\|\nabla h_{\va}\right\|_{L^p(\om)}\leq C\left\|G\right\|_{W^{1,p}(\om)}+C\left\|\nabla G\right\|_{L^q(\om)}+C\left\|G\right\|_{L^q(\om)}.\nonumber
\end{align}
For $ p>2 $, choose $ q=\frac{2p}{p+2}<p $, for $ p=2 $, choose $ q<2=p $ and for $ p<2 $, choose $ q=1<p $, then
\begin{align}
\left\|\nabla h_{\va}\right\|_{L^p(\om)}&\leq C\left\|G\right\|_{W^{1,p}(\om)}+C\left\|\nabla G\right\|_{L^q(\om)}+C\left\|G\right\|_{L^q(\om)}
\leq C\left\|G\right\|_{W^{1,p}(\om)}\leq C\left\|g\right\|_{B^{1-\frac{1}{p},p}(\pa\om)},\nonumber
\end{align}
for any $ 1<p<\infty $. This implies
\begin{align}
\left\|\nabla w_{\va}\right\|_{L^p(\om)}\leq\left\|\nabla h_{\va}\right\|_{L^p(\om)}+C\left\|\nabla G\right\|_{L^p(\om)}\leq C\left\|g\right\|_{B^{1-\frac{1}{p},p}(\pa\om)}.\label{ww7}
\end{align}
Finally, let $ u_{\va}=v_{\va}+w_{\va} $. Combining $ \eqref{ww6} $ and $ \eqref{ww7} $, we can complete the proof.
\end{proof}

\begin{thm}[The Localization of $ W^{1,p} $ estimates for $ \mathcal{L}_{\va} $ with $ d=2 $]Let $ 2\leq p<\infty $. Assume that $ A\in\operatorname{VMO}(\mathbb{R}^2) $ satisfies $ \eqref{Ellipticity} $, $ \eqref{Periodicity} $, other coefficients of $ \mathcal{L}_{\va} $, $ V,B,c $ satisfy $ \eqref{Boundedness} $, $ \lambda\geq\lambda_0 $, $ \om $ is a $ C^{1,\eta} $ $ (0<\eta<1) $ bounded domain in $ \mathbb{R}^2 $, $ x_0\in\om $ and $ 0<r<\operatorname{diam}(\om) $. If $ \pa\om\cap B(x_0,2r)\neq\emptyset $, assume that $ u_{\va}\in W^{1,2}(\om(x_0,2r);\mathbb{R}^m) $ is the weak solution to
\begin{align}
\left\{\begin{matrix}
\mathcal{L}_{\va}(u_{\va})=\operatorname{div}(f)+F&\text{in}&\om(x_0,2r),\\
u_{\va}=0&\text{on}&\Delta(x_0,2r),
\end{matrix}\right.\nonumber
\end{align}
with $ f\in L^p(\om(x_0,2r);\mathbb{R}^{m\times 2}) $, $ F\in L^q(\om(x_0,2r);\mathbb{R}^m) $, $ q=\frac{2p}{p+2} $ if $ 2<p<\infty $ and $ q>1 $ if $ p=2 $. If $ \pa\om\cap B(x_0,2r)=\emptyset  $, assume that $ u_{\va}\in W^{1,2}(B(x_0,2r);\mathbb{R}^m) $ is a weak solution to
\begin{align}
\mathcal{L}_{\va}(u_{\va})=\operatorname{div}(f)+F\quad\text{in}\quad B(x_0,2r),\nonumber
\end{align}
with the same data $ f $, $ F $, $ p $ and $ q $. Then $ \nabla u_{\va}\in L^p(\om(x_0,r);\mathbb{R}^m) $ and
\begin{align}
\left\|\nabla u_{\va}\right\|_{L_{\avg}^p(\om_{tr})}\leq\frac{C}{(s-t)^2tr}\left\|u_{\va}\right\|_{L_{\avg}^2(\om_{sr})}+\frac{C}{(s-t)t^{\frac{2}{p}}}\left\{\left\|f\right\|_{L_{\avg}^p(\om_{sr})}+r\left\|F\right\|_{L_{\avg}^q(\om_{sr})}\right\},\label{W^{1,p} interior estimates}
\end{align}
where $ 0<t<s<1 $, $ \om_{tr}=\om(x_0,tr) $ and $ C $ depends only on $ \mu,\omega(t),\kappa,\lambda,m,p,q,\om $.
\end{thm}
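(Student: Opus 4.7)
The plan is to apply the global $ W^{1,p} $ bound of Theorem~\ref{W1p} to a cutoff $ w=\varphi u_{\va} $ and then clean up the commutator terms that arise using Caccioppoli's inequality \eqref{28gj} together with the interpolation estimate \eqref{interp}.

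Fix an intermediate radius $ \sigma=(s+t)/2 $ and choose $ \varphi\in C_{0}^{\infty}(B(x_{0},\sigma r)) $ with $ \varphi\equiv 1 $ on $ B(x_{0},tr) $ and $ |\nabla\varphi|\le C[(s-t)r]^{-1} $. The Dirichlet condition $ u_{\va}=0 $ on $ \Delta(x_{0},2r) $ (in the boundary case) together with the support of $ \varphi $ make $ w=\varphi u_{\va} $ an element of $ W_{0}^{1,p}(\om;\mathbb{R}^{m}) $ in the boundary case and of $ W_{0}^{1,p}(B(x_{0},2r);\mathbb{R}^{m}) $ in the interior case. A direct expansion of $ \mathcal{L}_{\va}(\varphi u_{\va}) $ shows that $ w $ solves $ \mathcal{L}_{\va}(w)=\operatorname{div}(\tilde{f})+\tilde{F} $ with
$$
\tilde{f}=\varphi f-A(\cdot/\va)(u_{\va}\otimes\nabla\varphi),\qquad \tilde{F}=\varphi F-f\cdot\nabla\varphi-(A(\cdot/\va)\nabla u_{\va})\cdot\nabla\varphi+(\text{order-zero commutators}).
$$
Applying Theorem~\ref{W1p} to $ w $ and restricting to $ \om_{tr} $, where $ w\equiv u_{\va} $, yields after an elementary bookkeeping of the commutators
$$
\|\nabla u_{\va}\|_{L^{p}(\om_{tr})}\le C\|f\|_{L^{p}(\om_{\sigma r})}+C\|F\|_{L^{q}(\om_{\sigma r})}+\frac{C}{(s-t)r}\bigl[\|u_{\va}\|_{L^{p}(\om_{\sigma r})}+\|\nabla u_{\va}\|_{L^{q}(\om_{\sigma r})}+\|u_{\va}\|_{L^{q}(\om_{\sigma r})}+\|f\|_{L^{q}(\om_{\sigma r})}\bigr].
$$

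The substantive step is to discharge the gradient commutator $ \|\nabla u_{\va}\|_{L^{q}(\om_{\sigma r})} $. Since $ p\ge 2 $ forces $ q\le 2 $, Hölder's inequality gives $ \|\nabla u_{\va}\|_{L^{q}(\om_{\sigma r})}\le|\om_{\sigma r}|^{1/q-1/2}\|\nabla u_{\va}\|_{L^{2}(\om_{\sigma r})} $, and Caccioppoli's inequality \eqref{28gj} applied between the radii $ \sigma r $ and $ sr $ replaces the $ L^{2} $ gradient norm by $ [(s-t)r]^{-1}\|u_{\va}\|_{L^{2}(\om_{sr})} $ plus $ L^{2} $ and $ L^{q} $ norms of $ f $ and $ F $ on $ \om_{sr} $, the $ L^{2} $ norm of $ f $ being in turn dominated by $ \|f\|_{L^{p}(\om_{sr})} $ via Hölder. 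The remaining term $ \|u_{\va}\|_{L^{p}(\om_{\sigma r})} $ is dispatched by \eqref{interp}, which trades it for another $ \|\nabla u_{\va}\|_{L^{q}} $ (handled exactly as above) and a scale-weighted $ \|u_{\va}\|_{L^{2}(\om_{sr})} $. Conversion to averaged norms via $ |\om_{tr}|\asymp(tr)^{2} $ and $ |\om_{sr}|\asymp(sr)^{2} $, using the identity $ 2(1/q-1/2)=2/p $ for $ q=2p/(p+2) $, produces precisely the prefactors $ (s-t)^{-2}(tr)^{-1} $ and $ (s-t)^{-1}t^{-2/p} $ claimed in \eqref{W^{1,p} interior estimates}.

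The main technical obstacle is exactly this commutator term $ A(\cdot/\va)\nabla u_{\va}\cdot\nabla\varphi $. A direct Hölder embedding $ L^{q}\hookrightarrow L^{p} $ on the same set $ \om_{\sigma r} $ would leave a coefficient of order $ C(s-t)^{-1}|\om_{\sigma r}|^{1/q-1/p}\asymp C(s-t)^{-1} $ in front of $ \|\nabla u_{\va}\|_{L^{p}(\om_{\sigma r})} $, which is not strictly less than one, so the usual Giaquinta-type absorption lemma would fail. Routing instead through an $ L^{2} $ gradient bound and invoking Caccioppoli on the strictly larger set $ \om_{sr} $---which is precisely the reason for introducing the intermediate radius $ \sigma $---bypasses any self-improvement iteration and cleanly accounts for the two inverse powers of $ (s-t) $ in the final estimate.
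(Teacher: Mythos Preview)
Your proposal is correct and follows essentially the same route as the paper: localize with a cutoff supported at the intermediate radius $(t+s)/2$, apply the global $W^{1,p}$ estimate to $\varphi u_{\va}$, control $\|u_{\va}\|_{L^{p}}$ via the interpolation \eqref{interp}, and discharge the gradient commutator $\|\nabla u_{\va}\|_{L^{q}}$ by H\"older down to $L^{2}$ followed by Caccioppoli \eqref{28gj} on the outer annulus. The only cosmetic difference is that the paper rewrites the localized equation as an $L_{\va}$-equation (pushing all lower-order terms to the right) and invokes Theorem~\ref{W1p L}, whereas you keep the $\mathcal{L}_{\va}$-structure and invoke Theorem~\ref{W1p}; the bookkeeping and the resulting powers of $(s-t)$ and $t$ are identical.
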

\begin{proof}
If $ p=2 $, we can derive this inequality by Caccioppoli's inequality $ \eqref{Caccioppoli inequality} $. If $ p>2 $, by using rescaling and translation, we can assume that $ x_0=0 $ and $ r=1 $. We can also assume that $ \pa\om\cap B(x_0,2r)\neq\emptyset $, since the other case is almost the same. For $ 0<t<s<1 $, we see that $ 0<t<\frac{t+s}{2}<s<1 $. We can choose $ \varphi\in C_0^{\infty}(B(0,\frac{t+s}{2})) $ as a cut-off function such that $ \varphi\equiv 1 $ in $ B(0,t) $, $ \varphi\equiv 0 $ in $ (B(0,\frac{t+s}{2}))^c $ and $ |\nabla \varphi|\leq \frac{C}{(s-t)} $. Then by letting $ w_{\va}=\varphi u_{\va} $, we have
\begin{align}
\left\{\begin{matrix}
-\operatorname{div}(A_{\va}\nabla w_{\va})=\operatorname{div}(f\varphi)-f\nabla\varphi+F\varphi+\widetilde{F}&\text{in}&\om(0,2),\\
w_{\va}=0&\text{on}&\pa[\om(0,2)],
\end{matrix}\right.\nonumber
\end{align}
where $ A_{\va}=A(x/\va) $, $ V_{\va}=V(x/\va) $, $ B_{\va}=B(x/\va) $, $ c_{\va}=c(x/\va) $, and
\begin{align}
\widetilde{F}^{\alpha}=\operatorname{div}(V_{\va}^{\alpha\beta}w_{\va}^{\beta}-A_{\va}^{\alpha\beta}\nabla\varphi u_{\va}^{\beta})-A_{\va}^{\alpha\beta}\nabla\varphi\nabla u_{\va}^{\beta}-V_{\va}^{\alpha\beta}\nabla\varphi u_{\va}^{\beta}-B_{\va}^{\alpha\beta}\nabla u_{\va}^{\beta}\varphi-c_{\va}^{\alpha\beta}w_{\va}^{\beta}-\lambda w_{\va}^{\alpha}.\nonumber
\end{align}
Let $ \widetilde{f}=f\varphi+V_{\va}^{\alpha\beta}w_{\va}^{\beta}-A_{\va}^{\alpha\beta}\nabla\varphi u_{\va}^{\beta} $ and
\begin{align}
G=-f\nabla\varphi+F\varphi-A_{\va}^{\alpha\beta}\nabla\varphi\nabla u_{\va}^{\beta}-V_{\va}^{\alpha\beta}\nabla\varphi u_{\va}^{\beta}-B_{\va}^{\alpha\beta}\nabla u_{\va}^{\beta}\varphi-c_{\va}^{\alpha\beta}w_{\va}^{\beta}-\lambda w_{\va}^{\alpha}. \nonumber
\end{align}
Then, by using the estimate $ \eqref{W^{1,p} estimates 4} $, we can obtain
\begin{align}
\left\|\nabla w_{\va}\right\|_{L^p(\om(0,{\overline{t}}))}\leq C\left\{\|\widetilde{f}\|_{L^p(\om(0,{\overline{t}})}+\left\|G\right\|_{L^\frac{2p}{p+2}(\om(0,{\overline{t}})}\right\},\nonumber
\end{align}
where we define $ \overline{t}=\frac{t+s}{2} $. This implies that
\begin{align}
\left\|\nabla u_{\va}\right\|_{L^p(\om(0,t))}&\leq  C\left\|\varphi u_{\va}\right\|_{L^p(\om(0,\overline{t}))}+C\left\|\nabla \varphi u_{\va}\right\|_{L^p(\om(0,\overline{t}))}+C\left\|\nabla \varphi\nabla u_{\va}\right\|_{L^{\frac{2p}{p+2}}(\om(0,\overline{t}))}\nonumber\\
&\quad+C\left\|\nabla \varphi u_{\va}\right\|_{L^{\frac{2p}{p+2}}(\om(0,\overline{t}))}+C\left\|\varphi u_{\va}\right\|_{L^{\frac{2p}{p+2}}(\om(0,\overline{t}))}+C\left\|f\nabla \varphi\right\|_{L^{\frac{2p}{p+2}}(\om(0,\overline{t}))}\nonumber\\
&\quad+C\left\|\nabla u_{\va}\varphi\right\|_{L^{\frac{2p}{p+2}}(\om(0,\overline{t}))}+C\left\|f \varphi\right\|_{L^p(\om(0,\overline{t}))}+C\left\|F\varphi\right\|_{L^{\frac{2p}{p+2}}(\om(0,\overline{t}))}.\nonumber
\end{align}
Using the definition of the cut-off function $ \varphi $, we have
\begin{align}
\left\|\nabla u_{\va}\right\|_{L^p(\om(0,t))}&\leq C\left\|u_{\va}\right\|_{L^p(\om(0,\overline{t}))}+\frac{C}{(s-t)}\left\|u_{\va}\right\|_{L^p(\om(0,\overline{t}))}+\frac{C}{(s-t)}\left\|\nabla u_{\va}\right\|_{L^{\frac{2p}{p+2}}(\om(0,\overline{t}))}\nonumber\\
&\quad+\frac{C}{(s-t)}\left\|u_{\va}\right\|_{L^{\frac{2p}{p+2}}(\om(0,\overline{t}))}+C\left\|\nabla u_{\va}\right\|_{L^{\frac{2p}{p+2}}(\om(0,\overline{t}))}+C\left\| u_{\va}\right\|_{L^{\frac{2p}{p+2}}(\om(0,\overline{t}))}\nonumber\\
&\quad+\frac{C}{(s-t)}\left\|f\right\|_{L^{\frac{2p}{p+2}}(\om(0,\overline{t}))}+C\left\|f\right\|_{L^p(\om(0,\overline{t}))}+C\left\|F\right\|_{L^{\frac{2p}{p+2}}(\om(0,\overline{t}))}\nonumber\\
&\leq  \frac{C}{(s-t)}\left\{\left\|u_{\va}\right\|_{L^p(\om(0,\overline{t}))}+\left\|\nabla u_{\va}\right\|_{L^{\frac{2p}{p+2}}(\om(0,\overline{t}))}+\left\|f\right\|_{L^{p}(\om(0,\overline{t}))}\right\}+C\left\|F\right\|_{L^{\frac{2p}{p+2}}(\om(0,\overline{t}))}.\nonumber
\end{align}
This, together with $ \eqref{interp} $ for $ p>2 $, $ q=\frac{2p}{p+2} $ and $ d=2 $, i.e. 
\begin{align}
\left\|u_{\va}\right\|_{L^p(\om(0,\overline{t}))}\leq C\left\|\nabla u_{\va}\right\|_{L^{\frac{2p}{p+2}}(\om(0,\overline{t}))}+C\overline{t}^{\frac{2}{p}-1}\left\|u_{\va}\right\|_{L^2(\om(0,\overline{t}))},\nonumber
\end{align}
gives that
\begin{align}
&\left\|\nabla u_{\va}\right\|_{L^p(\om(0,t))}\nonumber\\
&\quad\quad\leq \frac{C}{(s-t)}\left\|\nabla u_{\va}\right\|_{L^{\frac{2p}{p+2}}(\om(0,\overline{t}))}+\frac{C}{(s-t)(t+s)^{1-\frac{2}{p}}}\left\|u_{\va}\right\|_{L^2(\om(0,\overline{t}))}\nonumber\\
&\quad\quad\quad\quad+\frac{C}{(s-t)}\left\|f\right\|_{L^{p}(\om(0,\overline{t}))}+\left\|F\right\|_{L^{\frac{2p}{p+2}}(\om(0,\overline{t}))}\nonumber\\
&\quad\quad\leq \frac{C\overline{t}^{\frac{2}{p}}}{(s-t)}\left\|\nabla u_{\va}\right\|_{L^{2}(\om(0,\overline{t}))}+\frac{C}{(s-t)(t+s)^{1-\frac{2}{p}}}\left\|u_{\va}\right\|_{L^2(\om(0,\overline{t}))}
\nonumber\\
&\quad\quad\quad\quad+\frac{C}{(s-t)}\left\|f\right\|_{L^{p}(\om(0,\overline{t}))}+\left\|F\right\|_{L^{\frac{2p}{p+2}}(\om(0,\overline{t}))}.\label{ww8}
\end{align}
For the second inequality of $\eqref{ww8} $, we have used the H\"{o}lder's inequality. Finally we can use $ \eqref{28gj} $ to derive
\begin{align}
\left\|\nabla u_{\va}\right\|_{L_{\avg}^{2}(\om(0,\overline{t}))}\leq \frac{C}{(s-t)}\left\|u_{\va}\right\|_{L_{\avg}^2(\om(0,s))}+C\left\|f\right\|_{L_{\avg}^2(\om(0,s))}+C\left\|F\right\|_{L_{\avg}^{\frac{2p}{p+2}}(\om(0,s))},\nonumber
\end{align}
i.e.
\begin{align}
\left\|\nabla u_{\va}\right\|_{L^{2}(\om(0,\overline{t}))}\leq \frac{C}{(s-t)}\left\|u_{\va}\right\|_{L^2(\om(0,s))}+C\left\|f\right\|_{L^2(\om(0,s))}+\frac{C}{s^{\frac{2}{p}}}\left\|F\right\|_{L^{\frac{2p}{p+2}}(\om(0,s))}.\label{ww9}
\end{align}
Combining inequalities $ \eqref{ww8} $, $ \eqref{ww9} $ and the fact that $ \overline{t}/s\leq 1 $, we can obtain
\begin{align}
\left\|\nabla u_{\va}\right\|_{L^p(\om(0,t))}\leq\frac{C}{(s-t)^2t^{1-\frac{2}{p}}}\left\|u_{\va}\right\|_{L^2(\om(0,s))}+\frac{C}{(s-t)}\left\|f\right\|_{L^{p}(\om(0,s))}+\frac{C}{(s-t)}\left\|F\right\|_{L^{\frac{2p}{p+2}}(\om(0,s))},\nonumber
\end{align}
where $ C $ depends only on $ \mu,\omega(t),\kappa,\lambda,m,p,q $. Taking the average and using $ t,s<1 $, we have
\begin{align}
\left\|\nabla u_{\va}\right\|_{L_{\avg}^p(\om(0,t))}\leq \frac{C}{(s-t)^2t}\left\|u_{\va}\right\|_{L_{\avg}^2(\om(0,s))}+\frac{C}{(s-t)t^{\frac{2}{p}}}\left\{\left\|f\right\|_{L_{\avg}^p(\om(0,s))}+\left\|F\right\|_{L_{\avg}^{\frac{2p}{p+2}}(\om(0,s))}\right\}.\nonumber
\end{align}
This completes the proof.
\end{proof}
%==============================================================
\begin{cor}
Let $ 2<p<\infty $. Assume that $ A\in\operatorname{VMO}(\mathbb{R}^2) $ satisfies $ \eqref{Ellipticity} $, $ \eqref{Periodicity} $, other coefficients of $ \mathcal{L}_{\va} $ satisfy $ \eqref{Boundedness} $, $ \lambda\geq\lambda_0 $, $ \om $ is a $ C^{1,\eta} $ $ (0<\eta<1) $ bounded domain in $ \mathbb{R}^2 $, $ x_0\in\om $ and $ 0<r<\operatorname{diam}(\om) $. If $ \pa\om\cap B(x_0,2r)\neq\emptyset $, assume that $ u_{\va}\in W^{1,2}(\om(x_0,2r);\mathbb{R}^m) $ is the weak solution to
\begin{align}
\left\{\begin{matrix}
\mathcal{L}_{\va}(u_{\va})=\operatorname{div}(f)+F&\text{in}&\om(x_0,2r),\\
u_{\va}=0&\text{on}&\Delta(x_0,2r),
\end{matrix}\right.\nonumber
\end{align}
with $ f\in L^p(\om(x_0,2r);\mathbb{R}^{m\times 2}) $, $ F\in L^{q}(\om(x_0,2r);\mathbb{R}^m) $ and $ q=\frac{2p}{p+2} $. If $ \pa\om\cap B(x_0,2r)=\emptyset  $, assume that $ u_{\va}\in W^{1,2}(B(x_0,2r);\mathbb{R}^m) $ is a weak solution to
\begin{align}
\mathcal{L}_{\va}(u_{\va})=\operatorname{div}(f)+F\quad\text{in}\quad\om(x_0,2r),\nonumber
\end{align}
with the same data $ f $, $ F $ and $ p $. Then, for $ \sigma=1-\frac{2}{p} $,
\begin{align}
[u_{\va}]_{C^{0,\sigma}(\om_r)}\leq Cr^{-\sigma}\left\{\left(\dashint_{\om_{2r}}|u_{\va}|^2dx\right)^{\frac{1}{2}}+r\left(\dashint_{\om_{2r}}|f|^pdx\right)^{\frac{1}{p}}
+r^2\left(\dashint_{\om_{2r}}|F|^qdx\right)^{\frac{1}{q}}\right\}.\label{Holder interior estimates}
\end{align}
In particular, for all $ \overline{p}>0 $, we have
\begin{align}
\left\|u_{\va}\right\|_{L^{\infty}(\om_{r})}\leq C\left\{\left(\dashint_{\om_{2r}}|u_{\va}|^{\overline{p}}dx\right)^{\frac{1}{\overline{p}}}+r\left(\dashint_{\om_{2r}}|f|^pdx\right)^{\frac{1}{p}}+r^2\left(\dashint_{\om_{2r}}|F|^qdx\right)^{\frac{1}{q}}\right\},\label{infty interior estimates}
\end{align}
where $ \om_r=\om(x_0,r) $ and $ C $ depends only on $ \mu,\omega(t),\kappa,\lambda,m,p,\overline{p},q,\om $.
\end{cor}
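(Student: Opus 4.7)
My plan is to chain together the $W^{1,p}$ interior estimate just proved, Morrey's embedding (valid since $p>2=d$), and a standard interpolation step. I treat the boundary case $\pa\om\cap B(x_0,2r)\neq\emptyset$; the interior case is identical and strictly easier.

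For the H\"older estimate, I first apply the $W^{1,p}$ interior estimate with parameters (for instance rescaling the outer radius to $2r$ and taking $t=\tfrac12$, $s=\tfrac34$, then absorbing the volume ratio $|\om_{2r}|/|\om_{3r/2}|$, which is uniformly bounded by the Lipschitz character of $\om$) to obtain
\begin{align}
\|\nabla u_\va\|_{L^p_{\avg}(\om_r)}\leq \tfrac{C}{r}\|u_\va\|_{L^2_{\avg}(\om_{2r})}+C\|f\|_{L^p_{\avg}(\om_{2r})}+Cr\|F\|_{L^q_{\avg}(\om_{2r})}.\nonumber
\end{align}
Since $p>2$, Morrey's inequality in scaled form gives $[u]_{C^{0,\sigma}(\om_r)}\leq Cr^{1-\sigma}\|\nabla u\|_{L^p_{\avg}(\om_r)}$ with $\sigma=1-2/p$ (the scaling is produced by a dilation of the unit-ball Morrey estimate; at the boundary the vanishing trace of $u_\va$ on $\Delta(x_0,2r)$ lets me extend by zero and fall back on the ball version). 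Combining the two displayed estimates and using $r^{1-\sigma}\cdot r^{-1}=r^{-\sigma}$, $r^{1-\sigma}=r^{-\sigma}\cdot r$, and $r^{1-\sigma}\cdot r=r^{-\sigma}\cdot r^2$ gives the claim with matching $r$-powers.

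For the $L^\infty$ estimate, the case $\overline{p}=2$ is immediate from the H\"older bound: for any $x\in\om_r$, averaging in $y$ over $\om_r$ yields
\begin{align}
|u_\va(x)|\le \dashint_{\om_r}|u_\va(y)|\,dy+(2r)^\sigma[u_\va]_{C^{0,\sigma}(\om_r)}\le C\|u_\va\|_{L^2_{\avg}(\om_{2r})}+r^\sigma[u_\va]_{C^{0,\sigma}(\om_r)},\nonumber
\end{align}
and substituting the H\"older estimate collapses $r^\sigma\cdot r^{-\sigma}=1$ to leave precisely the right-hand side of the claim. For $\overline{p}\ge 2$, H\"older's inequality dominates the $L^2$-average by the $L^{\overline{p}}$-average and we are finished. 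The remaining case $0<\overline{p}<2$ is handled by a standard interpolation/iteration: set $M(t)=\|u_\va\|_{L^\infty(\om_{tr})}$ for $t\in[1,2]$, apply the $\overline{p}=2$ version on $\om_{tr}$ with outer radius $sr$ for $t<s\le2$ to get $M(t)\le C(s-t)^{-N}\|u_\va\|_{L^2_{\avg}(\om_{sr})}+(\text{$f,F$-data})$, interpolate
\begin{align}
\|u_\va\|_{L^2_{\avg}(\om_{sr})}\le M(s)^{1-\overline{p}/2}\|u_\va\|_{L^{\overline{p}}_{\avg}(\om_{sr})}^{\overline{p}/2},\nonumber
\end{align}
absorb $M(s)$ as $\tfrac12 M(s)$ by Young's inequality, and conclude by the standard iteration lemma (\emph{e.g.}\ Giaquinta V.3.1) that $M(1)\le C\|u_\va\|_{L^{\overline{p}}_{\avg}(\om_{2r})}+(\text{data})$, which is the desired bound.

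The genuine obstacle is the uniformity of the Morrey constant on $\om_r$ in the basepoint $x_0$ and radius $r$; this follows because $\om\in C^{1,\eta}$ satisfies a uniform interior cone condition, and the zero-extension across $\Delta(x_0,2r)$ lets the boundary case reduce to the standard Morrey estimate on a ball.
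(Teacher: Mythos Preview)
Your argument is correct. For the H\"older seminorm estimate and the $L^\infty$ bound with $\overline{p}\ge 2$ you follow essentially the paper's route (localized $W^{1,p}$ estimate feeding into scaled Morrey, then pointwise control via oscillation plus average); your use of the zero extension across $\Delta(x_0,2r)$ to invoke the ball version of Morrey is a slight streamlining of the paper's use of the scaled Morrey inequality with the extra $L^2$ lower-order term.

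The genuine divergence is in the $0<\overline{p}<2$ step. The paper decomposes $u_\va=v_\va+w_\va$ with $\mathcal{L}_\va(v_\va)=0$ and $w_\va\in W^{1,p}_0$, reduces to showing $\|v_\va\|_{L^2_{\avg}(\om_{3/2})}\le C\|v_\va\|_{L^{\overline{p}}_{\avg}(\om_2)}$ for the homogeneous part, and proves this by the logarithmic-integration device from Shen: interpolate $L^2$ between $L^{p_1}$ and $L^{\overline{p}}$, take logarithms, substitute $t=s^b$, and integrate in $ds/s$ to beat the exponent. Your route is the more standard Giaquinta-type absorption: interpolate $\|u_\va\|_{L^2_{\avg}(\om_{sr})}\le M(s)^{1-\overline{p}/2}\|u_\va\|_{L^{\overline{p}}_{\avg}(\om_{sr})}^{\overline{p}/2}$, Young to peel off $\tfrac12 M(s)$, and close with the iteration lemma. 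Your method is shorter and avoids the auxiliary decomposition, but it requires the $L^\infty$--$L^2$ bound with explicit $(s-t)^{-N}$ dependence; this is available from the paper's display for $|u_\va(x)|$ in terms of $(s-t)^{-2}t^{-\sigma}$ (the line you implicitly rely on), so the argument goes through. The paper's approach, by isolating the homogeneous part, yields the slightly sharper intermediate statement that $L^{\overline{p}}$ controls $L^2$ for \emph{solutions of $\mathcal{L}_\va v=0$}, which is of independent interest but not needed for the corollary itself.
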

\begin{proof}
To obtain more precise scale estimates, we need to make a clear exploration of the relationship between the constant of Morrey's inequality and the domain's radius. It can be seen that on a ball with a radius of $ 1 $, for $  u\in W^{1,p}(B(0,1)) $ and $ p>d $, where $ d\geq 2 $ is the dimension, then according to Morrey's theorem, we can obtain that
\begin{align}
[u]_{C^{0,1-\frac{d}{p}}(B(0,1))}\leq C\left\|\nabla u\right\|_{L^p(B(0,1))}+C\left\|u\right\|_{L^p(B(0,1))}.\nonumber
\end{align}
Then, Lemma \ref{interpolation} implies that
\begin{align}
[u]_{C^{0,1-\frac{d}{p}}(B(0,1))}\leq C\left\|\nabla u\right\|_{L^p(B(0,1))}+C\left\|u\right\|_{L^2(B(0,1))}.\nonumber
\end{align}
Taking $ v(x)=u(rx) $ and using the inequality on $ v $, we have
\begin{align}
r^{1-\frac{d}{p}}[u]_{C^{0,1-\frac{d}{p}}(B(0,r))}\leq Cr^{1-\frac{d}{p}}\left\|\nabla u\right\|_{L^p(B(0,r))}+Cr^{-\frac{d}{2}}\left\|u\right\|_{L^2(B(0,r))},\nonumber
\end{align}
\begin{align}
[u]_{C^{0,1-\frac{d}{p}}(B(0,r))}\leq C\left\|\nabla u\right\|_{L^p(B(0,r))}+Cr^{\frac{d}{p}-\frac{d}{2}-1}\left\|u\right\|_{L^2(B(0,r))}.\label{ww10}
\end{align}
Note that if we change $ B(0,r) $ to $ \om(0,r) $, $ \eqref{ww10} $ is still true. Therefore, for $ \om(x_0,r) $ such that $ x_0\in\om $, $ 0<r<\diam(\om) $, $ d=2 $, $ \sigma=1-\frac{2}{p} $ and $ 0<t<s\leq 1 $, we have
\begin{align}
[u_{\va}]_{C^{0,\sigma}(\om(x_0,tr))}&\leq  C\left\|\nabla u_{\va}\right\|_{L^p(\om(x_0,tr))}+C(tr)^{-1-\sigma}\left\|u_{\va}\right\|_{L^2(\om(x_0,tr))}\nonumber\\
&\leq C(tr)^{\frac{2}{p}}\left\|\nabla u_{\va}\right\|_{L_{\avg}^p(\om(x_0,tr))}+C(tr)^{-\sigma}\left\|u_{\va}\right\|_{L_{\avg}^2(\om(x_0,tr))}\nonumber\\
&\leq C(tr)^{-\sigma}\left\|u_{\va}\right\|_{L_{\avg}^2(\om(x_0,tr))}\label{ww12}\\
&\quad+C(tr)^{\frac{2}{p}}\left\{\frac{C}{(s-t)^2tr}\left\|u_{\va}\right\|_{L_{\avg}^2(\om(x_0,sr))}+\right.\nonumber\\
&\quad\quad+\left.\frac{C}{(s-t)t^{\frac{2}{p}}}\left\{\left\|f\right\|_{L_{\avg}^p(\om(x_0,sr))}+r\left\|F\right\|_{L_{\avg}^q(\om(x_0,sr))}\right\}\right\}\nonumber\\
&\leq\frac{Cr^{-\sigma}}{(s-t)^2t^{\sigma}}\left\|u_{\va}\right\|_{L_{\avg}^2(\om(x_0,sr))}+\frac{Cr^{1-\sigma}}{(s-t)}\left\{\left\|f\right\|_{L_{\avg}^p(\om(x_0,sr))}+r\left\|F\right\|_{L_{\avg}^{\frac{2p}{p+2}}(\om(x_0,sr))}\right\},\nonumber
\end{align}
where, for the second inequality, we use the $ W^{1,p} $ estimate $ \eqref{W^{1,p} interior estimates} $ and $ C $ depends only on $ \mu,\omega(t),\kappa,\lambda,m,p $. By choosing special $ t,s $, we can get the first inequality $ \eqref{Holder interior estimates} $. For any $ x\in \om(x_0,tr) $, by using $ \eqref{ww12} $, we have
\begin{align}
|u_{\va}(x)|&\leq \left|u_{\va}(x)-\dashint_{\om(x,\frac{(s-t)r}{2})}u_{\va}\right|+\left|\dashint_{\om(x,\frac{(s-t)r}{2})}u_{\va}\right|\nonumber\\
&\leq C[u_{\va}]_{C^{0,\sigma}(\om(x_0,\frac{(s+t)r}{2}))}r^{\sigma}+\left\|u_{\va}\right\|_{L_{\avg}^{2}(\om(x_0,\frac{(s-t)r}{2}))}\nonumber\\
&\leq \frac{C}{(s-t)^2t^{\sigma}}\left\|u_{\va}\right\|_{L_{\avg}^{2}(\om(x_0,sr))}+\frac{Cr}{(s-t)}\left\{\left\|f\right\|_{L_{\avg}^{p}(\om(x_0,sr))}+r\left\|F\right\|_{L_{\avg}^{\frac{2p}{p+2}}(\om(x_0,sr))}\right\}.\label{ww11}
\end{align}
Taking special $ t $ and $ s $, we can prove that the second inequality holds when $ \overline{p}=2 $. Next, we will discuss the case of $  0<\overline{p}<2 $. In fact, this is a standard convexity improvement. What is different from \cite{Xu1} is only the difference of a series of inequality indices when $ d=2 $. They are not the essential differences. For the sake of completeness, we will prove this conclusion here. We adopt the methods in \cite{Shen2}. Since the index $ \frac{2d}{d-2} $ appears in the proof of \cite{Shen2}, we need to make a slight adjustment when $ d=2 $. The process of proof involves the iteration from $ \om(x_0,tr) $ to $ \om(x_0,sr) $, which is why we intend to make the estimates more precise than the previous estimates. Firstly, for the proof of the second inequality, we have to do some simple observations to transform it into a homogeneous problem.

Using the dilation and translation, we can assume that $ x_0=0 $ and $ r=1 $. For $ \om(0,1) $, we can choose $ v_{\va}\in W^{1,2}(\om(0,2);\mathbb{R}^m) $ satisfying $ \mathcal{L}_{\va}(v_{\va})=0 $ in $ \om(0,2) $ and $ v_{\va}=u_{\va} $ on $ \pa(\om(0,2)) $. Let $ w_{\va}=u_{\va}-v_{\va} $, then $ \mathcal{L}_{\va}(w_{\va})=\operatorname{div}(f)+F $ in $ \om(0,2) $, and $ w_{\va}=0 $ on $ \pa(\om(0,2)) $. According to $ \eqref{infty interior estimates} $ for $ \overline{p}=2 $, we have
\begin{align}
\left\|v_{\va}\right\|_{L^{\infty}(\om(0,1))}\leq C\left\|v_{\va}\right\|_{L_{\avg}^{2}(\om(0,\frac{3}{2}))},\label{ww13}
\end{align}
and
\begin{align}
\left\|w_{\va}\right\|_{L^{\infty}(\om(0,1))}\leq C\left\{\left\|w_{\va}\right\|_{L_{\avg}^{2}(\om(0,2))}+\left\|f\right\|_{L_{\avg}^{p}(\om(0,2))}+\left\|F\right\|_{L_{\avg}^{\frac{2p}{p+2}}(\om(0,2))}\right\}.\label{ww14}
\end{align}
Moreover, since $ w_{\va}=0 $ on $ \pa(\om(0,2)) $,  using  Theorem \ref{W1p}, we get
\begin{align}
\left\|w_{\va}\right\|_{L_{\avg}^{2}(\om(0,2))}\leq C\left\|w_{\va}\right\|_{L_{\avg}^{p}(\om(0,2))}\leq C\left\|f\right\|_{L_{\avg}^{p}(\om(0,2))}+C\left\|F\right\|_{L_{\avg}^{\frac{2p}{p+2}}(\om(0,2))}.\label{ww15}
\end{align}
By   $ \eqref{ww13} $, $ \eqref{ww14} $ and $ \eqref{ww15} $, we obtain
\begin{align}
\left\|u_{\va}\right\|_{L^{\infty}(\om(0,1))}&\leq \left\|w_{\va}\right\|_{L^{\infty}(\om(0,1))}+\left\|v_{\va}\right\|_{L^{\infty}(\om(0,1))}\nonumber\\
&\leq C\left\|v_{\va}\right\|_{L_{\avg}^{2}(\om(0,\frac{3}{2}))}+C\left\|f\right\|_{L_{\avg}^{p}(\om(0,2))}+C\left\|F\right\|_{L_{\avg}^{\frac{2p}{p+2}}(\om(0,2))}.\label{ww17}
\end{align}
Since $ u_{\va}=0 $ on $ \Delta(0,2) $ and $ v_{\va}=u_{\va} $ on $ \pa(\om(0,2)) $, we have $ \mathcal{L}_{\va}(v_{\va})=0 $ in $ \om(0,2) $ and $ v_{\va}=0 $ on $ \Delta(0,2) $. We claim that,
\begin{align}
\left\|v_{\va}\right\|_{L_{\avg}^{2}(\om(0,\frac{3}{2}))}\leq C\left\|v_{\va}\right\|_{L_{\avg}^{\overline{p}}(\om(0,2))}.\label{ww16}
\end{align}
If we prove the claim, we can get the  conclusions. This is because by using $ \eqref{ww17} $ and $ \eqref{ww16} $,
\begin{align}
\left\|u_{\va}\right\|_{L^{\infty}(\om(0,1))}&\leq C\left\|v_{\va}\right\|_{L_{\avg}^{\overline{p}}(\om(0,2))}+C\left\|f\right\|_{L_{\avg}^{p}(\om(0,2))}+C\left\|F\right\|_{L_{\avg}^{\frac{2p}{p+2}}(\om(0,2))}\nonumber\\
&\leq C\left\|u_{\va}\right\|_{L_{\avg}^{\overline{p}}(\om(0,2))}+C\left\|f\right\|_{L_{\avg}^{p}(\om(0,2))}+C\left\|F\right\|_{L_{\avg}^{\frac{2p}{p+2}}(\om(0,2))}+\left\|w_{\va}\right\|_{L_{\avg}^{\overline{p}}(\om(0,2))}\nonumber\\
&\leq C\left\|u_{\va}\right\|_{L_{\avg}^{\overline{p}}(\om(0,2))}+C\left\|F\right\|_{L_{\avg}^{p}(\om(0,2))}+C\left\|f\right\|_{L_{\avg}^{\frac{2p}{p+2}}(\om(0,2))}+\left\|w_{\va}\right\|_{L_{\avg}^{2}(\om(0,2))}\nonumber\\
&\leq C\left\|u_{\va}\right\|_{L_{\avg}^{\overline{p}}(\om(0,2))}+C\left\|f\right\|_{L_{\avg}^{p}(\om(0,2))}+C\left\|F\right\|_{L_{\avg}^{\frac{2p}{p+2}}(\om(0,2))}.\nonumber
\end{align}
In this way, the proof of $ \eqref{infty interior estimates} $ comes down to the proof of $ \eqref{ww16} $. Next, we will prove the claim. We can assume that $ \left\|v_{\va}\right\|_{L_{\avg}^{\overline{p}}(\om(0,2))}=1 $,  for otherwise, setting $ \widetilde{v}_{\va}=v_{\va}/\left\|v_{\va}\right\|_{L_{\avg}^{\overline{p}}(\om(0,2))} $, we can obtain that $ \left\|\widetilde{v}_{\va}\right\|_{L_{\avg}^{\overline{p}}(\om(0,2))}=1 $ and
$$
\mathcal{L}_{\va}(\widetilde{v}_{\va})=0\text{ in }\om(0,2)\quad\text{and}\quad \widetilde{v}_{\va}=0\text{ on }\Delta(0,2).
$$
By using the results for the case that $ \left\|v_{\va}\right\|_{L_{\avg}^{\overline{p}}(\om(0,2))}=1 $, we can prove the general case. Choosing $ p_1>2 $, and using $ \eqref{ww11} $ with $ \overline{p}=2 $, we have, for $ 0<t<s\leq 1 $
\begin{align}
\left\|v_{\va}\right\|_{L_{\avg}^{p_1}(\om(0,t))}&\leq \left\|v_{\va}\right\|_{L^{\infty}(\om(0,t))}\leq \frac{C}{(s-t)^2t^{\sigma}}\left\|v_{\va}\right\|_{L_{\avg}^{2}(\om(0,s))}.\nonumber
\end{align}
For $ 0<\alpha<1 $, such that $ \frac{\alpha}{p_1}+\frac{1-\alpha}{\overline{p}}=\frac{1}{2} $, i.e. $ 0<\al=(\frac{1}{\overline{p}}-\frac{1}{2})/(\frac{1}{\overline{p}}-\frac{1}{p_1})<1 $,
\begin{align}
\left\|v_{\va}\right\|_{L_{\avg}^{2}(\om(0,t))}&\leq  \left\|v_{\va}\right\|_{L_{\avg}^{p_1}(\om(0,t))}^{\al}\left\|v_{\va}\right\|_{L_{\avg}^{\overline{p}}(\om(0,t))}^{1-\al}\leq \left(\frac{C}{(s-t)^2t^{\sigma}}\right)^{\alpha}\left\|v_{\va}\right\|_{L_{\avg}^{2}(\om(0,s))}^{\al}\left\|v_{\va}\right\|_{L_{\avg}^{\overline{p}}(\om(0,t))}^{1-\al}\nonumber\\
&\leq \frac{C}{(s-t)^{2\alpha}t^{\sigma\alpha+2\frac{(1-\alpha)}{\overline{p}}}}\left\|v_{\va}\right\|_{L_{\avg}^{2}(\om(0,s))}^{\al}\left\|v_{\va}\right\|_{L^{\overline{p}}(\om(0,1))}^{1-\al}.\nonumber
\end{align}
Let $ I(t)=\left\|v_{\va}\right\|_{L_{\avg}^{2}(\om(0,t))} $, we can obtain the iteration formula as follows
\begin{align}
I(t)\leq \frac{C}{(s-t)^{2\alpha}t^{\sigma\alpha+2\frac{(1-\alpha)}{\overline{p}}}}\left\|v_{\va}\right\|_{L^{\overline{p}}(\om(0,1))}^{1-\al}(I(s))^{\alpha}.\nonumber
\end{align}
Take the logarithm on both sides, multiply them by $ \frac{1}{s} $, choose $ t=s^b $  with $ b=\frac{1+\al}{2\al} $ and integrate $ s $ from $ \frac{1}{2} $ to $ 1 $. Then we have
\begin{align}
\int_{\frac{1}{2}}^{1}\ln(I(t))\frac{ds}{s}\leq C+\alpha\int_{\frac{1}{2}}^{1}\ln(I(s))\frac{ds}{s}+C\ln\left\|v_{\va}\right\|_{L^{\overline{p}}(\om(0,1))}.\nonumber
\end{align}
If there exists $ t_0\in [(\frac{1}{2})^b,\frac{1}{2}] $ such that $ I(t_0)\leq 1 $,  then it is easy to get that
\begin{align}
\left(\dashint_{\om(0,(\frac{1}{2})^b)}|v_{\va}|^2\right)^{\frac{1}{2}}\leq CI(t_0)\leq C\leq C\left(\dashint_{\om(0,2)}|v_{\va}|^{\overline{p}}\right)^{\frac{1}{\overline{p}}},\label{gj1}
\end{align}
where $ C $ depends only on $ \mu,\omega(t),\kappa,\lambda,m,p,\overline{p},q,\om $. If for any $ t\in [(\frac{1}{2})^b,\frac{1}{2}] $, $ I(t)\geq 1 $, then 
\begin{align}
\int_{\frac{1}{2}}^{1}\ln(I(s^b))\frac{ds}{s}=\frac{1}{b}\int_{(\frac{1}{2})^b}^{1}\ln(I(s))\frac{ds}{s}\geq \frac{1}{b}\int_{\frac{1}{2}}^{1}\ln(I(s))\frac{ds}{s}.\nonumber
\end{align}
Hence $ \frac{1}{b}-\alpha=\frac{\al+1}{2\al}-\al>0 $, and
\begin{align}
\left(\frac{1}{b}-\alpha\right)\int_{\frac{1}{2}}^{1}\ln(I(s))\frac{ds}{s}\leq C+C\ln\left\|v_{\va}\right\|_{L^{\overline{p}}(\om(0,1))}.\label{gj2}
\end{align}
Moreover, for any $ t\in [\frac{1}{2},1] $, there exists $ C $ depending only on $ \mu,\omega(t),\kappa,\lambda,m,p,\overline{p},q,\om $, such that
\begin{align}
I(t)\geq C\left(\dashint_{\om(0,(\frac{1}{2})^b)}|v_{\va}|^2\right)^{\frac{1}{2}}.\label{gj3}
\end{align}
In view of $ \eqref{gj2} $ and $ \eqref{gj3} $, it can be got that
\begin{align}
\ln\left\{\left(\dashint_{\om(0,(\frac{1}{2})^b)}|v_{\va}|^2\right)^{\frac{1}{2}}\right\}\leq C+C\ln\left\|v_{\va}\right\|_{L^{\overline{p}}(\om(0,1))},\nonumber
\end{align}
and then
\begin{align}
\left(\dashint_{\om(0,(\frac{1}{2})^b)}|v_{\va}|^2\right)^{\frac{1}{2}}\leq C\left(\dashint_{\om(0,1)}|v_{\va}|^{\overline{p}}\right)^{\frac{1}{\overline{p}}}\leq C\left(\dashint_{\om(0,2)}|v_{\va}|^{\overline{p}}\right)^{\frac{1}{\overline{p}}}.\nonumber
\end{align}
This, together with $ \eqref{gj1} $ and almost the same arguments, implies that
there is a constant $ r_0=\min\{\frac{1}{16},(\frac{1}{2})^b\} $ such that for any $ x\in \om(0,\frac{3}{2}) $,
\begin{align}
\left(\dashint_{\om(x,r_0)}|v_{\va}|^2\right)^{\frac{1}{2}}\leq C\left(\dashint_{\om(0,2)}|v_{\va}|^{\overline{p}}\right)^{\frac{1}{\overline{p}}}.\nonumber
\end{align}
By covering $ \om(0,\frac{3}{2}) $ with $ \om(x,r_0) $ defined above, we can complete the proof of claim $ \eqref{ww16} $.
\end{proof}
%==============================================================

%==============================================================
\begin{cor}
Suppose that $ A\in \operatorname{VMO}(\mathbb{R}^2) $ satisfies $ \eqref{Ellipticity} $, $ \eqref{Periodicity} $, and other coefficients of $ \mathcal{L}_{\va} $,$ V,B,c $ satisfy $ \eqref{Boundedness} $. $ \om $ is a $ C^{1,\eta} $ $ (0<\eta<1) $  bounded domain in $ \mathbb{R}^2 $, $ f\in L^p(\om;\mathbb{R}^{m\times 2}) $, $ F\in L^q(\om;\mathbb{R}^m) $ and $ g\in C^{0,1}(\pa\om;\mathbb{R}^m) $, where $ 2<p<\infty $, $ q=\frac{2p}{p+2} $ and $ \sigma=1-\frac{2}{p} $. Then the weak solution to $ \eqref{equation} $ satisfies the uniform estimate
\begin{align}
\left\|u_{\va}\right\|_{C^{0,\sigma}(\om)}\leq C\left\{\left\|F\right\|_{L^q(\om)}+\left\|f\right\|_{L^p(\om)}+\left\|g\right\|_{C^{0,1}(\pa\om)}\right\},\label{Holder estimates elementary}
\end{align}
where $ C $ depends only on $ \mu,\omega(t),\kappa,\lambda,m,p,\sigma $ and $ \om $.
\end{cor}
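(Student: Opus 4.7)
The strategy is precisely the reduction sketched after Theorem \ref{Holder}: extend $g$ off the boundary and convert the inhomogeneous boundary problem into a zero-boundary problem, which we can then feed into Theorem \ref{W1p} together with Morrey's embedding. Since $\om$ is $C^{1,\eta}$ and hence Lipschitz, there exists an extension $G\in C^{0,1}(\om;\mathbb{R}^m)$ with $G=g$ on $\pa\om$ and $\|G\|_{C^{0,1}(\om)}\leq C\|g\|_{C^{0,1}(\pa\om)}$. Setting $v_{\va}=u_{\va}-G$, a direct computation shows $v_{\va}$ solves
\begin{align*}
\mathcal{L}_{\va}(v_{\va})=\operatorname{div}\bigl(f-A(x/\va)\nabla G-V(x/\va)G\bigr)+\bigl(F-B(x/\va)\nabla G-(c(x/\va)+\lambda)G\bigr)\ \text{in }\om,
\end{align*}
with $v_{\va}=0$ on $\pa\om$. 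Denote the new data by $\widetilde{f}$ and $\widetilde{F}$. By the boundedness conditions $\eqref{Boundedness}$ on $A,V,B,c$ and the fact that $\nabla G, G\in L^\infty(\om)$, we get $\|\widetilde{f}\|_{L^p(\om)}\leq C\{\|f\|_{L^p(\om)}+\|G\|_{C^{0,1}(\om)}\}$ and $\|\widetilde{F}\|_{L^q(\om)}\leq C\{\|F\|_{L^q(\om)}+\|G\|_{C^{0,1}(\om)}\}$, the latter using $|\om|<\infty$ and $L^\infty\subset L^q$.

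Next I would apply the $W^{1,p}$ estimate $\eqref{W^{1,p} estimates}$ of Theorem \ref{W1p} with zero boundary data to $v_{\va}$, which gives
\begin{align*}
\|\nabla v_{\va}\|_{L^p(\om)}\leq C\{\|\widetilde{f}\|_{L^p(\om)}+\|\widetilde{F}\|_{L^q(\om)}\}\leq C\{\|f\|_{L^p(\om)}+\|F\|_{L^q(\om)}+\|g\|_{C^{0,1}(\pa\om)}\}.
\end{align*}
Since $v_{\va}=0$ on $\pa\om$ and $p>2=d$, Poincar\'e's inequality together with Morrey's embedding $W_0^{1,p}(\om)\hookrightarrow C^{0,\sigma}(\om)$ with $\sigma=1-\frac{2}{p}$ yields
\begin{align*}
\|v_{\va}\|_{C^{0,\sigma}(\om)}\leq C\|\nabla v_{\va}\|_{L^p(\om)}\leq C\{\|f\|_{L^p(\om)}+\|F\|_{L^q(\om)}+\|g\|_{C^{0,1}(\pa\om)}\}.
\end{align*}

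Finally, writing $u_{\va}=v_{\va}+G$ and using the trivial embedding $C^{0,1}(\om)\hookrightarrow C^{0,\sigma}(\om)$ for $0<\sigma<1$ on a bounded domain, we obtain
\begin{align*}
\|u_{\va}\|_{C^{0,\sigma}(\om)}\leq \|v_{\va}\|_{C^{0,\sigma}(\om)}+\|G\|_{C^{0,\sigma}(\om)}\leq C\{\|f\|_{L^p(\om)}+\|F\|_{L^q(\om)}+\|g\|_{C^{0,1}(\pa\om)}\},
\end{align*}
which is $\eqref{Holder estimates elementary}$. There is no real obstacle in this argument: everything reduces to the $W^{1,p}$ theory already established in Theorem \ref{W1p} and a standard Morrey embedding. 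The only small point to keep track of is the compatibility of exponents ($p>2=d$ is precisely what makes Morrey available with exponent $\sigma=1-2/p$), and the fact that one needs the extension $G$ to lie in $C^{0,1}(\om)$ rather than merely $C^{0,\sigma}$, which is available because the hypothesis is strengthened from $C^{0,\sigma}(\pa\om)$ to $C^{0,1}(\pa\om)$. This is also why this corollary is genuinely weaker than Theorem \ref{Holder} and does not require the Green function machinery.
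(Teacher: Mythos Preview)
Your argument is correct and is exactly the approach the paper indicates: extend $g$ to $G\in C^{0,1}(\om)$, reduce to a zero-boundary problem, apply the $W^{1,p}$ estimate of Theorem \ref{W1p}, and finish with Morrey's embedding. One harmless typo: in the equation for $v_{\va}$ the divergence term should be $\operatorname{div}\bigl(f+A(x/\va)\nabla G+V(x/\va)G\bigr)$ (plus signs, since $-\mathcal{L}_{\va}(G)$ contributes $+\operatorname{div}(A\nabla G+VG)$), but this does not affect the norm estimates.
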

\begin{proof}
One can use Sobolev embedding theorem and Theorem \ref{W1p} to finish the proof. For details, one can see Corollary 3.8 in \cite{Xu1}.
\end{proof}

If we do not assume that $ A\in \VMO(\mathbb{R}^2) $, we have the following theorem given by Theorem 4.3.3 in \cite{Shen1}.

\begin{thm}\label{ww29}
Suppose that A satisfies $ \eqref{Ellipticity} $, and other coefficients of $ \mathcal{L}_{\va} $,$ V,B,c $ satisfy $ \eqref{Boundedness} $. Let $ \om $ be a bounded Lipschitz domain in $ \mathbb{R}^d $ with $ d\geq 2 $. Then there exists $ \delta\in(0,\frac{1}{2}) $, depending only on $ \mu $ and $ \om $, such that for any $ F\in W^{-1,p}(\om;\mathbb{R}^{m})$ and $g \in B^{1-\frac{1}{p}, p}\left(\pa\om;\mathbb{R}^{m}\right)$ with $ |\frac{1}{p}-\frac{1}{2}|<\delta $, there exists a unique solution in $ W^{1,p}(\om ; \mathbb{R}^{m})$ to the Dirichlet problem: $ L_{\va}(u_{\va})=F $ in $ \om $ and $ u_{\va}=g $ on $ \pa\om $. Moreover, the solution satisfies the estimate
\begin{align}
\left\|u_{\va}\right\|_{W^{1,p}(\om)}\leq C\left\{\left\|F\right\|_{W^{-1,p}(\om)}+\left\|g\right\|_{B^{1-\frac{1}{p},p}(\pa\om)}\right\},\nonumber
\end{align}
with constant $ C $ depending only on $ p,\mu $ and $\om $.
\end{thm}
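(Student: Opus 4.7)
Since the statement is for $ L_\va $ on a general Lipschitz domain with merely bounded measurable $ A $ and for $ p $ close to $ 2 $, the plan is a Meyers--Gehring self-improvement combined with Shen's real variable method, completed by a duality step. The starting point is the $ L^2 $ theory: Theorem \ref{energy} (which only uses $ \eqref{Ellipticity} $ and $ \eqref{Boundedness} $) gives existence, uniqueness and the $ W^{1,2} $ bound for $ p=2 $ after reducing to $ g=0 $ by a Besov extension $ G\in W^{1,2}(\om;\mathbb{R}^m) $ with $ \left\|G\right\|_{W^{1,2}(\om)}\leq C\left\|g\right\|_{B^{1/2,2}(\pa\om)} $.

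For $ p $ slightly above $ 2 $, I would first establish a uniform reverse H\"{o}lder inequality for local weak solutions of $ L_\va(v)=0 $. Combining Caccioppoli (Lemma \ref{Caccioppoli}) with the Poincar\'{e}--Sobolev embedding on balls and on star-shaped Lipschitz half-ball neighborhoods of $ \pa\om $ yields, uniformly in $ \va $ (because $ A(x/\va) $ has the same ellipticity constant as $ A $),
\begin{align}
\left(\dashint_{\om(x_0,r)}|\nabla v|^{2}dx\right)^{\frac{1}{2}}\leq C\left(\dashint_{\om(x_0,2r)}|\nabla v|^{\frac{2d}{d+2}}dx\right)^{\frac{d+2}{2d}}.\nonumber
\end{align}
Gehring's self-improvement lemma then upgrades this to
\begin{align}
\left(\dashint_{\om(x_0,r)}|\nabla v|^{q}dx\right)^{\frac{1}{q}}\leq C\left(\dashint_{\om(x_0,2r)}|\nabla v|^{2}dx\right)^{\frac{1}{2}}\nonumber
\end{align}
for some $ q=q(\mu,\om)>2 $, independent of $ \va $.

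Next I would apply Shen's real variable argument (a good-$\lambda$/stopping-time decomposition on dyadic cubes intersected with $ \om $) to turn this local higher integrability of homogeneous $ L^2 $-solutions into a global $ W^{1,p} $ estimate: for $ 2<p<q $ and $ f\in L^p(\om;\mathbb{R}^{m\times d}) $, the solution of $ L_\va(u_\va)=\operatorname{div}(f) $ with $ u_\va=0 $ on $ \pa\om $ satisfies $ \left\|\nabla u_\va\right\|_{L^p(\om)}\leq C\left\|f\right\|_{L^p(\om)} $. Splitting a general $ F\in W^{-1,p}(\om;\mathbb{R}^m) $ as $ \operatorname{div}(f) $ with $ \left\|f\right\|_{L^p(\om)}\leq C\left\|F\right\|_{W^{-1,p}(\om)} $ and absorbing non-zero $ g $ via an extension into $ W^{1,p}(\om;\mathbb{R}^m) $ then gives the estimate for $ 2\leq p<q $. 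The range $ p'<p\leq 2 $ follows by duality exactly as in Lemma \ref{ww5}: the adjoint $ L_\va^*=-\operatorname{div}(A^*(x/\va)\nabla) $ has the same ellipticity constant on the same $ \om $, so the estimate already established for $ L_\va^* $ at an exponent slightly above $ 2 $ converts, via $ \int_\om\nabla u_\va\cdot f\,dx=\langle F,v_\va\rangle $ with $ L_\va^*(v_\va)=\operatorname{div}(f) $, into a $ W^{1,p} $ bound for $ L_\va $. Setting $ \delta=\frac{1}{2}-\frac{1}{q} $ merges both ranges into $ |\frac{1}{p}-\frac{1}{2}|<\delta $.

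The main technical obstacle is the boundary half of the reverse H\"{o}lder inequality. Interior to $ \om $ the Caccioppoli/Poincar\'{e}--Sobolev combination on balls is routine, but near $ \pa\om $ one must replace balls by star-shaped Lipschitz neighborhoods and use a Poincar\'{e} inequality compatible with the vanishing trace on $ \Delta(x_0,r) $; the constant, hence the Gehring exponent $ q $ and ultimately $ \delta $, depends on the Lipschitz character of $ \om $, and this is precisely the source of the $ \om $-dependence of $ \delta $ in the theorem. Once the boundary Gehring step is in place, the Shen real-variable decomposition and the duality step are essentially formal.
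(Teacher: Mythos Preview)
The paper does not give its own proof of this theorem: it simply quotes it as Theorem 4.3.3 of \cite{Shen1} and uses it as a black box. Your outline---Caccioppoli plus Sobolev--Poincar\'e to get a scale-invariant reverse H\"older inequality for local homogeneous solutions (interior and up to the Lipschitz boundary), Gehring self-improvement to some $q>2$, a real-variable/good-$\lambda$ argument to globalize, and duality for $p<2$---is precisely the strategy behind that cited result, so the proposal is correct and on target.

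Two minor remarks. First, the ingredients you cite from the paper (Theorem \ref{energy}, Lemma \ref{Caccioppoli}) are stated for $\mathcal{L}_\va$ with $\lambda\ge\lambda_0$, while the theorem concerns the bare $L_\va$; for $L_\va$ both the $W^{1,2}$ solvability and Caccioppoli hold directly from ellipticity with no $\lambda$ and no lower-order hypotheses, so it is cleaner to invoke them in that simpler form. Second, for the limited goal of a small $\delta$-window around $p=2$, the classical Meyers approach (reverse H\"older for the \emph{inhomogeneous} equation with $f$ as the increasing term, then Gehring) already delivers the global $W^{1,p}$ estimate directly, so Shen's real-variable machinery is not strictly needed here; it becomes essential only when one pushes to the full range $1<p<\infty$ under additional regularity such as VMO.
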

From Theorem \ref{ww29} and the proof of Theorem \ref{W1p}, we can obtain the $ W^{1,p} $ estimates for the operator $ \mathcal{L}_{\va} $ without the assumption that $ A\in \VMO(\mathbb{R}^2) $.

\begin{thm}\label{W1p 2} Suppose that $ A $ satisfies $ \eqref{Ellipticity} $, other coefficients of $ \mathcal{L}_{\va} $, $ V,B,c $ satisfy $ \eqref{Boundedness} $ and $ \om $ is a $ C^{1,\eta} $ $ (0<\eta<1) $ bounded domain in $ \mathbb{R}^2 $. Then, there exists $ \delta\in(0,\frac{1}{2}) $, depending only on $ \mu $ and $ \om $, such that for any $ |\frac{1}{p}-\frac{1}{2}|<\delta $, the Dirichlet problem $ \mathcal{L}_{\va}(u_{\va})=\operatorname{div}(f)+F $ in $ \om $, $ u_{\va}=g $ on $ \pa\om $ with $ f\in L^{p}(\om;\mathbb{R}^{m\times 2})$, $ F\in L^{q}(\om;\mathbb{R}^{m})$ and $ g\in B^{1-\frac{1}{p}, p}\left(\pa\om;\mathbb{R}^{m}\right)$ has a unique weak solution $ u_{\va}\in W^{1,p}(\om;\mathbb{R}^m) $, whenever $ \lambda\geq\lambda_0 $. Furthermore, the solution satisfies the uniform estimate
\begin{align}
\left\|\nabla u_{\va}\right\|_{L^{p}(\om)}\leq C\left\{\left\|f\right\|_{L^p(\om)}+\left\|F\right\|_{L^q(\om)}+\left\|g\right\|_{B^{1-\frac{1}{p},p}(\pa\om)}\right\},\label{W^{1,p} estimates 2}
\end{align}
where $ q=\frac{2p}{p+2} $ if $ p>2 $, $ 1<q<\frac{1}{1-2\delta} $ if $ p=2 $, $ q=1 $ if $ \frac{1}{1+2\delta}<p<2 $ and $ C $ depends only on $ \mu,\omega(t),\kappa,\lambda,p,q,m $ and $ \om $.
\end{thm}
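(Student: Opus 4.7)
The strategy is to mirror the proof of Theorem \ref{W1p}, replacing the VMO-based Theorem \ref{W1p L} with its partial analog Theorem \ref{ww29} at every step. The restriction $|\tfrac{1}{p}-\tfrac{1}{2}|<\delta$ is precisely what keeps every exponent encountered in the argument inside the admissible window for Theorem \ref{ww29}, so the same three-step scheme (interior divergence source $f$, interior source $F$, boundary data $g$) will go through without iteration.

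First I would establish the analog of Lemma \ref{ww2}: if $\mathcal{L}_{\va}(u_{\va})=\operatorname{div}(f)$ in $\om$ and $u_{\va}=0$ on $\pa\om$, then $\|\nabla u_{\va}\|_{L^p}\leq C\|f\|_{L^p}$ for $|\tfrac{1}{p}-\tfrac{1}{2}|<\delta$. The $p=2$ case is immediate from the energy estimate in Theorem \ref{energy}. For $2<p<\tfrac{2}{1-2\delta}$, I would rewrite the equation as
\begin{equation*}
L_{\va}(u_{\va})=\operatorname{div}(f+V_{\va}u_{\va})-B_{\va}\nabla u_{\va}-(c_{\va}+\lambda)u_{\va}
\end{equation*}
and apply Theorem \ref{ww29} with exponent $p$. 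The divergence term contributes $\|f\|_{L^p}+\|u_{\va}\|_{L^p}$, while the non-divergence sources lie in $L^{2p/(p+2)}\hookrightarrow W^{-1,p}(\om)$ (dual to the two-dimensional embedding $W^{1,p'}_0\hookrightarrow L^{2p/(p-2)}$), with norm controlled by $\|\nabla u_{\va}\|_{L^2}+\|u_{\va}\|_{L^2}$ via Hölder on the bounded domain $\om$. Both are in turn bounded by $\|f\|_{L^p}$ through the $L^2$ energy estimate and Lemma \ref{interpolation}. For $\tfrac{2}{1+2\delta}<p<2$, I would use duality against $\mathcal{L}_{\va}^*$, which has the same structure; the condition $|\tfrac{1}{p}-\tfrac{1}{2}|=|\tfrac{1}{p'}-\tfrac{1}{2}|$ guarantees that $p'$ lies in the admissible window $(2,\tfrac{2}{1-2\delta})$ where the previous step already applies.

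Second, I would prove the analog of Lemma \ref{ww5} for $\mathcal{L}_{\va}(u_{\va})=F$ with $u_{\va}=0$ on $\pa\om$ by duality: for test $f\in C_0^1(\om)$, solve $\mathcal{L}_{\va}^*(v_{\va})=\operatorname{div}(f)$ with $v_{\va}=0$ on $\pa\om$ and invoke the previous step at exponent $p'$ to get $\|\nabla v_{\va}\|_{L^{p'}}\leq C\|f\|_{L^{p'}}$. Since $\int_{\om}\nabla u_{\va}\cdot f=-\int_{\om}F v_{\va}$, combining Hölder with the two-dimensional Sobolev embedding for $v_{\va}$ yields the three cases for $q$ exactly as in the proof of Lemma \ref{ww5}. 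Finally, non-zero boundary data $g\in B^{1-1/p,p}(\pa\om)$ is absorbed as in the proof of Theorem \ref{W1p}: extend $g$ to $G\in W^{1,p}(\om)$ with $\|G\|_{W^{1,p}}\leq C\|g\|_{B^{1-1/p,p}(\pa\om)}$ and apply the already-established homogeneous estimate to $h_{\va}=u_{\va}-G$, whose equation produces additional sources in $L^p$ and $L^q$ controlled by $\|g\|_{B^{1-1/p,p}(\pa\om)}$.

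The main obstacle is the narrowness of the admissible range: unlike the VMO setting, where one can iterate freely through all $1<p<\infty$, here every invocation of Theorem \ref{ww29} at an exponent other than $2$ must be justified by the smallness of $\delta$, and so must every use of its adjoint counterpart. Fortunately, the symmetric condition $|\tfrac{1}{p}-\tfrac{1}{2}|=|\tfrac{1}{p'}-\tfrac{1}{2}|$ means admissibility is self-dual, so the duality argument closes without requiring any enlargement of the exponent window, and the remaining ingredients (the Caccioppoli inequality, the interpolation Lemma \ref{interpolation}, and the two-dimensional Sobolev embeddings) are identical to those already used in the proofs of Lemmas \ref{ww2} and \ref{ww5}.
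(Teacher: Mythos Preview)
Your proposal is correct and follows essentially the same approach as the paper: the paper's proof of Theorem \ref{W1p 2} consists of the single sentence ``From Theorem \ref{ww29} and the proof of Theorem \ref{W1p}, we can obtain the $W^{1,p}$ estimates for the operator $\mathcal{L}_{\va}$ without the assumption that $A\in\VMO(\mathbb{R}^2)$,'' and your three-step scheme is precisely a detailed execution of that instruction. Your observation that the admissible window is self-dual, so that no iteration is needed to pass between $p$ and $p'$, is exactly the point that makes the argument close in the restricted range.
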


By using Sobolev embedding theorem for the case that $ d=2 $, we can deduce that $ u_{\va} $ in Theorem \ref{W1p 2} is actually H\"{o}lder continuous.

\section{Green functions for the operator $ \mathcal{L}_{\va} $}
The following definitions about $ \operatorname{BMO} $ and atom functions are from \cite{Taylor} and is essential for the construction of the Green functions for the elliptic operator $ \mathcal{L}_{\va} $.

\begin{defn}[$ \operatorname{BMO} $ space and atom functions]\label{BMO atom}
For $ x_0\in\mathbb{R}^d $, $ r>0 $ and $ \om $ a domain in $ \mathbb{R}^d $. We denote
\begin{align}
\om(x_0,r):=\om\cap B(x_0,r),\nonumber
\end{align}
as before. We define that the $ \operatorname{BMO}(\om) $ is a space containing functions such that
\begin{align}
\left\|u\right\|_*=\sup\left\{\dashint_{\om(x_0,r)}|u-\overline{u}_{x_0,r}|:x_0\in\overline{\om},r>0\right\} \nonumber
\end{align}
is finite, where we define
\begin{align}
\overline{u}_{x_0,r}:=\left\{\begin{array}{ccc}
0 & \text{ if }  & r\geq \operatorname{dist}(x_0,\pa\om), \\
\dashint_{\om(x_0,r)}u & \text{ if } & r< \operatorname{dist}(x_0,\pa\om).
\end{array}\right.\label{ww35}
\end{align}
We call the bounded measurable function $ a $ atom function in $ \om $ if $ \operatorname{supp}(a)\subset\om(x_0,r) $ with $ x_0\in\overline{\om} $ for $ r>0 $ and
\begin{align}
\left\|a\right\|_{\infty}\leq\frac{1}{|\om(x_0,r)|},\quad \overline{a}_{x_0,r}=0. \nonumber
\end{align}
\end{defn}

%==============================================================
\begin{lem}[Generalized Morrey's inequality]
Let $ \om $ be a bounded $ C^{1,\eta} $ domain in $ \mathbb{R}^d $($ d\geq 2 $) with $ 0<\eta<1 $. For $ d<p<\infty $, $ \rho>0 $, $ u\in W_0^{1,p}(\om;\mathbb{R}^m) $, and $ x_0\in \overline{\om} $, we have
\begin{align}
\left\|u-\overline{u}_{x_0,\rho}\right\|_{L^{\infty}(\om(x_0,\rho))}\leq C\rho^{1-\frac{d}{p}}\left(\int_{\om(x_0,2\rho)}|\nabla u|^pdy\right)^{\frac{1}{p}},\label{Generalized Morrey inequality}
\end{align}
where $ C $ depends only on $ m,d,p,\om $.
\end{lem}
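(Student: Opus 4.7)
The key observation is that the definition of $\overline{u}_{x_0,\rho}$ has two cases, and the natural proof of the inequality splits accordingly into the interior case and the near-boundary case. The whole argument reduces to standard Morrey's inequality once one extends $u$ by zero.

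First I would extend $u$ by zero across $\pa\om$ to obtain $\tilde u\in W^{1,p}(\mathbb{R}^d)$; this is legitimate because $u\in W_0^{1,p}(\om)$ and $\om$ is $C^{1,\eta}$. Since $p>d$, $\tilde u$ admits a H\"older continuous representative (still denoted $\tilde u$), and because $\tilde u\equiv 0$ outside $\om$, continuity forces $\tilde u\equiv 0$ on $\pa\om$. Moreover $\nabla\tilde u=\nabla u\cdot\chi_{\om}$, so $\|\nabla\tilde u\|_{L^p(B(x_0,2\rho))}=\|\nabla u\|_{L^p(\om(x_0,2\rho))}$.

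Next I would split into two cases according to the definition \eqref{ww35}.

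\emph{Case 1: $\rho<\operatorname{dist}(x_0,\pa\om)$.} Then $\om(x_0,\rho)=B(x_0,\rho)\subset\om$ and $\overline{u}_{x_0,\rho}=\dashint_{B(x_0,\rho)}u$. Classical (scale-invariant) Morrey's inequality applied on $B(x_0,\rho)$ yields directly
\begin{align*}
\|u-\overline{u}_{x_0,\rho}\|_{L^{\infty}(B(x_0,\rho))}\leq C\rho^{1-d/p}\left(\int_{B(x_0,\rho)}|\nabla u|^p\,dy\right)^{1/p},
\end{align*}
which is stronger than the claimed bound since $B(x_0,\rho)\subset\om(x_0,2\rho)$.

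\emph{Case 2: $\rho\geq\operatorname{dist}(x_0,\pa\om)$.} Here $\overline{u}_{x_0,\rho}=0$, so the claim reduces to a pointwise bound on $|u|$ in $\om(x_0,\rho)$. Pick $z_0\in\pa\om$ with $|x_0-z_0|=\operatorname{dist}(x_0,\pa\om)\leq\rho$; then for any $y\in\om(x_0,\rho)$ we have $|y-z_0|\leq 2\rho$, and both points lie in $B(x_0,2\rho)$. Applying the standard Morrey H\"older estimate to $\tilde u$ on $B(x_0,2\rho)$ and using $\tilde u(z_0)=0$ gives
\begin{align*}
|u(y)|=|\tilde u(y)-\tilde u(z_0)|\leq C|y-z_0|^{1-d/p}\|\nabla\tilde u\|_{L^p(B(x_0,2\rho))}\leq C\rho^{1-d/p}\|\nabla u\|_{L^p(\om(x_0,2\rho))},
\end{align*}
which is exactly the desired inequality.

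This proof is essentially routine; the only subtlety to verify is that the zero-extension of a $W_0^{1,p}$ function on the $C^{1,\eta}$ domain $\om$ actually lies in $W^{1,p}(\mathbb{R}^d)$ and is globally H\"older continuous with vanishing boundary values, so that one may evaluate it at the boundary point $z_0$. That is the one place where the geometric assumption on $\pa\om$ is used, and it is standard for Lipschitz (hence $C^{1,\eta}$) domains.
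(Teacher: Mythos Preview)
Your proof is correct and follows essentially the same approach as the paper: both split into the two cases dictated by the definition of $\overline{u}_{x_0,\rho}$, apply the standard Morrey inequality in the interior case, and in the near-boundary case pick a nearest boundary point where $u$ vanishes to convert the pointwise bound into a H\"older oscillation estimate. Your explicit zero-extension of $u$ to $\mathbb{R}^d$ is a clean way to justify evaluating $u$ at the boundary point and applying Morrey on the full ball $B(x_0,2\rho)$; the paper instead works directly on $\om(x_0,2\rho)$ and uses $u(\overline{x})=0$, but the underlying idea is identical.
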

\begin{proof}
If $ \rho\geq\operatorname{dist}(x_0,\pa\om) $, we have $ \overline{u}_{x,\rho}=0 $. Then, we can choose $\overline{x}\in\pa\om $ such that $ \operatorname{dist}(x_0,\pa\om)=|x-\overline{x}| $. For $ x\in \om(x_0,\rho) $, we have
\begin{align}
|u(x)|\leq |x-\overline{x}|^{1-\frac{d}{p}}[u]_{C^{0,1-\frac{d}{p}}(\om(x_0,2\rho))}+|u(\overline{x})|\leq C\rho^{1-\frac{d}{p}}\left(\int_{\om(x_0,2\rho)}|\nabla u|^pdy\right)^{\frac{1}{p}}.\nonumber
\end{align}
If $ \rho<\operatorname{dist}(x_0,\pa\om) $ and $ x\in \om(x_0,\rho)) $, we have
\begin{align}
|u(x)-\overline{u}_{x_0,\rho}|\leq C\rho^{1-\frac{d}{p}}[u]_{C^{0,1-\frac{d}{p}}(\om(x_0,2\rho))}\leq C\rho^{1-\frac{d}{p}}\left(\int_{\om(x_0,2\rho)}|\nabla u|^pdy\right)^{\frac{1}{p}}.\nonumber
\end{align}
This proves the conclusion.
\end{proof}
%==============================================================
\begin{lem}
Assume that $ A\in\operatorname{VMO}(\mathbb{R}^2) $ satisfies $ \eqref{Ellipticity} $ and $ \eqref{Periodicity} $, other coefficients of $ \mathcal{L}_{\va} $ satisfy $ \eqref{Boundedness} $ and $ \om $ is a $ C^{1,\eta} $ $ (0<\eta<1) $ bounded domain in $ \mathbb{R}^2 $. Let $ a $ be an atom function in $ \om $. If $ u_{\va} $ is the unique weak solution for the Dirichlet problem $ \mathcal{L}_{\va}(u_{\va})=a $ in $ \om $ and $ u_{\va}=0 $ on $ \pa\om $, then there exists a constant $ C $ depending only on $ \mu,\omega(t)\kappa,m,\lambda,p,q, $ and $ \om $ such that
\begin{align}
\left\|u_{\va}\right\|_{\infty}\leq C.\label{L^{infty} estimates with atom}
\end{align}
\end{lem}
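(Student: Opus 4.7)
The plan is to combine a uniform $H^1$ bound for $u_\va$ with a near-field analysis based on rescaling to unit scale and a far-field propagation via the homogeneous equation, with the atom's cancellation $\overline{a}_{x_0,r}=0$ being essential throughout to bypass the endpoint Sobolev failure in dimension two.

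First I would establish $\|u_\va\|_{H_0^1(\om)}\le C$ uniformly in the atom. For an interior atom ($r<\operatorname{dist}(x_0,\pa\om)$), the vanishing mean $\int a=0$ combined with Bogovskii's construction on $B(x_0,r)$ produces $G\in W_0^{1,\infty}(B(x_0,r))$ with $\operatorname{div}(G)=a$ and $\|G\|_{L^\infty}\le Cr\|a\|_{L^\infty}\le Cr^{-1}$, so that $\|G\|_{L^2(\om)}\le C$; then Lemma \ref{ww2} with $p=2$ applied to $\mathcal{L}_\va(u_\va)=\operatorname{div}(G)$ yields $\|\nabla u_\va\|_{L^2}\le C$. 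For a boundary atom ($r\ge\operatorname{dist}(x_0,\pa\om)$), testing the equation against $u_\va$ and using a Poincar\'e inequality on $\om(x_0,r)$ (valid because $u_\va$ vanishes on $\Delta(x_0,r)$) gives $|\langle a,u_\va\rangle|\le\|a\|_{L^\infty}|\om(x_0,r)|^{1/2}\|u_\va\|_{L^2(\om(x_0,r))}\le C\|\nabla u_\va\|_{L^2}$, again producing the $H^1$ bound, and hence $\|u_\va\|_{L^2(\om)}\le C$.

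For the near-field, at a point $x\in\om(x_0,4r)$ I would rescale $y=x_0+Rz$ with $R=8r$, producing $\tilde u_\va(z):=u_\va(x_0+Rz)$ which solves $\widetilde{\mathcal{L}}_{\va/R}(\tilde u_\va)=R^2 a(x_0+R\cdot)=:\tilde a$ on the rescaled domain $\tilde\om$ with $\tilde u_\va=0$ on $\pa\tilde\om$, where $\|\tilde a\|_{L^\infty}\le C$, $\operatorname{supp}(\tilde a)\subset B(0,1/8)$, and $\tilde a$ retains mean zero. Rewrite $\tilde a=\operatorname{div}(\tilde G)$ via Bogovskii, with $\|\tilde G\|_{L^\infty}\le C$ and $\operatorname{supp}(\tilde G)\subset B(0,1/8)$, so $\|\tilde G\|_{L^p}\le C$ uniformly for every $p\in(1,\infty)$; the key point is that the atom's $L^\infty$ normalization together with its support size in two dimensions renders $\|\tilde G\|_{L^2}$ scale-invariant. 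A rescaled version of Lemma \ref{ww2} for some $p>2$ combined with the local $L^\infty$ estimate \eqref{infty interior estimates} applied on $B(0,1/2)\subset\tilde\om$ (with the required local $L^2$ bound coming from the scale-invariant energy estimate together with Poincar\'e using the ``distant'' boundary $\pa\tilde\om$) then yields $\|\tilde u_\va\|_{L^\infty(B(0,1/2))}\le C$, whence $\|u_\va\|_{L^\infty(\om(x_0,4r))}\le C$.

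For the far-field, at any $x\in\om$ with $|x-x_0|\ge 4r$, $u_\va$ satisfies the homogeneous equation $\mathcal{L}_\va(u_\va)=0$ in $\om\setminus\bar B(x_0,4r)$ with $u_\va=0$ on $\pa\om$ and $|u_\va|\le C$ on $\pa B(x_0,4r)\cap\om$ (from the near-field step). A boundary/interior $L^\infty$ estimate for the homogeneous equation---obtained via the H\"older estimate \eqref{Holder interior estimates} coupled with zero data on $\pa\om$ and bounded data on $\pa B(x_0,4r)\cap\om$, iterated across dyadic annuli---yields $\|u_\va\|_{L^\infty(\om\setminus\om(x_0,4r))}\le C$. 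The principal obstacle is the near-field step: a direct application of \eqref{infty interior estimates} at the original scale contributes a term of order $\|u_\va\|_{L^2(\om)}/r$ that blows up as $r\to 0$, and the rescaling only succeeds because the two-dimensional coincidence $\|\tilde G\|_{L^2}\le C$ (equivalently, the mean-zero atom is representable as the divergence of an $L^2$ field with norm independent of $r$) allows the divergence-form $L^\infty$ estimate to apply with uniform constants, giving the constant in \eqref{L^{infty} estimates with atom} that is genuinely independent of both $r$ and $x_0$.
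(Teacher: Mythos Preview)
Your near-field step contains the decisive gap. After rescaling by $R=8r$, you need the local bound $\|\tilde u_\va\|_{L^2_{\operatorname{avg}}(\tilde\om(0,1))}\le C$ to feed into \eqref{infty interior estimates}, and you propose to get it from the ``scale-invariant energy estimate together with Poincar\'e using the distant boundary $\pa\tilde\om$''. The gradient energy \emph{is} scale-invariant in two dimensions, $\|\nabla\tilde u_\va\|_{L^2(\tilde\om)}=\|\nabla u_\va\|_{L^2(\om)}\le C$, but Poincar\'e on $\tilde\om$ carries the constant $\operatorname{diam}(\tilde\om)\sim 1/R$, so you only obtain $\|\tilde u_\va\|_{L^2(\tilde\om)}\le C/R$. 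Equivalently, coercivity of the rescaled operator controls $\|\tilde u_\va\|_{L^2}^2$ with the factor $R^2\lambda$, again yielding only $\|\tilde u_\va\|_{L^2}\le C/R$. This is precisely the endpoint failure you identify in your closing paragraph, and the Bogovski\u{\i} rewrite $\tilde a=\operatorname{div}(\tilde G)$ with $\|\tilde G\|_{L^2}\le C$ does not cure it: the divergence-form data controls $\nabla\tilde u_\va$ uniformly but says nothing new about the zeroth-order $L^2$ norm at unit scale. Your far-field step has a second problem: propagating an $L^\infty$ bound from $\pa B(x_0,4r)\cap\om$ outward through the homogeneous system is a maximum-principle statement, which is not available for systems under the mere $\operatorname{VMO}$ hypothesis of the lemma (the paper's Agmon--Miranda principle, Theorem~\ref{Nontangential maximal function estimates}, needs H\"older coefficients).

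The paper's proof avoids any near/far decomposition and instead works globally with exponents $p>2$. It uses two complementary estimates: (i) the global $W^{1,p}$ bound $\|\nabla u_\va\|_{L^p(\om)}\le C\|a\|_{L^{2p/(p+2)}}\le C\rho^{2/p-1}$ from Lemma~\ref{ww5}, and (ii) the dual estimate \eqref{Dual estimates}, $\|u_\va\|_{L^{2p/(p-2)}(\om)}\le C\|a\|_{\dot W^{-1,p'}(\om)}$, together with the generalized Morrey inequality \eqref{Generalized Morrey inequality} and the atom cancellation to obtain $\|a\|_{\dot W^{-1,p'}}\le C\rho^{1-2/p}$. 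Then at any $z\in\om$ one writes $|u_\va(z)|\le |u_\va(z)-\overline{u}_{z,\rho}|+|\overline{u}_{z,\rho}|$; the first term is $\le C\rho^{1-2/p}\|\nabla u_\va\|_{L^p}\le C$ by \eqref{Generalized Morrey inequality}, and the second is $\le C\rho^{2/p-1}\|u_\va\|_{L^{2p/(p-2)}}\le C$. The ingredient you are missing is the dual estimate \eqref{Dual estimates}: it supplies exactly the control of the \emph{average} of $u_\va$ (at scale $\rho$) that your rescaling argument cannot produce.
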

\begin{proof}
For atom function $ a $, we can assume that
\begin{align}
\operatorname{supp}(a)\subset\om(x_0,\rho)\quad\text{and}\quad\left\|a\right\|_{\infty}\leq\frac{1}{|\om(x_0,\rho)|}\nonumber
\end{align}
with $ x_0\in\om $ and $ 0<\rho<\operatorname{diam}(\om) $. Fix $ z\in\om $, we can choose $ p>2 $. Using the Sobolev embedding theorem, we have
$$
|u(z)| \leq  |u(z)-\overline{u}_{z,\rho}|+|\overline{u}_{z,\rho}|
 \leq C\left\{\rho^{1-\frac{2}{p}}\left\|\nabla u_{\va}\right\|_{L^p(\om)}+\rho^{\frac{2}{p}-1}\left\|u_{\va}\right\|_{L^{\frac{2p}{p-2}}(\om)}\right\}.
$$
From the $ W^{1,p} $ estimates and $ \eqref{Dual estimates} $, we have
\begin{align}
\left\|\nabla u_{\va}\right\|_{L^p(\om)}&\leq C\left\|a\right\|_{L^{\frac{2p}{p+2}}(\om)}\leq C\rho^{\frac{2}{p}-1}, \nonumber\\
\left\|u_{\va}\right\|_{L^{\frac{2p}{p-2}}(\om)}&\leq C\left\|a\right\|_{\dot{W}^{-1,p'}(\om)}.\nonumber
\end{align}
Then, we only need to prove that $ \left\|a\right\|_{\dot{W}^{-1,p'}(\om)}\leq \rho^{1-\frac{2}{p}} $. It is because for all $ v\in \dot{W}_0^{1,p}
(\om;\mathbb{R}^m) $, we have
\begin{align}
\left|\int_{\om}a^{\alpha}v^{\alpha}dy\right|\leq\left|\int_{\om}a^{\alpha}(v^{\al}-\overline{v}_{x,\rho}^{\al})dy\right|\leq C\rho^{1-\frac{2}{p}}\left\|\nabla v\right\|_{L^p(\om)},\nonumber
\end{align}
where we have used $ \eqref{Generalized Morrey inequality}$. This completes the results.
\end{proof}
%==============================================================
\begin{defn}[Hardy space] Let $ \om $ is a $ C^1 $ domain in $ \mathbb{R}^d $. A function $ f $ is an element in the Hardy space $ \mathcal{H}^1 $, if there exist a sequence of atoms $ \left\{a_i\right\}_{i=1}^{\infty} $ and a sequence of real numbers $ \left\{\lambda_{i}\right\}\in l^1 $ such that $ f=\sum_{i=1}^{\infty}\lambda_ia_i $. We define the norm in this space as
\begin{align}
\left\|f\right\|_{\mathcal{H}^1(\om)}=\inf\left\{\sum_{i=1}^{\infty}|\lambda_i|:f=\sum_{i=1}^{\infty}\lambda_ia_i\right\}.\nonumber
\end{align}
We notice the expression
\begin{align}
\sup\left\{\int_{\om}a(y)u(y)dy:a\text{ is an atom in }\om\right\},\nonumber
\end{align}
gives the equivalent norm of $ \operatorname{BMO}(\om) $. This is because that $ \operatorname{BMO}(\om) $ space can be regarded as the dual space of $ \mathcal{H}^1 $.
\end{defn}

Next, we will give a vital property for $ \BMO $ space. For the complete proof of the theorem, we refer to Corollary 6.22 of \cite{MG}.

\begin{thm}
Let $ u\in \BMO (\om) $ with $ \om $ being a bounded domain in $ \mathbb{R}^d $, then $ u\in L^{p}(\om)$ for all $ 1\leq p<\infty $ and there is a $ C $ depending only on $ p $ and $ d $, such that
\begin{align}
\left(\dashint_{\om(x_0,r)}|u-\overline{u}_{x_0,r}|^{p} d x\right)^{\frac{1}{p}} \leq C\left\|u\right\|_{*},\label{Corollary 6.22}
\end{align}
for any $ x_0\in\om $, $ 0<r<\diam(\om) $.
\end{thm}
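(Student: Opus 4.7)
The plan is to follow the classical John-Nirenberg argument, adapted to the modified mean $\overline{u}_{x_0,r}$ from $\eqref{ww35}$, and then upgrade the resulting exponential decay of the distribution function to an $L^p$ bound via $\int_0^\infty p\lambda^{p-1}|\{|u-\overline u_{x_0,r}|>\lambda\}|\,\ud\lambda$. I would split into the interior case $r<\dist(x_0,\pa\om)$, where $\om(x_0,r)=B(x_0,r)$ and $\overline u_{x_0,r}$ is the honest average, and the boundary case $r\geq\dist(x_0,\pa\om)$, where $\overline u_{x_0,r}=0$.

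For the interior case I would run the standard Calder\'on-Zygmund stopping-time scheme. Writing $\alpha=\|u\|_*$, $B_0=B(x_0,r)$, and fixing a threshold $t>2^d\alpha$, I dyadically subdivide $B_0$ and stop at maximal sub-cubes $\{Q_{1,j}\}$ on which the mean of $|u-u_{B_0}|$ exceeds $t$. The stopping rule gives $|u_{Q_{1,j}}-u_{B_0}|\leq 2^d t$, and $\sum_j|Q_{1,j}|\leq(\alpha/t)|B_0|$ since the mean on $B_0$ itself is $\leq\alpha$. Iterating inside each $Q_{1,j}$ with the same threshold produces $k$-th generation cubes $Q_{k,j}$ with $|u_{Q_{k,j}}-u_{B_0}|\leq 2^d k\,t$ and $\sum_j|Q_{k,j}|\leq(\alpha/t)^k|B_0|$; outside their union, Lebesgue differentiation forces $|u-u_{B_0}|\leq 2^d k\,t$ a.e. Choosing $t=2^{d+1}\alpha$ produces
\begin{equation*}
|\{x\in B_0:|u(x)-u_{B_0}|>\lambda\}|\leq C_1 e^{-C_2\lambda/\alpha}|B_0|,
\end{equation*}
and integration against $p\lambda^{p-1}$ gives the desired $L^p$ bound with constant depending only on $p$ and $d$.

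For the boundary case $r\geq\dist(x_0,\pa\om)$, the definition of $\|u\|_*$ directly furnishes $\dashint_{\om(x_0,r)}|u|\leq\alpha$, so only the jump from $L^1$ to $L^p$ remains. I would rerun the stopping-time decomposition on $\om(x_0,r)$ using a Whitney-type family of sub-pieces that are either interior balls (fully in $\om$, with honest averages) or boundary-anchored balls centered at $y\in\overline\om$ with $\rho\geq\dist(y,\pa\om)$, whose reference mean is $0$ by $\eqref{ww35}$. In either situation the defining inequality of $\|u\|_*$ controls the jump of the running reference at each stopping step by a fixed multiple of $\alpha$, so the same geometric decay $(\alpha/t)^k|\om(x_0,r)|$ propagates and we obtain the same exponential distribution estimate for $|u|=|u-\overline u_{x_0,r}|$, hence the $L^p$ bound.

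The main obstacle I anticipate is exactly the boundary case: one must set up the Calder\'on-Zygmund subdivision of $\om(x_0,r)$ so that each generated child is either an interior ball (where $\overline u_{y,\rho}$ is the usual mean and the BMO hypothesis applies verbatim) or a boundary-anchored piece with reference mean $0$, and so that the change of the running reference is bounded purely in terms of $\alpha$, with constants depending only on $d$. This forces a Whitney dyadic structure relative to $\pa\om$ together with the doubling property for boundary pieces; the interior case is essentially the textbook John-Nirenberg argument and presents no new difficulty.
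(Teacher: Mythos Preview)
The paper does not prove this statement at all; it simply cites Corollary~6.22 of Giaquinta--Martinazzi~\cite{MG} and moves on. So there is no ``paper's proof'' to compare against beyond that reference, which is the classical John--Nirenberg inequality.

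Your plan is exactly the standard route that such a reference takes: Calder\'on--Zygmund stopping time to get exponential decay of the distribution function, then integrate $p\lambda^{p-1}$. The interior case $r<\dist(x_0,\pa\om)$ is the textbook argument and your sketch is correct. For the boundary case $r\geq\dist(x_0,\pa\om)$ you correctly isolate the only real issue: one must design the subdivision so that each child is either a genuine interior ball (honest average) or a boundary-anchored piece whose reference value is $0$ by Definition~\ref{BMO atom}, with parent-to-child volume ratios and reference-jump bounds depending only on $d$. Your Whitney-relative-to-$\pa\om$ idea is the right picture, but as written it is a plan rather than a proof: you have not specified the stopping family on $\om(x_0,r)$, nor checked that on a general bounded domain (no regularity is assumed in the statement) the needed doubling and overlap constants are dimensional. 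This is precisely the content that the paper outsources to \cite{MG} (and implicitly to \cite{Taylor}, where this modified $\BMO$ is treated); if you want a self-contained argument, that geometric lemma is what you must actually write down.
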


\begin{proof}[Proof of Theorem \ref{Green}]
Here we follow the proof in \cite{Dong2}, \cite{Dong1}  and \cite{Taylor} to construct the Green function and get its pointwise estimates. For $ y\in \om $, there exists a matrix valued function $ G_{\rho,\va}^{\alpha\beta}(\cdot,y):\om\to\mathbb{R}^{m\times m} $, such that for any $ 1\leq\gamma\leq m $, $ u_{\rho}=(u_{\rho}^{\delta})=(G_{\rho,\va}^{\delta\gamma}(\cdot,y))\in W_0^{1,2}(\om;\mathbb{R}^m) $ satisfies
\begin{align}
\langle\mathcal{L}_{\va}(u_{\rho}),\varphi\rangle=\langle\mathcal{L}_{\va}(G_{\rho,\va}^{\delta\gamma}(\cdot,y)),\varphi^{\delta}\rangle=\dashint_{\om(y,\rho)}\varphi^{\gamma}dx,\quad\forall\varphi\in W_0^{1,2}(\om;\mathbb{R}^m).\nonumber
\end{align}
Here, we call $ G_{\rho,\va}^{\alpha\beta}(\cdot,y) $ is the average Green matrix for the operator $ \mathcal{L}_{\va} $. Then for all atom functions in $ \om $ denoted as $ a $ such that
\begin{align}
\operatorname{supp}(a)\subset\om(y,\rho)\quad\text{and}\quad\left\|a\right\|_{\infty}\leq\frac{1}{|\om(y,\rho)|},\nonumber
\end{align}
we can obtain $ v_{\va}\in W_0^{1,2}(\om;\mathbb{R}^m) $ such that $ \mathcal{L}_{\va}^*(v_{\va})=a $.  Because of the properties of dual operators, we have
\begin{align}
\dashint_{\om(y,\rho)}v_{\va}^{\gamma}dx=\langle \mathcal{L}_{\va}(u_{\rho}),v_{\va}\rangle=\langle u_{\rho},\mathcal{L}_{\va}^*(v_{\va})\rangle=\int_{\om}G_{\rho,\va}^{\alpha\gamma}(\cdot,y)a^{\alpha}(\cdot).\nonumber
\end{align}
From $ \eqref{L^{infty} estimates with atom} $, we have $ \left\|v_{\va}\right\|_{\infty}\leq C $, where $ C $ depends only on $ \mu,\omega(t),\kappa,m,\lambda $ and $ \om $. Then we have
\begin{align}
\left|\int_{\om}G_{\rho,\va}(x,y)a(x)dx\right|\leq C.\nonumber
\end{align}
According to the fact that $ \mathcal{H}^1 $ is the dual space of $ \operatorname{BMO} $ space, we can derive that $ G_{\rho,\va}(\cdot,y) $ has a uniform boundedness $ C $ in $ \operatorname{BMO} $ space, where $ C $ depends only on $ \mu,\omega(t),\kappa,m,\lambda $ and $ \om $. From Banach-Alaoglu theorem, we have, for all $ y\in\om $, there exists a sequence $ \rho_j $ such that $ \rho_j\to 0 $ and functions $ G_{\va}^{\alpha\beta}(\cdot,y)\in \operatorname{BMO}(\om) $ such that $ G_{\rho_j,\va}^{\alpha\beta}(\cdot,y) $ converge $ G_{\va}^{\alpha\beta}(\cdot,y) $ in $ \operatorname{BMO}(\om) $ space in the sense of the weak-* topology. For $ F\in L^q(\om;\mathbb{R}^m) $ where $ q>1 $, we can choose $ 1<q_1<q $, $ p=\frac{2q_1}{2-q_1}>2 $ and $ u_{\va}\in W_0^{1,p}(\om;\mathbb{R}^m) $, such that $ \mathcal{L}_{\va}^*(u_{\va})=F $. Then, we have
\begin{align}
\dashint_{\om(y,\rho)}u_{\va}(x)dx=\int_{\om}G_{\rho,\va}(x,y)F(x)dx,\label{ww18}
\end{align}
and $ \left\|u_{\va}\right\|_{W^{1,p}(\om)}\leq C\left\|F\right\|_{L^q(\om)} $, where $ C $ is a constant depending only on $ \mu,\omega(t),\kappa,m,\lambda,p,q $. Using Sobolev embedding theorem, we see that $ u_{\va} $ is a continuous function. Letting $ \rho_j\to 0 $, the left side of $ \eqref{ww18} $ converges to $ u_{\va}(y) $. On the other hand, for the right-hand side, according to $ L^p\subset \mathcal{H}^1 $, we have
\begin{align}
u_{\va}(y)=\int_{\om}G_{\va}(x,y)F(x)dx.\label{ww19}
\end{align}
Of course, this is still a certain distance from the representation theorem. Next, we will prove the representation theorem and the uniqueness of the Green function. Set $ G_{\widetilde{\rho},\va}^{*\alpha\beta}(\cdot,x) $ is the average Green matrix for $ \mathcal{L}_{\va}^* $. Then, by the definition of the average Green matrix for $ \mathcal{L}_{\va} $, we have
\begin{align}
&\dashint_{\om(y,\rho)}G_{\widetilde{\rho},\va}^{*\alpha\beta}(z,x)dz=\langle \mathcal{L}_{\va}(G_{\rho,\va}^{\alpha}(\cdot,y)),G_{\widetilde{\rho},\va}^{*\beta}(\cdot,x)\rangle=\langle G_{\rho,\va}^{\alpha}(\cdot,y),\mathcal{L}_{\va}^*(G_{\widetilde{\rho},\va}^{*\beta}(\cdot,x))\rangle=\dashint_{\om(x,\widetilde{\rho})}
G_{\rho,\va}^{\beta\alpha}(z,y)dz.\nonumber
\end{align}
Choose $ \rho_n $ and $ \widetilde{\rho}_n $ are two sequences such that the two average Green matrices converge. Letting $ n\to\infty $ at the same time we have
\begin{align}
G_{\va}^{*\alpha\beta}(y,x)=G_{\va}^{\beta\alpha}(x,y)\label{adjoint p}
\end{align}
and the representation theorem can be obtained by this and $ \eqref{ww19} $. It is easy to verify the uniqueness of the Green function. If we assume that $ \widetilde{G}_{\va}(x,y) $ is another Green function that satisfies the properties above, we can take $ F\in C_0^{\infty}(\om) $ and the Dirichlet problem $ \mathcal{L}_{\va}(u_{\va})=F $ in $ \om $ with $ u_{\va}=0 $ on $ \pa\om $ has the unique solution (taking by the representation theorem) $ u_{\va}(x)=\int_{\om}G_{\va}(x,y)F(y)dy $ and $ u_{\va}(x)=\int_{\om}\widetilde{G}_{\va}(x,y)F(y)dy $. Then we have
\begin{align}
\int_{\om}(G_{\va}(x,y)-\widetilde{G}_{\va}(x,y))F(y)dy=0.\nonumber
\end{align}
According to the arbitrariness of $ F $, the uniqueness of the Green function can be obtained.

Finally, we will prove the pointwise estimates for the Green functions. Namely, we will prove $ \eqref{preliminary} $-$ \eqref{Pointwise estimates for Green functions 6} $. Letting $ x_0,y_0\in\om $ and assuming that $ \delta(x_0)<\frac{1}{2}|x_0-y_0|=\frac{1}{2}r $, we have $ \om(x_0,\frac{1}{2}r)\subset\om\backslash\left\{y_0\right\} $. According to the definition of $ G_{\va}^{\cdot\gamma}(\cdot,y_0) $, we have $ \mathcal{L}_{\va}(G_{\va}^{\cdot\gamma}(\cdot,y_0))=0 $ in $ \om(x_0,\frac{1}{2}r) $ and $ G_{\va}^{\cdot\gamma}(\cdot,y_0)=0 $ on $ \om\cap B(x_0,\frac{1}{2}r) $. Then, by using $\eqref{infty interior  estimates}$, we have
\begin{align}
|G(x_0,y_0)|\leq C\left\|G(\cdot,y_0)\right\|_{L^{\infty}(\om(x_0,\frac{r}{4}))}\leq C\dashint_{\om(x_0,\frac{1}{2}r)}|G(z,y_0)|dz.\nonumber
\end{align}
According to the definition of $ \operatorname{BMO}(\om) $ and noticing that $ \overline{G(z,y_0)}_{x_0,\frac{r}{2}}=0 $ by $ \eqref{ww35} $, we have
\begin{align}
|G(x_0,y_0)|\leq C\dashint_{\om(x_0,\frac{r}{2})}|G(z,y_0)-\overline{G(z,y_0)}_{x_0,\frac{r}{2}}|dz\leq C\left\|G(\cdot,y_0)\right\|_{*}\leq C,\nonumber
\end{align}
where $ C $ depends only on $ \mu,\omega(t),\kappa,m,\lambda $ and $ \om $. Then for $ \delta(x)<\frac{1}{2}|x-y| $, we can obtain
\begin{align}
|G_{\va}(x,y)|\leq C,\label{Boundedness estimates}
\end{align}
where $ C $ depends only on $ \mu,\kappa,m,\lambda,\omega(t) $ and $ \om $. Assume that $ \delta(x_0)<\frac{1}{4}|x_0-y_0|=\frac{1}{4}r $, and $ z_0 $ is chosen such that $ |x_0-z_0|=\delta(x_0) $, we have that $ \mathcal{L}_{\va}(G_{\va}^{\cdot\gamma}(\cdot,y_0))=0 $ in $ \om(x_0,\frac{r}{2}) $ and $ G_{\va}^{\cdot\gamma}(\cdot,y_0)=0 $ on $ \pa\om\cap B(z_0,\frac{r}{2}) $. According to the localized boundary H\"{o}lder estimates, $\eqref{Holder interior estimates}$, we have, for all $ \sigma_1\in (0,1) $ by letting $ v_{\va}(x)=G_{\va}^{\cdot\gamma}(x,y_0) $, there is
\begin{align}
|v_{\va}(x_0)|&= |v_{\va}(x_0)-v_{\va}(z_0)|+|v_{\va}(z_0)| 
 \leq |x_0-z_0|^{\sigma_1}[u_{\va}]_{C^{0,\sigma_1}(\om(z_0,\frac{3r}{8}))}\nonumber\\
&\leq C\left(\frac{[\delta(x_0)]}{r}\right)^{\sigma_1}\left(\dashint_{\om(z_0,\frac{r}{2})}|v_{\va}|^2dx\right)^{\frac{1}{2}}.\nonumber
\end{align}
Here, we notice that for all $ x\in \om(z_0,\frac{r}{2}) $, $ \delta(x)<\frac{1}{2}r $. Then by using $ \eqref{Boundedness estimates} $, we have
\begin{align}
|v_{\va}(x_0)|\leq C\left(\frac{[\delta(x_0)]}{r}\right)^{\sigma_1}\leq C\frac{[\delta(x_0)]^{\sigma_1}}{|x_0-y_0|^{\sigma_1}},\nonumber
\end{align}
which proves $ \eqref{Pointwise estimates for Green functions 1} $. On the other hand, the second inequality $ \eqref{Pointwise estimates for Green functions 2} $ can be obtained naturally according to the Green function of the adjoint operator. In addition, we can see from the above proof that $ \frac{1}{4} $ is not an essential constant in $ \eqref{Pointwise estimates for Green functions 1} $-$ \eqref{Pointwise estimates for Green functions 2} $. That is, $ \eqref{Pointwise estimates for Green functions 1} $-$ \eqref{Pointwise estimates for Green functions 2} $ are also true if we change $ \frac{1}{4} $ to any constants $ 0<C_0<\frac{1}{2} $ by using same arguments. Next, let us prove $ \eqref{Pointwise estimates for Green functions 3} $. Firstly, we can assume that $ \delta(x_0)<\frac{1}{4}|x_0-y_0| $ and $ \delta(y_0)<\frac{1}{4}|x_0-y_0| $. Otherwise, $ \eqref{Pointwise estimates for Green functions 3} $ follows from $ \eqref{Pointwise estimates for Green functions 1} $-$ \eqref{Pointwise estimates for Green functions 2} $ directly. Using the same arguments, we can obtain
\begin{align}
|v_{\va}(x_0)|&\leq C\left(\frac{[\delta(x_0)]}{r}\right)^{\sigma_1}\left(\dashint_{\om(z_0,\frac{7r}{16})}|v_{\va}|^2dx\right)^{\frac{1}{2}}.\nonumber
\end{align}
For all $ y\in B(z_0,\frac{7r}{16})\cap\om $, $ |y-y_0|\geq |x_0-y_0|-|x_0-y|\geq\frac{9}{16}r $ and $ \delta(y_0)\leq\frac{1}{4}|x_0-y_0|=\frac{1}{4}r\leq \frac{4}{9}r $. So according to the above description, we can also obtain that
\begin{align}
|v_{\va}(y)|\leq\frac{C[\delta(y_0)]^{\sigma_2}}{|y-y_0|^{\sigma_2}}\text{ for any }y\in \om(z_0,\frac{7r}{16}).\label{ww20}
\end{align}
Then in view of $ \eqref{ww20} $,
\begin{align}
|v_{\va}(x_0)|&\leq C\left(\frac{[\delta(x_0)]}{r}\right)^{\sigma_1}\left(\dashint_{B(z_0,\frac{7r}{16})\cap\om}\frac{[\delta(y_0)]^{2\sigma_2}}{|y-y_0|^{2\sigma_2}}dx\right)^{\frac{1}{2}}\leq\frac{C[\delta(x_0)]^{\sigma_1}[\delta(y_0)]^{\sigma_2}}{|x_0-y_0|^{\sigma_1+\sigma_2}}.\nonumber
\end{align}

At last, when $ \delta(x_0)\geq\frac{1}{4}|x_0-y_0| $ and $ \delta(y_0)\geq\frac{1}{4}|x_0-y_0| $, we can choose $ F\in C_0^{\infty}(\om(x_0,\frac{r}{4});\mathbb{R}^m) $. Obviously, here, $ \om(x_0,\frac{r}{4})=B(x_0,\frac{r}{4}) $. Let $ w_{\va} $ satisfies $ \mathcal{L}_{\va}^*(w_{\va})=F $ in $ \om $ and $ w_{\va}=0 $ on $ \pa\om $. From the representation theorem, we can easily obtain that $ w_{\va}(y)=\int_{\om}G_{\va}(z,y)F(z)dz $. Since $ F\equiv 0 $ in $ \om\backslash\om(x_0,\frac{r}{4}) $, we have, $ \mathcal{L}_{\va}^*(w_{\va})=0 $ in $ \om\backslash\om(x_0,\frac{r}{4}) $. For all $ p>2 $ and $ 1<q<2 $, by using the $ W^{1,p} $ estimates $ \eqref{W^{1,p} interior estimates} $ and Sobolev embedding theorem, we have
\begin{align}
|w_{\va}(y_0)|&\leq  C\left(\dashint_{\om(y_0,\frac{r}{4})}|w_{\va}(z)|^2dz\right)^{\frac{1}{2}}\leq C\left(\dashint_{\om(y_0,\frac{r}{4})}|w_{\va}(z)|^pdz\right)^{\frac{1}{p}}\nonumber\\
&=Cr^{-\frac{2}{p}}\left(\int_{\om}|\nabla w_{\va}(z)|^2dz\right)^{\frac{1}{2}}\leq Cr^{-\frac{2}{p}}\left\|F\right\|_{L^q(\om)}\leq Cr^{-\frac{2}{p}+\frac{2}{q}-1}\left(\int_{\om(x_0,\frac{r}{4})}|F|^2dz\right)^{\frac{1}{2}}.\nonumber
\end{align}
This implies that
\begin{align}
\left|\int_{\om(x_0,\frac{r}{4})}G_{\va}(z,y_0)F(z)dz\right|\leq Cr^{-\frac{2}{p}+\frac{2}{q}-1}\left(\int_{\om(x_0,\frac{r}{4})}|F|^2dz\right)^{\frac{1}{2}}.\label{ww21}
\end{align}
From   \eqref{ww21}, we have
\begin{align}
\left(\dashint_{\om(x_0,\frac{r}{4})}|G_{\va}(z,y)|^2dz\right)^{\frac{1}{2}}\leq Cr^{-\frac{2}{p}+\frac{2}{q}-2},\label{ww22}
\end{align}
by using the duality arguments. For all $ \sigma\in(0,1) $, we can choose special $ p,q $ such that $ -\frac{2}{p}+\frac{2}{q}-2=-\sigma $. $ \eqref{ww21} $, together with $ \eqref{infty interior estimates} $ gives the proof of $ \eqref{preliminary} $. Of course, there is still a certain distance between this and $ \eqref{Pointwise estimates for Green functions 4} $, so we need to make more precise estimates.

For $ x_0,y_0\in\om $, letting $ r_1=\frac{1}{2}|x_0-y_0| $, we see that if $ \delta(x_0)<\frac{1}{2}|x_0-y_0| $, according to $ \eqref{Boundedness estimates} $, we can obtain $ |G_{\va}(x_0,y_0)|\leq C $. If $ \delta(x_0)\geq\frac{1}{2}|x_0-y_0| $, we can consider the sequence of subsets of $ \om $ denoted as $ \om_{j}=\om(x_0,2^jr_1) $ with $ j=0,1,...,N $ such that $ 2^Nr_1\geq\operatorname{diam}(\om) $. Note that $ N\leq C\left(1+\ln\left(\frac{\operatorname{diam}(\om)}{|x_0-y_0|}\right)\right) $. According to the fact that $ G_{\va}(\cdot,y_0)\in \operatorname{BMO}(\om) $, we obtain
\begin{align}
\left|\dashint_{\om_j}G_{\va}(x,y_0)-\dashint_{\om_{j+1}}G_{\va}(x,y_0)dx\right|\leq C\left\|G_{\va}(\cdot,y_0)\right\|_{*}\leq C.\label{Average difference estimates}
\end{align}
With the choice of $ N $, we get
\begin{align}
\dashint_{\om_N}|G_{\va}(x,y_0)|dx=\dashint_{\om}|G_{\va}(x,y_0)|dx\leq C\left\|G_{\va}(\cdot,y_0)\right\|_{*}\leq C.\label{Large scale integral}
\end{align}
Setting $ J=\dashint_{\om_0}G_{\va}^{\cdot\gamma}(x,y_0)dx $ and $ \theta_{\va}=G_{\va}(x,y_0)-J $, we have
\begin{align}
\mathcal{L}_{\va}(\theta_{\va})=\operatorname{div}(V(x/\va)J)+(-c(x/\va)J-\lambda J)\quad\text{in }B(x_0,r_1).\nonumber
\end{align}
In the view of $ \eqref{infty interior estimates} $, we can choose $ p>2 $, $ q=\frac{2p}{p+2} $, $ f=V(x/\va)J $ and $ F=-c(x/\va)J-\lambda J $. Then, we can obtain
\begin{align}
|\theta_{\va}(x_0)|&\leq  C\left\{\dashint_{B(x_0,r_1)}|\theta_{\va}|dx+r_1\left(\dashint_{B(x_0,r_1)}|f|^pdx\right)^{\frac{1}{p}}+r_1^2\left(\dashint_{B(x_0,r_1)}|F|^qdx\right)^{\frac{1}{q}}\right\}\nonumber\\
&\leq C\left\{\dashint_{B(x_0,r_1)}\left|G_{\va}(x,y_0)-\dashint_{\om_0}G_{\va}(z,y_0)dz\right|dx+|J|r_1+|J|r_1^2\right\},\nonumber\\
&\leq C(1+|J|r_1+|J|r_1^2).\label{The second important estimates}
\end{align}
The third inequality in $ \eqref{The second important estimates} $ is derived from $ \eqref{Average difference estimates} $ and
\begin{align}
\dashint_{B(x_0,r_1)}\left|G_{\va}(x,y_0)-\dashint_{\om_0}G_{\va}(z,y_0)dz\right|dx\leq C\left\|G_{\va}(\cdot,y_0)\right\|_{*}\leq C.\nonumber
\end{align}
Then we can see that
\begin{align}
\left|G_{\va}(x_0,y_0)-\dashint_{\om_0}G_{\va}(z,y_0)dz\right|\leq C(1+|J|r_1+|J|r_1^2).\nonumber
\end{align}
According to $ \eqref{Average difference estimates} $ and $\eqref{Large scale integral} $, we have
\begin{align}
|G_{\va}(x_0,y_0)|&\leq  C\sum_{j=-1}^{N}\left|\dashint_{\om_j}G_{\va}(x,y_0)dx-\dashint_{\om_{j+1}}G_{\va}(x,y_0)dx\right|+\dashint_{\om_N}|G_{\va}(x,y_0)|dx\nonumber\\
&\leq  C\left( 1+\ln\left(\frac{\operatorname{diam}(\om)}{|x_0-y_0|}\right)\right)+C\left|G_{\va}(x_0,y_0)-\dashint_{\om_0}G_{\va}(x,y_0)dx\right|,\nonumber
\end{align}
where we denote $ \dashint_{\om_{-1}}G_{\va}(x,y_0)=G_{\va}(x_0,y_0) $, and $ C $ depends only on $ \mu,\omega(t),\kappa,\lambda,m,p $. Set $ \sigma\in (0,1) $, by   $ \eqref{preliminary} $, we get
\begin{align}
|J|&= \left|\dashint_{\om_0}G_{\va}(x,y_0)dx\right|\leq\dashint_{\om_0}|G_{\va}(x,y_0)|dx\leq C\dashint_{B(x_0,r_1)}|x-y|^{-\sigma}dx\leq Cr_1^{-\sigma}.\nonumber
\end{align}
From \eqref{The second important estimates}, we can obtain $ |\theta_{\va}(x_0)|\leq C(1+r_1^{1-\sigma}+r_1^{2-\sigma})\leq C $ since $ r_1\leq\operatorname{diam}(\om) $. Then
\begin{align}
\left|G_{\va}(x_0,y_0)-\dashint_{\om_0}G_{\va}(x,y_0)dx\right|\leq C.\nonumber
\end{align}
Therefore, we can obtain the proof of $ \eqref{Pointwise estimates for Green functions 4} $, that is $ |G_{\va}(x,y)|\leq C\left(1+\ln\left(\frac{\operatorname{diam}(\om)}{|x-y|}\right)\right) $, where $ C $ depends only on $ \mu,\omega(t),\kappa,\lambda,m,p,\om $. Finally, we need to prove $ \eqref{Pointwise estimates for Green functions 5} $-$ \eqref{Pointwise estimates for Green functions 6} $. We only need to prove $ \eqref{Pointwise estimates for Green functions 5} $ and $ \eqref{Pointwise estimates for Green functions 6} $ follows directly from $ \eqref{adjoint p} $. For $ 0<\sigma_3<1 $, $ x_0,y_0,z_0\in\om $ such that $ |x_0-z_0|<\frac{1}{2}|x_0-y_0| $, we can define $ r_2=|x_0-y_0| $ and have
\begin{align}
|G_{\va}(x_0,y_0)-G_{\va}(z_0,y_0)|&\leq[G_{\va}(x,y_0)]_{C^{0,\sigma_3}(\om(x_0,\frac{1}{2}r))}|x_0-z_0|^{\sigma_3}.\nonumber
\end{align}
If $ \delta(x_0)<\frac{2}{3}r_2 $, noticing that $ \mathcal{L}_{\va}(G_{\va}^{\cdot\gamma}(x,y_0))=0 $ in $ \om(x_0,\frac{2}{3}r_2) $ and $ G_{\va}^{\cdot\gamma}(\cdot,y_0)=0 $ on $ \pa\om\cap B(x_0,\frac{2}{3}r_2) $, we get
\begin{align}
|G_{\va}(x_0,y_0)-G_{\va}(z_0,y_0)|&\leq [G_{\va}(x,y_0)]_{C^{0,\sigma_3}(\om(x_0,\frac{1}{2}r_2))}|x_0-z_0|^{\sigma_3}\nonumber\\
&\leq C|x_0-z_0|^{\sigma_3}r_2^{-\sigma_3}\left(\dashint_{\om(x_0,\frac{2}{3}r_2)}|G_{\va}(x,y_0)|^2dx\right)^{\frac{1}{2}}\nonumber\\
&\leq C|x_0-z_0|^{\sigma_3}r_2^{-\sigma_3}\left(\dashint_{\om(x_0,\frac{2}{3}r)}|G_{\va}(x,y_0)-\overline{G_{\va}(x,y_0)}_{x_0,\frac{2}{3}r_2}|^2dx\right)^{\frac{1}{2}}\nonumber\\
&\leq \frac{C|x_0-z_0|^{\sigma_3}}{|x_0-y_0|^{\sigma_3}},\label{ww36}
\end{align}
where we have used $ \eqref{Holder interior estimates} $, $ \eqref{Corollary 6.22} $ and $ \eqref{ww35} $. If $ \delta(x_0)>\frac{2}{3}r_2 $, we can choose $ J_1=\overline{G_{\va}(x,y_0)}_{x_0,\frac{2}{3}r} $ and have
\begin{align}
\mathcal{L}_{\va}(G_{\va}^{\cdot\gamma}(x,y_0)-J_1)=\operatorname{div}(V(x/\va)J_1)-c(x/\va)J_1-\lambda J_1.\nonumber
\end{align}
From $ \eqref{preliminary} $ and the fact that $ r_2\leq\diam(\om) $, we can obtain
\begin{align}
r_2|J_1|+r_2^2|J_1|\leq C.\label{ww37}
\end{align}
From \eqref{Holder interior estimates}, we have
\begin{align}
[G_{\va}(x,y_0)]_{C^{0,\sigma_3}(\om(x_0,\frac{1}{2}r))}\leq Cr^{-\sigma_3}\left\{\left(\dashint_{\om(x_0,\frac{2}{3}r)}|G_{\va}(x,y_0)-J_1|^2dx\right)^{\frac{1}{2}}+r_2|J_1|+r_2^2|J_1|\right\}.\nonumber
\end{align}
This, together with $ \eqref{Corollary 6.22} $ and $ \eqref{ww37} $, we can obtain
\begin{align}
[G_{\va}(x,y_0)]_{C^{0,\sigma_3}(\om(x_0,\frac{1}{2}r))}\leq C|x_0-y_0|^{-\sigma_3}.\label{ww38}
\end{align}
Combining $ \eqref{ww37} $ and $ \eqref{ww38} $, we can obtain  $ \eqref{Pointwise estimates for Green functions 5} $ and finish the proof.
\end{proof}

\begin{rem}
From $ \eqref{Pointwise estimates for Green functions 1} $-$ \eqref{Pointwise estimates for Green functions 4} $, we can obtain that for any $ 0<\sigma,\sigma_1,\sigma_2<1 $,
\begin{align}
|G_{\va}(x,y)|\leq \frac{C}{|x-y|^{\sigma}}\min\left\{1,\frac{[\delta(x)]^{\sigma_1}}{|x-y|^{\sigma_1}},\frac{[\delta(y)]^{\sigma_2}}{|x-y|^{\sigma_2}}\frac{[\delta(x)]^{\sigma_1}[\delta(y)]^{\sigma_2}}{|x-y|^{\sigma_1+\sigma_2}}\right\},\label{ww26}
\end{align}
where $ C $ depends only on $ \sigma_1,\sigma_2,\sigma,\mu,\omega(t),\kappa,\lambda,m $ and $ \om $
\end{rem}

\begin{rem}
For $ R\leq\delta(y) $, $ B(y,R) $ being a ball with center $ y $, radius $ R $, and $ F\in C_{0}^{\infty}(\om;\mathbb{R}^m) $, we can consider $ \mathcal{L}_{\va}^*(u_{\va})=F $ in $ \om $ and $ u_{\va}=0 $ on $ \pa\om $. Then, there exists a unique $ u_{\va}\in H_0^1(\om;\mathbb{R}^m) $ such that
\begin{align}
u_{\va}(y)=\int_{\om}G_{\va}(x,y)F(x)dx.\nonumber
\end{align}
If $ F\in C_0^{\infty}(\om;\mathbb{R}^m) $ and $ \operatorname{supp}(F)\subsetneq B(y,R)\subsetneq\om $, then, by $ \eqref{infty interior estimates} $, we have
\begin{align}
\left|\int_{\om}G_{\va}(x,y)F(x)dx\right|\leq\left\|u_{\va}\right\|_{L^{\infty}(B(y,\frac{1}{4}R))}\leq C\left(\dashint_{B(y,\frac{1}{2}R)}|u_{\va}|^2dx\right)^{\frac{1}{2}}+CR^2\left(\dashint_{B(y,\frac{1}{2}R)}|F|^2dx\right)^{\frac{1}{2}}.\nonumber
\end{align}
Note that $ u_{\va}=0 $ on $ \pa\om $, we can use Poincar\'{e}'s inequality and obtain
\begin{align}
\left(\dashint_{B(y,\frac{1}{2}R)}|u_{\va}|^2dx\right)^{\frac{1}{2}}&\leq  CR^{-1}\left(\int_{B(y,\frac{1}{2}R)}|u_{\va}|^2dx\right)^{\frac{1}{2}}\leq  CR^{-1}\left(\int_{B(y,2\delta(y))\cap\om}|u_{\va}|^2dx\right)^{\frac{1}{2}}\nonumber\\
&\leq  C\delta(y)R^{-1}\left(\int_{B(y,2\delta(y))\cap\om}|\nabla u_{\va}|^2dx\right)^{\frac{1}{2}}\leq  C\delta(y)R^{-1}\left(\int_{\om}|\nabla u_{\va}|^2dx\right)^{\frac{1}{2}}.\nonumber
\end{align}
From Theorem \ref{energy}, we have
\begin{align}
\left(\dashint_{B(y,\frac{1}{2}R)}|u_{\va}|^2dx\right)^{\frac{1}{2}}\leq C\delta(y)R^{-1}\left(\int_{\om}|\nabla u_{\va}|^2dx\right)^{\frac{1}{2}}\leq C\delta(y)R^{-1}\left\|F\right\|_{H^{-1}(\om)}.\label{h-1}
\end{align}
On the other hand, since $ F\in L^2(\om;\mathbb{R}^m) $, we note that actually, $ \langle F,\varphi\rangle_{H^{-1}(\om)\times H_0^1(\om)}=\int_{\om}F(x)\varphi(x)dx $. Then, by using Poincar\'{e}'s inequality,
\begin{align}
\left\|F\right\|_{H^{-1}(\om)}&=\sup_{\|\varphi\|_{H_0^1(\om)}=1}\left|\int_{\om}F(x)\varphi(x)dx\right|\leq \|F\|_{L^2(B(y,R))}\|\varphi\|_{L^2(B(y,R))}\nonumber\\
&\leq CR\|F\|_{L^2(B(y,R))}\|\nabla\varphi\|_{L^2(B(y,R))}\leq CR\|F\|_{L^2(B(y,R))}.\nonumber
\end{align}
This, together with $ \eqref{h-1} $ implies that
\begin{align}
\left(\dashint_{B(y,\frac{1}{2}R)}|u_{\va}|^2dx\right)^{\frac{1}{2}}\leq CR\delta(y)\left(\dashint_{B(y,R)}|F|^2dx\right)^{\frac{1}{2}}.\nonumber
\end{align}
Then, $ \left|\int_{\om}G_{\va}(x,y)F(x)dx\right|\leq CR\delta(y)\left(\dashint_{B(y,R)}|F|^2dx\right)^{\frac{1}{2}} $, where $ C $ depends only on $ \mu,\kappa,\lambda,m,\omega(t) $ and $ \om $. Using the duality methods, we have
\begin{align}
\left(\dashint_{B(y,R)}|G_{\va}(x,y)|^2dx\right)^{\frac{1}{2}}\leq C\frac{\delta(y)}{R},\nonumber
\end{align}
where $ C $ depends only on $ \mu,\omega(t),\kappa,\lambda,m, $ and $ \om $. If $ \delta(y)\leq CR $, we have
\begin{align}
\left(\dashint_{B(y,R)}|G_{\va}(x,y)|^2dx\right)^{\frac{1}{2}}\leq C.\label{Big ball integration}
\end{align}
\end{rem}
%==============================================================
The following lemma is essential for the proof of Theorem \ref{Holder}. This lemma is from \cite{Shen1}. The author gave the proof for the elliptic operator without lower order terms. What is remarkable is that, in \cite{Shen1}, for the elliptic operator $ L_{\va} $ without lower order terms, the author does not give the proof for the case $ d=2 $.
\begin{lem}
Suppose that $ A $ satisfies $ \eqref{Ellipticity} $, $ \eqref{Periodicity} $ and $ \operatorname{VMO} $ condition $ \eqref{VMO condition} $. Other coefficients of $ \mathcal{L}_{\va} $, $ V,B,c $ satisfy $ \eqref{Boundedness} $. $ \om $ is a $ C^{1,\eta} $ $ (0<\eta<1) $ bounded domain in $ \mathbb{R}^2 $. Then, for all $ \sigma\in(0,1) $, we have
\begin{align}
\int_{\om}|\nabla_y G_{\va}(x,y)|[\delta(y)]^{\sigma-1}dy\leq C[\delta(x)]^{\sigma},\label{Green function integration estimates}
\end{align}
where $ C $ depends only on $ \mu,\omega(t),\kappa,\lambda,m,\sigma $ and $ \om $.
\end{lem}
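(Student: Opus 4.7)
The plan is to decompose $\om$ into dyadic regions around $x$ and, on each piece, estimate $\int|\nabla_{y}G_{\va}(x,y)|[\delta(y)]^{\sigma-1}\,dy$ by combining Cauchy--Schwarz with Caccioppoli's inequality (Lemma~\ref{Caccioppoli}) and the pointwise bounds on $G_{\va}$ from Theorem~\ref{Green}. I will fix $x\in\om$, set $r=\delta(x)$ and $L=\ln(\operatorname{diam}(\om)/r)$, and write $\om = I_{\text{in}} \cup \bigcup_{j\ge 0} I_{j}$ with $I_{\text{in}} = \om\cap B(x,r/2)$ and $I_{j} = \om\cap(B(x,2R_{j})\setminus B(x,R_{j}))$, $R_{j}=2^{j-1}r$, the union running over those $j$ with $R_{j}\le\operatorname{diam}(\om)$. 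Parameters $\sigma_{1}\in(\sigma,1)$ and $\sigma_{2}\in(1-\sigma,1)$ will be chosen below.

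On $I_{\text{in}}$ one has $\delta(y)\ge r/2$, so it suffices to show $\int_{I_{\text{in}}}|\nabla_{y}G_{\va}|\,dy\le Cr$. I would subdivide $I_{\text{in}}$ into dyadic shells $A_{k}=\om\cap\{2^{-k-1}r<|y-x|\le 2^{-k}r\}$ ($k\ge 0$), cover each $A_{k}$ by $O(1)$ balls $B(y_{\alpha},\rho)$ of radius $\rho=c\cdot 2^{-k}r$ with $c>0$ small enough that $x\notin B(y_{\alpha},4\rho)$, and apply the Caccioppoli estimate of the Remark following Lemma~\ref{Caccioppoli} to $v_{\va}:=G_{\va}(x,\cdot)-J$ with $J:=G_{\va}(x,y_{\alpha})$. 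The H\"{o}lder estimate \eqref{Pointwise estimates for Green functions 6} forces $\|v_{\va}\|_{L^{\infty}(B(y_{\alpha},2\rho))}\le C$ once $c$ is small, while \eqref{Pointwise estimates for Green functions 4} gives $|J|\le C(k+1+L)$. The resulting bound $\int_{A_{k}}|\nabla_{y}G_{\va}|^{2}\le C+C(2^{-k}r)^{2}(k+L)^{2}$, followed by Cauchy--Schwarz and summation over $k$, produces $\int_{I_{\text{in}}}|\nabla_{y}G_{\va}|\le Cr+Cr^{2}(1+L)\le Cr$, the last step being valid because $r(1+L)=r+r\ln(\operatorname{diam}(\om)/r)$ is bounded on $(0,\operatorname{diam}(\om)]$. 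Multiplication by $Cr^{\sigma-1}$ yields the required $Cr^{\sigma}$.

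For each outer annulus $I_{j}$ with $R_{j}\ge 4r$, I would further split $I_{j}=\bigcup_{\ell\ge 0}I_{j}^{\ell}$ with $I_{j}^{\ell}=I_{j}\cap\{2^{-\ell-1}R_{j}<\delta(y)\le 2^{-\ell}R_{j}\}$. The pointwise estimate \eqref{Pointwise estimates for Green functions 3} then yields $|G_{\va}(x,y)|\le Cr^{\sigma_{1}}[\delta(y)]^{\sigma_{2}}R_{j}^{-\sigma_{1}-\sigma_{2}}$ throughout $I_{j}^{\ell}$. Covering $I_{j}^{\ell}$ by $\sim 2^{\ell}$ balls of radius $\sim 2^{-\ell}R_{j}$ (interior for $\ell=0$, boundary for $\ell\ge 1$, using that $G_{\va}(x,\cdot)=0$ on $\pa\om$), Caccioppoli on each ball gives $\int_{I_{j}^{\ell}}|\nabla_{y}G_{\va}|^{2}\le Cr^{2\sigma_{1}}R_{j}^{-2\sigma_{1}}\,2^{\ell(1-2\sigma_{2})}$. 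Combined with the weight estimate $\int_{I_{j}^{\ell}}[\delta(y)]^{2(\sigma-1)}\,dy\le C\,R_{j}^{2\sigma}\,2^{\ell(1-2\sigma)}$ (a consequence of the $C^{1,\eta}$ regularity of $\pa\om$) and Cauchy--Schwarz, this gives
\[
\int_{I_{j}^{\ell}}|\nabla_{y}G_{\va}|\,[\delta(y)]^{\sigma-1}\,dy\le Cr^{\sigma_{1}}R_{j}^{\sigma-\sigma_{1}}\,2^{\ell(1-\sigma-\sigma_{2})}.
\]
The geometric series in $\ell$ converges since $\sigma_{2}>1-\sigma$, and that in $j$ converges since $\sigma_{1}>\sigma$; both sum to $Cr^{\sigma}$. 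The finitely many transitional annuli with $R_{j}<4r$ would be absorbed into the treatment of $I_{\text{in}}$.

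The main obstacle will be the inner piece $I_{\text{in}}$: the pointwise bound on $G_{\va}$ there is only logarithmic, so Cauchy--Schwarz/Caccioppoli applied to $G_{\va}$ itself introduces a factor of $L=\ln(\operatorname{diam}(\om)/\delta(x))$ that cannot be absorbed into $[\delta(x)]^{\sigma}$. The key device is to subtract the constant $J=G_{\va}(x,y_{\alpha})$ before invoking Caccioppoli, so that the $L^{2}$ input becomes the \emph{oscillation} of $G_{\va}$, which is controlled by \eqref{Pointwise estimates for Green functions 6}. This manoeuvre reintroduces $|J|$-dependent source terms from the lower-order coefficients $V,B,c+\lambda$; the delicate step is verifying, via $\sum_{k\ge 0}(k+L)2^{-2k}=O(1+L)$ together with the bound $r(1+L)\le C$ on $(0,\operatorname{diam}(\om)]$, that these contribute $O(r)$ rather than $O(r\log(1/r))$ after summation over the shells.
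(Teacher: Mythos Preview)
Your approach is correct and mirrors the paper's: an inner ball $B(x,\tfrac{1}{2}\delta(x))$ handled via Caccioppoli on dyadic shells with a subtracted constant, and an outer region handled via a distance-to-boundary decomposition combined with Caccioppoli and the pointwise estimate \eqref{Pointwise estimates for Green functions 3}. The only cosmetic differences are that the paper bounds the subtracted constant by $|J|\le CR^{-\sigma_{1}}$ using \eqref{preliminary} (so $|J|R+|J|R^{2}\le CR^{1-\sigma_{1}}\le C$, which avoids your logarithmic summation entirely), and uses a Whitney decomposition of $\om$ in place of your dyadic-in-$|x-y|$, dyadic-in-$\delta(y)$ grid.
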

\begin{proof}
Set $ x_0\in \om $ and $ r=\frac{1}{2}\delta(x_0) $. For all $ R<\frac{1}{4}\delta(x_0) $, we consider the annulus $ B(x_0,2R)\backslash B(x_0,R) $. We can use small two dimensional balls with radius of $ \frac{5}{8}R $ whose center are on the circle $ B(x_0,\frac{3R}{2}) $ to cover the annulus $ B(x_0,2R)\backslash B(x_0,R) $. We denote these small balls as $ \left\{B(x_i,\frac{5}{8}R)\right\}_{i=1}^{N} $. Obviously, we have $ N\leq C\frac{\pi(4R^2-R^2)}{\pi(\frac{5}{8} R)^2}\leq C $, where $ C $ is a constant, independent of $ R $. The specific positional relationship is shown in Figure 1, where $ r_1=\frac{2}{3}R $, $ r_2=\frac{5}{8}R $ and $ r_3=2R $.

\centerline{
\begin{tikzpicture}
\filldraw[color=black, fill=gray!50] (0,0) circle (4);
\filldraw[color=black, fill=black!0] (0,0) circle (2);
\filldraw[color=black, fill=black](2.4,1.8) circle (0.05);
\filldraw[color=black, fill=black](0,0) circle (0.05);
\filldraw[color=black, fill=gray!60] (2.4,1.8) circle (1.7);
\filldraw[color=black, fill=gray!70] (2.4,1.8) circle (1.25);
\node[color=black] at (-0.2,0.2) {$ x_0 $};
\node[color=black] at (2.4,2.0) {$ x_i $};
\node[color=black] at (-1.7,-1.6) {$ r_3 $};
\node[color=black] at (3.0,1.3) {$ r_1 $};
\node[color=black] at (1.9,1.5) {$ r_2 $};
\draw [color=black, thick][->](0,0) -- (-2.4,-3.2);
\draw [color=black, thick][->](2.4,1.8) -- (1.65,0.8);
\draw [color=black, thick][->](2.4,1.8) -- (1.65,0.8);
\draw [color=black, thick][->](2.4,1.8) -- (3.42,0.44);
\filldraw[color=black, fill opacity=0] (2.4,1.8) circle (1.7);
\filldraw[color=black, fill opacity=0] (2.4,1.8) circle (1.25);
\filldraw[color=black, fill opacity=0] (0,0) circle (4);
\filldraw[color=black, fill opacity=0] (0,0) circle (2);
\end{tikzpicture}
}
\centerline{Figure 1.Balls $ B(x_i.\frac{5}{8}R) $ construct a covering of $ B(x_0,2R)\backslash B(x_0,R) $.}
Then, for $ u_{\va}(y)=G_{\va}(x_0,y) $, we have $ \mathcal{L}_{\va}^*(u_{\va})=0 $ in $ B(x_0,3R)\backslash B(x_0,\frac{1}{2} R) $. Using H\"{o}lder's inequality, we have
\begin{align}
\int_{B(x_0,2R)\backslash B(x_0,R)}|\nabla_y G_{\va}(x_0,y)|dy&\leq C\left(\int_{B(x_0,2R)\backslash B(x_0,R)}|\nabla_y G_{\va}(x_0,y)|^2dy\right)^{\frac{1}{2}}R\nonumber\\
&\leq C\sum_{i=1}^{N}\left(\int_{B(x_i,\frac{5}{8} R)}|\nabla_y G_{\va}(x_0,y)|^2dy\right)^{\frac{1}{2}}R.\nonumber
\end{align}
Using the previous annotation of Caccioppoli's inequality $ \eqref{Caccioppoli rem} $, we have
\begin{align}
&\left(\int_{B(x_i,\frac{5}{8} R)}|\nabla_y G_{\va}(x_0,y)|^2dy\right)^{\frac{1}{2}}\leq\frac{C}{R}\left(\int_{B(x_i,\frac{2}{3} R)}\left| G_{\va}(x_0,y)-\dashint_{B(x_i,\frac{2}{3} R)}G_{\va}(x_0,z)dz\right|^2dy\right)^{\frac{1}{2}}\nonumber\\
&\quad\quad\quad\quad+C\left|\dashint_{B(x_i,\frac{2}{3} R)}G_{\va}(x_0,z)dz\right|R+C\left|\dashint_{B(x_i,\frac{2}{3} R)}G_{\va}(x_0,z)dz\right|R^2.\nonumber
\end{align}
For any $ \sigma_1\in(0,1) $, by using $ \eqref{preliminary} $, we have
\begin{align}
\left|\dashint_{B(x_i,\frac{2}{3} R)}G_{\va}(x_0,z)dz\right|\leq\dashint_{B(x_i,\frac{2}{3} R)}|G_{\va}(x_0,z)|dz\leq CR^{-\sigma_1}.\label{ww23}
\end{align}
Since $ G_{\va}(x_0,y)\in \BMO(\om) $, in view of $ \eqref{Corollary 6.22} $, we have
\begin{align}
\left(\dashint_{B(x_i,\frac{2}{3} R)}\left| G_{\va}(x_0,y)-\dashint_{B(x_i,\frac{2}{3} R)}G_{\va}(x_0,z)dz\right|^2dy\right)^{\frac{1}{2}}\leq C\left\|G_{\va}(x_0,y)\right\|_{*}\leq C.\label{ww24}
\end{align}
Combining it with $ \eqref{ww23} $, $ \eqref{ww24} $ and the fact that $ R\leq \diam(\om) $, we can obtain
\begin{align}
\left(\int_{B(x_i,\frac{5}{8} R)}|\nabla_y G_{\va}(x_0,y)|^2dy\right)^{\frac{1}{2}}\leq C,\nonumber
\end{align}
where $ C $ depends only on $ \mu,\omega(t),\kappa,\lambda,m,\sigma_1 $ and $ \om $. Then
\begin{align}
\int_{B(x_0,2R)\backslash B(x_0,R)}|\nabla_y G_{\va}(x_0,y)|dy\leq CR.\nonumber
\end{align}
Therefore, we have
\begin{align}
\int_{B(x_0,r)}|\nabla_y G_{\va}(x_0,y)|dy&\leq \sum_{j=0}^{\infty}\int_{B(x_0,2^{-j}r)\backslash B(x_0,2^{-j-1}r)}|\nabla_y G_{\va}(x_0,y)|dy\nonumber\\
&\leq C\sum_{j=1}^{\infty}2^{-j}r+\int_{B(x_0,r)\backslash B(x_0,\frac{1}{2} r)}|\nabla_y G_{\va}(x_0,y)|dy\nonumber\\
&\leq Cr+\left(\int_{B(x_0,r)\backslash B(x_0,\frac{1}{2} r)}|\nabla_y G_{\va}(x_0,y)|^2dy\right)^{\frac{1}{2}}r\nonumber\\
&\leq Cr+\left(\dashint_{B(x_0,\frac{3}{2} r)\backslash B(x_0,\frac{1}{4}r)}|G_{\va}(x_0,y)|^2dy\right)^{\frac{1}{2}}r\leq Cr.\nonumber
\end{align}
For the last inequality, we have used $ \eqref{Big ball integration} $. Then, we have
\begin{align}
\int_{B(x_0,r)}|\nabla_y G_{\va}(x_0,y)|[\delta(y)]^{\sigma-1}dy\leq Cr^{\sigma}.\label{Improvement of Shen1}
\end{align}
Next, to estimate the integral on $ \om\backslash B(x_0,r) $, we observe that if $ Q $ is a cube in $ \mathbb{R}^2 $ with the property $ 3Q\subset\om\backslash\left\{x_0\right\} $ and its side length $ l(Q)\sim \operatorname{dist}(Q,\pa\om) $, then
\begin{align}
\int_{Q}|\nabla_y G_{\va}(x_0,y)|[\delta(y)]^{\sigma-1}dy&\leq C[l(Q)]^{\sigma-1}|Q|\left(\dashint_{Q}|\nabla_y G_{\va}(x_0,y)|^2dy\right)^{\frac{1}{2}}\nonumber\\
&\leq C[l(Q)]^{\sigma-2}|Q|\left(\dashint_{2Q}|G_{\va}(x_0,y)|^2dy\right)^{\frac{1}{2}},\nonumber
\end{align}
where we have used Caccioppoli's inequality $ \eqref{Caccioppoli inequality} $ for the last step. This, together with the pointwise estimates $ \eqref{Pointwise estimates for Green functions 1} $-$ \eqref{Pointwise estimates for Green functions 3} $, gives
\begin{align}
\int_{Q}|\nabla_y G_{\va}(x_0,y)|[\delta(y)]^{\sigma-1}dy&\leq Cr^{\sigma_1}[l(Q)]^{\sigma+\sigma_2-2}|Q|\left(\dashint_{2Q}\frac{dy}{|x_0-y|^{2(\sigma_1+\sigma_2)}}\right)^{\frac{1}{2}}\nonumber\\
&\leq Cr^{\sigma_1}[l(Q)]^{\sigma+\sigma_2-2}|Q|\dashint_{2Q}\frac{dy}{|x_0-y|^{(\sigma_1+\sigma_2)}}\nonumber\\
&\leq Cr^{\sigma_1}\int_{Q}\frac{[\delta(y)]^{\sigma+\sigma_2-2}}{|x_0-y|^{\sigma_1+\sigma_2}}dy,\label{Integration on cubes}
\end{align}
where $ 0<\sigma_1,\sigma_2<1 $, and we have used the observation that $ \delta(y)\sim l(Q) $ for $ y\in 2Q $ and $ |x-y|\sim |x-z| $ for any $ y,z\in 2Q $. Finally, we perform a Witney decomposition on $ \om $(see \cite{EM}). This gives $ \om=\cup_{j}Q_j $ where $ \left\{Q_j\right\} $ is a sequence of (closed) non-overlapping cubes with the property that $ 4Q_j\subset \om $ and $ l(Q_j)\sim\operatorname{dist}(Q_j,\pa\om) $. Let
\begin{align}
\mathcal{O}=\cup_{3Q_j\subset\om\backslash\left\{x_0\right\}}Q_j.\nonumber
\end{align}
Note that if $ y\in\om\backslash\mathcal{O} $, then  $ y\in Q_j $ for some $ Q_j $ such that $ x_0\in 3Q_j $. It follows that $ |y-x_0|\leq Cl(Q_j)\leq C\delta(x_0) $. Hence
\begin{align}
\int_{\om\backslash\mathcal{O}}|\nabla_y G_{\va}(x_0,y)|[\delta(y)]^{\sigma-1}dy\leq Cr^{\sigma}\nonumber
\end{align}
by $ \eqref{Big ball integration} $ and $ \eqref{Improvement of Shen1} $. By the summation, the estimate $ \eqref{Integration on cubes} $ leads to
\begin{align}
\int_{\mathcal{O}\backslash B(x_0,r)}|\nabla_y G_{\va}(x_0,y)|[\delta(y)]^{\sigma-1}dy&\leq  Cr^{\sigma_1}\int_{\om}\frac{[\delta(y)]^{\sigma+\sigma_2-2}}{(|x_0-y|+r)^{\sigma_1+\sigma_2}}dy,\label{ww25}
\end{align}
where  we have used the fact that $ |x_0-y|\geq c_1\delta(x_0)=c_1r $ for any $ y\in\mathcal{O}\backslash B(x_0,r) $ with some positive constant $ c_1$.  Since $ \om $ is $ C^{1,\eta} $, the integral in the RHS of $ \eqref{ww25} $ is bounded by
\begin{align}
  C\int_{0}^{\infty}\int_{\mathbb{R}}\frac{t^{\sigma+\sigma_2-2}}{(|y'|+|t-r|+r)^{\sigma_1+\sigma_2}}dy'dt&\leq C\int_{0}^{\infty}\frac{t^{\sigma+\sigma_2-2}}{(|t-r|+r)^{\sigma_1+\sigma_2-1}}dt \nonumber\\
 &\leq Cr^{\sigma-\sigma_1}\int_{0}^{\infty}\frac{t^{\sigma+\sigma_2-2}}{(|t-1|+1)^{\sigma_1+\sigma_2-1}}dt\leq Cr^{\sigma-\sigma_1},\nonumber
\end{align}
where we have chosen $ \sigma_1,\sigma_2\in (0,1) $ so that $ \sigma_1+\sigma_2>1 $ and $ \sigma<\sigma_1<1 $. This, together with $ \eqref{Improvement of Shen1} $ and $ \eqref{ww25} $, completes the proof.
\end{proof}
Using the above lemma, we will prove Theorem \ref{Holder}.
%==============================================================
\begin{proof}[The proof of Theorem \ref{Holder}]
First we assume that $ u_{\va,1} $ satisfies the equation
\begin{align}
\mathcal{L}_{\va}(u_{\va,1})=0\quad\text{in }\om\quad\text{and}\quad u_{\va,1}=g\quad\text{on }\pa\om. \nonumber
\end{align}
Choose $ v $ such that $ \Delta v^{\alpha}=0 $ in $ \om $ and $ v^{\alpha}=g^{\alpha} $ on $ \pa\om $. By the classical interior Lipschitz estimates for harmonic functions, Caccioppoli's inequality $ \eqref{Caccioppoli inequality} $ and H\"{o}lder estimate: $ [v]_{C^{0,\sigma}(\om)}\leq C\left\|g\right\|_{C^{0,\sigma}(\pa\om)} $, we have
\begin{align}
|\nabla v(x)|&\leq  C\left(\dashint_{B(x,\frac{1}{4}\delta(x))}|\nabla v|^2dy\right)^{\frac{1}{2}}\leq\frac{C}{\delta(x)}\left(\dashint_{B(x,\frac{1}{2}\delta(x))}|v(y)-v(x)|^2dy\right)^{\frac{1}{2}}\nonumber\\
&\leq C[\delta(x)]^{\sigma-1}[v]_{C^{0,\sigma}(\om)}\leq C[\delta(x)]^{\sigma-1}\left\|g\right\|_{C^{0,\sigma}(\pa\om)},\nonumber
\end{align}
for all $ \sigma\in (0,1) $. Let $ w_{\va}=u_{\va,1}-v $, we get
\begin{align}
\mathcal{L}_{\va}(w_{\va})=-\mathcal{L}_{\va}(v)\quad\text{ in }\om\quad\text{and}\quad w_{\va}=0\quad\text{on }\pa\om.\nonumber
\end{align}
Using the normalization, we can assume that $ \left\|g\right\|_{C^{0,\sigma}(\om)}=1 $. According to the representation theorem $ \eqref{Representation formula} $, we obtain
\begin{align}
w_{\va}(x)&=-\int_{\om}\nabla_y G_{\va}(x,y)[A(y/\va)\nabla v+V(y/\va)v]dy-\int_{\om}G_{\va}(x,y)[B(y/\va)\nabla v+(c(y/\va)+\lambda)v]dy,\nonumber
\end{align}
which implies that
\begin{align}
|w_{\va}(x)|&\leq  C\int_{\om}|\nabla_yG_{\va}(x,y)|[\delta(y)]^{\sigma-1}dy
+C\int_{\om}|G_{\va}(x,y)|[\delta(y)]^{\sigma-1}dy\nonumber\\
&\quad+C\int_{\om}(|\nabla_yG_{\va}(x,y)|+|G_{\va}(x,y)|)dy\nonumber\\
&\leq I_1+I_2+I_3,\nonumber
\end{align}
where we use the fact that $ v $ is bounded in $ \om $ as a result of the maximum principle. In view of $\eqref{Green function integration estimates} $, we have $ I_1\leq C[\delta(x)]^{\sigma} $. Next, we will estimate $ I_2 $. In fact, by using $ \eqref{ww26} $, we get
\begin{align}
I_2&\leq \left(\int_{\om\cap\left\{\delta(y)<\frac{1}{4}|x-y|\right\}}+\int_{\om\cap\left\{\delta(y)\geq\frac{1}{4}|x-y|\right\}}\right)|G_{\va}(x,y)|[\delta(y)]^{\sigma-1}dy\nonumber\\
&\leq\int_{\om\cap\left\{\delta(y)<\frac{1}{4}|x-y|\right\}}\frac{C[\delta(x)]^{\sigma}[\delta(y)]^{\sigma_1}}{|x-y|^{\sigma+\sigma_1}}[\delta(y)]^{\sigma-1}dy+\int_{\om\cap\left\{\delta(y)\geq\frac{1}{4}|x-y|\right\}}\frac{C[\delta(x)]^{\sigma}}{|x-y|^{\sigma+\sigma_2}}[\delta(y)]^{\sigma-1}dy\nonumber\\
&\leq \int_{\om\cap\left\{\delta(y)<\frac{1}{4}|x-y|\right\}}\frac{C[\delta(x)]^{\sigma}}{|x-y|}dy+\int_{\om\cap\left\{\delta(y)\geq\frac{1}{4}|x-y|\right\}}\frac{C[\delta(x)]^{\sigma}}{|x-y|^{\sigma_2+1}}dy\leq C[\delta(x)]^{\sigma}.\nonumber
\end{align}
Here, we choose $ \sigma_1\in(0,1) $ such that $ \sigma_1+\sigma>1 $ and $ \sigma_2\in(0,1) $. Then, we have $ I_2\leq C[\delta(x)]^{\sigma} $. Moreover, as $ I_3\leq C(I_1+I_2) $, we have $ I_3\leq C[\delta(x)]^{\sigma} $. Therefore,
\begin{align}
|w_{\va}(x)|\leq C[\delta(x)]^{\sigma}.\label{ww27}
\end{align}
For $ u_{\va} $, we can choose $ u_{\va,2}=u_{\va}-u_{\va,1} $, then, $ \mathcal{L}_{\va}(u_{\va,2})=\operatorname{div}(f)+F $ in $ \om $ and $ u_{\va,2}=0 $ on $ \pa\om $. From the H\"{o}lder estimate $ \eqref{Holder estimates elementary} $, we get
\begin{align}
\left\|u_{\va,2}\right\|_{C^{0,\sigma}(\om)}\leq C\left\{\left\|F\right\|_{L^q(\om)}+\left\|f\right\|_{L^p(\om)}\right\}.\nonumber
\end{align}
So the problem is reduced down to the proof of $ \left\|u_{\va,1}\right\|_{C^{0,\sigma}(\om)}\leq C $. By the definition of $ v $, we only need to show $ \left\|w_{\va}\right\|_{C^{0,\sigma}(\om)}\leq C $. We divide it into the following three cases: (1) $ |x-y|\leq \frac{1}{4}\delta(x) $; (2) $ |x-y|\leq \frac{1}{4}\delta(y) $; (3) $ |x-y|> \frac{1}{4}\delta(x) $ and $ |x-y|\leq \frac{1}{4}\delta(y) $. For the first case, choose $ r=\delta(x) $ and in view of $ \mathcal{L}_{\va}(w_{\va})=-\mathcal{L}_{\va}(v) $ with $ \eqref{Holder interior estimates} $, we have
\begin{align}
|w_{\va}(x)-w_{\va}(y)|&\leq [w_{\va}]_{C^{0,\sigma}(B(x,\frac{r}{4}))}|x-y|^{\sigma}\nonumber\\
&\leq  C|x-y|^{\sigma}\left\{r^{-\sigma}\left(\dashint_{B(x,\frac{r}{2})}|w_{\va}|^2dz\right)^{\frac{1}{2}}+r^{1-\sigma}\left(\dashint_{B(x,\frac{r}{2})}(|\nabla v|^p+|v|^p)dz\right)^{\frac{1}{p}}\right\}\nonumber\\
&\quad+C|x-y|^{\sigma}r^{2-\sigma}\left(\dashint_{B(x,\frac{r}{2})}(|\nabla v|^q+|v|^q)dz\right)^{\frac{1}{q}}\nonumber\\
&\leq  C|x-y|^{\sigma}\nonumber
\end{align}
where $ q=\frac{2p}{p+2} $ and $ \sigma=1-\frac{2}{p} $. The case (2) is similar to the case (1) and for the case (3), we obtain
\begin{align}
|w_{\va}(x)-w_{\va}(y)|\leq|w_{\va}(x)|+|w_{\va}(y)|\leq C|x-y|^{\sigma}.\nonumber
\end{align}
This completes the proof of Theorem \ref{Holder}.
\end{proof}

Using standard localization arguments in \cite{Xu1}, we can obtain the following result.

\begin{thm}[localization of Lipschitz estimates]\label{localization Lipschitz estimates}
Suppose that $ A\in\Lambda(\mu,\tau,\kappa),V $ satisfies $ \eqref{Periodicity} $, $ \eqref{regularity} $, $ B $ and $ c $ satisfy $ \eqref{Periodicity} $, $ \eqref{Boundedness} $, $ \lambda\geq\lambda_{0} $ and $ \om $ is a bounded $ C^{1,\eta} $ domain in $ \mathbb{R}^d $ with $ (0<\eta<1) $, $ d\geq 2 $. Let $ x_0\in\om $ and $ 0<r<\operatorname{diam}(\om) $. If $ \pa\om\cap B(x_0,2r)\neq\emptyset $, assume that $ u_{\va}\in W^{1,2}(\om(x_0,2r);\mathbb{R}^m) $ is the weak solution to
\begin{align}
\left\{\begin{matrix}
\mathcal{L}_{\va}(u_{\va})=F&\text{in}&\om(x_0,2r),\\
u_{\va}=0&\text{on}&\Delta(x_0,2r),
\end{matrix}\right.\nonumber
\end{align}
with $ F\in L^p(\om(x_0,2r);\mathbb{R}^m) $ and $ p>d $. If $ \pa\om\cap B(x_0,2r)=\emptyset $, assume that $ u_{\va}\in W^{1,2}(B(x_0,2r);\mathbb{R}^m) $ is a weak solution to
\begin{align}
\mathcal{L}_{\va}(u_{\va})=F\quad\text{in}\quad B(x_0,2r),\nonumber
\end{align}
with the same data $ F $ and $ p $, then
\begin{align}
\left\|\nabla u_{\va}\right\|_{L^{\infty}(\om(x_0,r))}\leq\frac{C}{r}\left\{\left(\dashint_{\om(x_0,2r)}|u_{\va}|^2\right)^{\frac{1}{2}}+r^2\left(\dashint_{\om(x_0,2r)}|F|^p\right)^{\frac{1}{p}}\right\},\label{localization Lipschitz estimates e}
\end{align}
where $ C $ depends only on $ \mu,\tau,\kappa,\lambda,p,d,m, \sigma,\eta $ and $ \om $.
\end{thm}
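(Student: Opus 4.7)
The plan is to reduce the estimate to the global Lipschitz estimate of Theorem \ref{Lipschitz estimates} via a standard cutoff-plus-extension argument. Via translation and the rescaling $\tilde u_\va(y)=u_\va(x_0+ry)$, $\tilde F(y)=r^2 F(x_0+ry)$, the equation becomes $\widetilde{\mathcal{L}}_{\va/r}(\tilde u_\va)=\tilde F$ on a rescaled domain, with $\widetilde{\mathcal{L}}_{\va/r}$ of the same form and coefficients in the same class $\Lambda(\mu,\tau,\kappa)$ up to constants depending only on $\diam(\om)$. The threshold $\lambda\geq \lambda_0$ is preserved after absorbing $r^2$ into the constant. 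I may therefore assume $x_0=0$ and $r=1$.

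Introduce a cutoff $\varphi\in C_0^\infty(B(0,3/2))$ with $\varphi\equiv 1$ on $B(0,1)$ and $|\nabla\varphi|\leq C$, and set $w_\va=\varphi u_\va$. A direct computation using the product rule, together with $\mathcal{L}_\va(u_\va)=F$, yields
\begin{align*}
\mathcal{L}_\va(w_\va)=\operatorname{div}(\tilde f)+\tilde F\quad\text{on }\Omega^{*},\qquad w_\va=0\text{ on }\pa\Omega^{*},
\end{align*}
where $\tilde f=-A_\va u_\va \nabla\varphi$ and $\tilde F=\varphi F-[A_\va\nabla u_\va+V_\va u_\va]\cdot\nabla\varphi+B_\va u_\va\nabla\varphi$. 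In the interior case $\pa\om\cap B(0,2)=\emptyset$ I take $\Omega^{*}=B(0,2)$; in the boundary case I construct a bounded $C^{1,\eta}$ domain $\Omega^{*}$ with $\Omega^{*}\cap B(0,3/2)=\om\cap B(0,3/2)$ and $\Omega^{*}\subset\om(0,3)$ by boundary straightening and smooth capping, so that the $C^{1,\eta}$ character of $\Omega^{*}$ depends only on that of $\om$. Applying Theorem \ref{Lipschitz estimates} on $\Omega^{*}$ with $g=0$ and exponent $\sigma=\min\{\tau,\eta,1-d/p\}$ gives
\begin{align*}
\|\nabla w_\va\|_{L^\infty(\Omega^{*})}\leq C\{\|\tilde f\|_{C^{0,\sigma}(\Omega^{*})}+\|\tilde F\|_{L^p(\Omega^{*})}\}.
\end{align*}

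It remains to bound the right-hand side. Since $\nabla\varphi$ is compactly supported in the transition annulus $B(0,3/2)\setminus B(0,1)$ and $A\in C^{0,\tau}$,
\begin{align*}
\|\tilde f\|_{C^{0,\sigma}(\Omega^{*})}\leq C\|u_\va\|_{C^{0,\sigma}(\om(0,3/2))}.
\end{align*}
The latter is controlled, via the localized H\"older estimate $\eqref{Holder interior estimates}$ (and its boundary analogue, which follows from Theorem \ref{Holder} applied to the cutoff equation on the $C^{1,\eta}$ portion of $\pa\om\cap B(0,2)$) using $p>d$ and $\sigma=1-d/p$, by $\|u_\va\|_{L^2(\om(0,2))}+\|F\|_{L^p(\om(0,2))}$. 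Similarly $\|\tilde F\|_{L^p(\Omega^{*})}$ is bounded by $\|F\|_{L^p(\om(0,2))}$ plus $\|\nabla u_\va\|_{L^p(\om(0,3/2))}+\|u_\va\|_{L^p(\om(0,3/2))}$, and the gradient piece is controlled by the localized $W^{1,p}$ estimate $\eqref{W^{1,p} interior estimates}$ by the same quantities. Since $\nabla u_\va=\nabla w_\va$ on $\om(0,1)$, combining the pieces yields $\eqref{localization Lipschitz estimates e}$.

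The main obstacle is the boundary case: one must ensure that the auxiliary domain $\Omega^{*}$ can be produced so that the constant in Theorem \ref{Lipschitz estimates} depends only on the $C^{1,\eta}$ character of $\om$, uniformly in $x_0\in\overline\om$ and $r<\diam(\om)$. This is carried out by a boundary-flattening diffeomorphism whose $C^{1,\eta}$ norms are bounded by those of $\om$, composed with a smooth extension of the flattened piece; this is the standard localization device used in \cite{Xu1}. A secondary nuisance is propagating the H\"older and $W^{1,p}$ bounds on $u_\va$ up to the Dirichlet piece $\Delta(x_0,2r)$, which is handled by the boundary versions of Theorems \ref{W1p} and \ref{Holder} since $u_\va=0$ there.
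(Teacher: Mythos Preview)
Your cutoff-plus-global-estimate strategy has a genuine gap at the step where you bound $\|\tilde f\|_{C^{0,\sigma}(\Omega^{*})}$. The divergence term $\tilde f=-A_\va\, u_\va\,\nabla\varphi$ involves $A_\va(x)=A(x/\va)$, and
\[
[A_\va]_{C^{0,\sigma}}=\va^{-\sigma}[A]_{C^{0,\sigma}},
\]
so the product bound
\[
[\tilde f]_{C^{0,\sigma}}\le \|A_\va\|_{L^\infty}\,[u_\va\nabla\varphi]_{C^{0,\sigma}}+[A_\va]_{C^{0,\sigma}}\,\|u_\va\nabla\varphi\|_{L^\infty}
\]
carries a factor $\va^{-\sigma}$ in the second term. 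Hence $\|\tilde f\|_{C^{0,\sigma}}\le C\|u_\va\|_{C^{0,\sigma}}$ cannot hold with $C$ independent of $\va$, and feeding this into Theorem~\ref{Lipschitz estimates} destroys the uniformity in $\va$ that the statement requires. After your rescaling to $r=1$ the parameter becomes $\va/r$, so the problem is precisely in the homogenization regime $\va\ll r$; for $\va\gtrsim r$ your argument reduces to classical Schauder theory and is fine.

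The route the paper refers to, via \cite{Xu1}, avoids this by running in the opposite direction: in the Avellaneda--Lin compactness scheme the \emph{local} interior and boundary Lipschitz estimates (i.e.\ exactly \eqref{localization Lipschitz estimates e}) are the primary outputs of the iteration, and the global estimate of Theorem~\ref{Lipschitz estimates} is derived from them by a covering argument. So ``standard localization arguments in \cite{Xu1}'' means quoting those local results, not deducing them from the global one. If you want an argument that leverages Theorem~\ref{Lipschitz estimates} as a black box, replace the cutoff by a decomposition $u_\va=v_\va+z_\va$ on an auxiliary $C^{1,\eta}$ domain $\Omega^{*}$ with $\om(x_0,3r/2)\subset\Omega^{*}\subset\om(x_0,2r)$, where $\mathcal{L}_\va(v_\va)=F$ in $\Omega^{*}$, $v_\va=0$ on $\partial\Omega^{*}$ (so Theorem~\ref{Lipschitz estimates} applies with $f\equiv 0$), and $\mathcal{L}_\va(z_\va)=0$ in $\om(x_0,3r/2)$ with $z_\va=0$ on $\Delta(x_0,3r/2)$; but then you still need the local Lipschitz estimate for \emph{homogeneous} solutions to handle $z_\va$, which is again the core compactness result from \cite{Xu1}.
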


\begin{proof}[Proof of Theorem \ref{Lipschitz estimates Green}]
For $ \eqref{Pointwise estimates for Green functions 11} $, $ \eqref{Pointwise estimates for Green functions 21} $ and $ \eqref{Pointwise estimates for Green functions 31} $, the proof is trivial since we just need to change localization of the H\"{o}lder estimates $ \eqref{Holder interior estimates} $ to the localization of $ \eqref{localization Lipschitz estimates e} $. Also, we note that the constant $ \frac{1}{4} $ in $ \eqref{Pointwise estimates for Green functions 11} $-$ \eqref{Pointwise estimates for Green functions 31} $ can be replaced by $ \frac{1}{3} $. Then, we only need to show $ \eqref{Lipschitz estimates Green e 1} $-$ \eqref{Lipschitz estimates Green e 3} $. For $ x_0,y_0\in\om $, we can set $ r=|x_0-y_0| $. If $ \delta(x_0)<\frac{1}{4}r $, we have, $ \mathcal{L}_{\va}(G_{\va}^{\cdot\gamma}(\cdot,y_0))=0 $ in $ \om(x_0,\frac{1}{4}r) $ and $ G_{\va}^{\cdot\gamma}(\cdot,y_0)=0 $ on $ \pa\om\cap B(x_0,\frac{1}{4}r) $. Then, by using $ \eqref{Boundedness estimates} $ and $ \eqref{localization Lipschitz estimates e} $, we can obtain that
\begin{align}
|\nabla_x G_{\va}(x_0,y_0)|\leq\frac{C}{r}\left(\dashint_{B(x_0,\frac{1}{4}r)}|G_{\va}(x,y_0)|^2dx\right)^{\frac{1}{2}}\leq\frac{C}{|x_0-y_0|}.\label{ww44}
\end{align}
If $ \delta(y_0)<\frac{1}{4}r $, for any $ x\in B(x_0,\frac{1}{8}r) $, we have $ |x-y_0|\geq |x_0-y_0|-\frac{1}{8}r=\frac{7}{8}r $ and $ \delta(y_0)<\frac{2}{7}|x-y_0|\leq \frac{1}{3}|x-y_0| $. From \eqref{Pointwise estimates for Green functions 21}  and $ \eqref{localization Lipschitz estimates e} $, it can be obtained that
\begin{align}
|\nabla_x G_{\va}(x_0,y_0)|\leq\frac{C}{r}\left(\dashint_{B(x_0,\frac{1}{8}r)}|G_{\va}(x,y_0)|^2dx\right)^{\frac{1}{2}}\leq\frac{C\delta(y_0)}{|x_0-y_0|^2}.\label{4.35-2}
\end{align}
If $ \frac{1}{4}r\leq\delta(x_0) $ and $ \frac{1}{4}r\leq\delta(y_0) $, we have
\begin{align}
\frac{1}{4}|x_0-y_0|\leq \frac{3}{8}|x_0-y_0|=\frac{1}{2}\left(r-\frac{1}{4}r\right)\leq \frac{1}{2}|x-y_0|\quad\text{ for any }x\in B\left(x_0,\frac{1}{4}r\right).\label{ww39}
\end{align}
At first, there exists a point $ \overline{y}\in\pa\om $ (see Figure 2), such that
\begin{equation}
(x_0-\overline{y})//(x_0-y_0)\text{ and }(x_0-\overline{y})\cdot(x_0-y_0)>0.\label{condition}
\end{equation}
Then according to the fact that $ \frac{1}{4}|x_0-y_0|=\frac{1}{4}r\leq |y_0-\overline{y}| $, there always exists a positive integer $ N\in\mathbb{N}_+ $ and a sequence of points $ \{y_j\}_{j=1}^N $ such that
\begin{align}
&y_{j}=x_0+\frac{5}{4}(y_{j-1}-x_0),\ j=1,\ldots, N,\nonumber\\
&\frac{1}{4}|x_0-y_{N-1}|\leq |y_{N-1}-\overline{y}|\quad\text{and}\quad \frac{1}{4}|x_0-y_{N}|>|y_{N}-\overline{y}|.\nonumber
\end{align}

\centerline{
\begin{tikzpicture}
\filldraw[color=black, fill=gray!50] (-5,0) circle (0.5);
\filldraw[color=black, fill=black](-5,0) circle (0.05);
\filldraw[color=black, fill=black](-3,0) circle (0.05);
\filldraw[color=black, fill=black](-2.5,0) circle (0.05);
\filldraw[color=black, fill=black](-1.875,0) circle (0.05);
\filldraw[color=black, fill=black](-1.093,0) circle (0.05);
\filldraw[color=black, fill=black](5.125,0) circle (0.05);
\filldraw[color=black, fill=black](8,0) circle (0.05);
\node[color=black] at (-5,0.2) {$ x_0 $};
\node[color=black] at (-3,0.2) {$ y_0 $};
\node[color=black] at (-2.5,0.2) {$ y_1 $};
\node[color=black] at (-1.875,0.2) {$ y_2 $};
\node[color=black] at (-1.093,0.2) {$ y_3 $};
\node[color=black] at (5.125,0.2) {$ y_N $};
\node[color=black] at (8.2,0.2) {$ \overline{y} $};
\draw [color=black, thick][-](-5,0) -- (8,0);
\draw [color=black, thick][dotted](3,0.2) -- (4,0.2);
\draw [color=black, thick][dashed](8,1.5) -- (8,-0.5);
\node[color=black] at (8.4,0.9) {$ \partial\om $};
\end{tikzpicture}
}
\centerline{Figure 2.}
\noindent
Moreover, we have
\begin{align}
\mathcal{L}_{\va}(G_{\va}(x,y_0)-G_{\va}(x,y_1))=0\quad\text{ in }B\left(x_0,\frac{1}{4}r\right).\label{ww41}
\end{align}
From $ \eqref{Pointwise estimates for Green functions 6} $ and $ \eqref{ww39} $, since $ |y_1-y_0|=\frac{1}{4}|x_0-y_0|\leq \frac{1}{2}|x-y_0| $, we can obtain that for $ \sigma\in(0,1) $,
\begin{align}
|G_{\va}(x,y_0)-G_{\va}(x,y_1)|\leq \frac{C|y_1-y_0|^{\sigma}}{|x-y_0|^{\sigma}}\leq C\quad\text{ for any }x\in B\left(x_0,\frac{1}{4}r\right).\label{ww40}
\end{align}
By applying $ \eqref{localization Lipschitz estimates e} $ to $ \eqref{ww41} $, we can obtain
\begin{align}
|\nabla_x (G_{\va}(x_0,y_0)-G_{\va}(x_0,y_1))|\leq\frac{C}{r}
\left(\dashint_{B(x_0,\frac{1}{4}r)}|G_{\va}(x,y_0)-G_{\va}(x,y_1)|^2\right)^{\frac{1}{2}}\leq\frac{C}{|x_0-y_0|}.\label{ww43}
\end{align}
Similarly, owing to the fact that $ |y_{j+1}-y_j|\leq\frac{1}{2}|x_0-y_{j}| $, we have
\begin{equation}
|\nabla_x (G_{\va}(x_0,y_i)-G_{\va}(x_0,y_{i+1}))|\leq\frac{C}{|x_0-y_i|}
\left(\dashint_{B(x_0,\frac{1}{4}|x_0-y_i|)}|G_{\va}(x,y_i)-G_{\va}(x,y_{i+1})|^2\right)^{\frac{1}{2}}\leq\frac{C}{|x_0-y_i|},\label{ww43-i}
\end{equation}
where $ i=1,2,...,N-1 $. Finally, because of the simple observation that
\begin{equation}
\frac{1}{4}|x_0-y_N|>|y_N-\overline{y}|\geq\delta(y_N), \nonumber
\end{equation}
we can deduce from $ \eqref{4.35-2} $ that
\begin{equation}
|\nabla_x G_{\va}(x_0,y_N)|\leq\frac{C\delta(y_N)}{|x_0-y_N|^2}\leq\frac{C}{|x_0-y_N|}.
\label{ww43-N}
\end{equation}
From \eqref{ww43}-\eqref{ww43-N}, we get
    \begin{equation}
      |\nabla_xG_{\va}(x_0,y_0)|\leq \sum_{i=0}^N\frac{C}{|x_0-y_i|}\leq \frac{C}{r}\sum_{i=0}^N\left(\frac{4}{5}\right)^i\leq \frac{C}{r}\leq \frac{C}{|x_0-y_0|}.\label{4.40}
    \end{equation}
Therefore,
  \begin{equation}
      |\nabla_xG_{\va}(x_0,y_0)|\leq   \frac{C}{r},\quad \textrm{ when } \frac{1}{4}r\leq\delta(x_0) \textrm{ and } \frac{1}{4}r\leq\delta(y_0).\nonumber
    \end{equation}
This, together with  \eqref{ww44}  and \eqref{4.35-2}, gives the proof of $ \eqref{Lipschitz estimates Green e 1} $.
Then, $ \eqref{Lipschitz estimates Green e 2} $ follows directly from $ \eqref{Lipschitz estimates Green e 1} $ by considering the adjoint Green function $ G_{\va}^*(x,y) $. Finally, by applying $ \eqref{localization Lipschitz estimates e} $ to $ \nabla_yG_{\va}^{\cdot\gamma}(\cdot,y_0) $ and using $ \eqref{ww44} $, we can prove $ \eqref{Lipschitz estimates Green e 3} $.
\end{proof}

\section{$ L^p $ convergence rates}

In this section, we will consider the convergence of the Green functions for $ \mathcal{L}_{\va} $.

To handle the convergence rates of $ \mathcal{L}_{\va} $, we define some auxiliary functions via
\begin{align}
b_{ik}^{\al\gamma}(y)=\widehat{a}_{i k}^{\al\gamma}-a_{ik}^{\al \gamma}(y)-a_{ij}^{\al\beta}(y)\pa_j\chi_{k}^{\beta \gamma}(y),b_{i0}^{\al \gamma}(y)=\widehat{V}_{i}^{\al \gamma}-V_{i}^{\al \gamma}(y)-a_{i j}^{\al\beta}(y) \pa_j\chi_{0}^{\beta \gamma}(y),\label{b function}
\end{align}
and
\begin{align}
&\Delta\Theta_{i}^{\al \gamma}=\widehat{B}_{i}^{\al \gamma}-B_{i}^{\al \gamma}(y)-B_{j}^{\al \beta}(y)\pa_j\chi_{i}^{\beta \gamma}(y)\quad \text{in }\mathbb{R}^{d},\quad\int_{Y}\Theta_{i}^{\al \beta}(y)dy=0,\label{theta function 1}\\
&\Delta\Theta_{0}^{\al\gamma}=\widehat{c}^{\al\gamma}-c^{\al\gamma}(y)-B_{i}^{\al\beta}(y)\pa_i\chi_{0}^{\beta \gamma}(y)\quad\text{in }\mathbb{R}^{d},\quad\int_{Y}\Theta_{0}^{\al\beta}(y) dy=0,\label{theta function 2}
\end{align}
with $ 1\leq i\leq d $. We mention that the existence of $ \Theta_{k} $ is given by Theorem 4.28 in \cite{Cioranescu} on account of $\int_{Y} \Theta_{k}^{\al\gamma}(y)dy=0 $ for $ k=0,1,\ldots,d $. Furthermore it is not hard to see that $ \Theta_{k}^{\al\gamma} $ is periodic and belongs to $ W_{\operatorname{loc}}^{1,2}\left(\mathbb{R}^{d}\right) $.

Suppose that $ A\in\Lambda(\mu,\tau,\kappa) $, and $ V $ satisfies $ \eqref{Periodicity} $ and $ \eqref{regularity} $. From the interior Schauder estimate (see \cite{MG}), we obtain that
\begin{align}
\max_{0\leq k\leq d}\{\|\chi_k\|_{L^{\infty}(Y)},\|\nabla\chi_k\|_{L^{\infty}(Y)},[\nabla\chi_k]_{C^{0,\tau}(Y)}\}\leq C(\mu,\tau,\kappa,m,d).\label{boundedness of chi}
\end{align}

By the same argument, it follows from $ \eqref{boundedness of chi} $ that
\begin{align}
\max_{0\leq k\leq d}\{\left\|\nabla\Theta_k^{\al\gamma}\right\|_{L^{\infty}(Y)}\}\leq C(\mu,\tau,\kappa,m,d).\label{boundedness of Theta}
\end{align}

\begin{thm}[Convergence of Green functions for $ \mathcal{L}_{\va} $]\label{Green functions convergence}
Suppose that $ A\in\Lambda(\mu,\tau,\kappa),V,B $ satisfy $ \eqref{Periodicity} $ and $ \eqref{regularity} $, $ c $ satisfies $ \eqref{Periodicity} $ and $ \eqref{Boundedness} $, $\lambda \geq \lambda_{0} $ and $ \om $ is a bounded $ C^{1,1} $ domain in $ \mathbb{R}^d $ with $ d\geq 2 $.  Then, for $ G_{\va}(x,y) $ and $ G_0(x,y) $ being the Green functions for $ \mathcal{L}_{\va} $ and $ \mathcal{L}_{0} $, we have
\begin{align}
|G_{\va}(x,y)-G_{0}(x,y)|\leq\frac{C\va}{|x-y|^{d-1}},\label{Convergence of Green functions}
\end{align}
where $ C $ depends only on $ \mu,\tau,\kappa,\lambda,d,m $ and $ \om $.
\end{thm}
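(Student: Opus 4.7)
Fix $x_0,y_0\in\om$ and write $r=|x_0-y_0|$. The plan is to carry out the two-scale expansion for $\mathcal{L}_\va$ paired with a duality argument, essentially as in \cite{Kenig2,Xu1} for the $d\geq 3$ case, substituting the two-dimensional Green function estimates (Theorems~\ref{Green} and \ref{Lipschitz estimates Green}) wherever the proofs there invoke the Sobolev embedding $W^{1,2}\subset L^{2d/(d-2)}$. In the regime $r\leq 4\va$ the pointwise bounds in Theorems~\ref{Green} and \ref{Lipschitz estimates Green} immediately give $|G_\va(x_0,y_0)-G_0(x_0,y_0)|\leq C\leq C\va r^{1-d}$ (for $d=2$ the logarithmic growth of $G_0$ is absorbed into $C\leq C\va r^{-1}$ when $r\lesssim\va$), so from now on assume $r>4\va$.

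\textbf{Duality reduction.} Let $\psi\in C_0^\infty(B(y_0,r/8);\mathbb{R}^m)$ satisfy $\int\psi=1$ with suitable normalization, and let $u_\va,u_0\in H_0^1(\om;\mathbb{R}^m)$ solve $\mathcal{L}_\va u_\va=\psi$ and $\mathcal{L}_0u_0=\psi$ respectively. By the representation formula \eqref{Representation formula},
\[
u_\va(x)-u_0(x)=\int_\om\bigl(G_\va(x,y)-G_0(x,y)\bigr)\psi(y)\,dy.
\]
The off-diagonal continuity of $G_\va-G_0$ (from the Lipschitz estimates of Theorem~\ref{Lipschitz estimates Green} and the analogous statements for $G_0$) reduces the task to establishing the uniform bound $\|u_\va-u_0\|_{L^\infty(B(x_0,r/8))}\leq C\va r^{1-d}$; letting $\psi$ concentrate at $y_0$ then yields the pointwise conclusion at $(x_0,y_0)$.

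\textbf{Two-scale expansion and energy estimate.} Choose a cutoff $\eta_\va\in C_0^\infty(\om)$ equal to $1$ on $\om_\va=\{\delta\geq 3\va\}$ with $|\nabla\eta_\va|\leq C\va^{-1}$, and define
\[
w_\va(x)=u_\va(x)-u_0(x)-\va\,\eta_\va(x)\bigl[\chi_k^{\cdot\beta}(x/\va)\pa_ku_0^\beta(x)+\chi_0^{\cdot\beta}(x/\va)u_0^\beta(x)\bigr].
\]
Using the cell problems \eqref{Corrector 1}-\eqref{Corrector 2}, the auxiliary flux functions $b_{ik}, b_{i0}$ and potentials $\Theta_k$ from \eqref{b function}-\eqref{theta function 2}, together with the $L^\infty$ bounds \eqref{boundedness of chi}-\eqref{boundedness of Theta}, one recasts $\mathcal{L}_\va w_\va$ as the divergence of an $O(\va)$ interior flux plus a boundary-layer contribution supported in $\{\delta<3\va\}$. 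Theorem~\ref{energy} combined with the interior $W^{2,2}$-regularity of $u_0$ (from the $C^{1,1}$ regularity of $\partial\om$) yields, after bounding the boundary strip via the Lipschitz norm of $u_0$ inside the layer,
\[
\|w_\va\|_{H^1_0(\om)}\leq C\va\|u_0\|_{W^{2,2}(\om)}+C\va\|\nabla u_0\|_{L^\infty(\om)}\leq C\va\|\psi\|_{L^2(\om)}.
\]

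\textbf{From $H^1$ control to pointwise estimate: the main obstacle.} Because $\psi\equiv 0$ on $B(x_0,r/2)$, we have $\mathcal{L}_0u_0=0$ there, and standard interior Schauder/Lipschitz estimates for $\mathcal{L}_0$ combined with the pointwise estimates for $G_0$ yield $|u_0(x)|+r|\nabla u_0(x)|+r^2|\nabla^2u_0(x)|\leq Cr^{2-d}$ (respectively $\leq C(1+|\ln r|)$ in $d=2$) on $B(x_0,r/4)$. Writing $u_\va-u_0=w_\va+\va\chi_k^{\cdot\beta}(\cdot/\va)\pa_ku_0^\beta+\va\chi_0^{\cdot\beta}(\cdot/\va)u_0^\beta$ inside $\om_\va$, noting that $\eta_\va\equiv 1$ on $B(x_0,r/4)$ since $r>4\va$, and applying the localized Lipschitz estimate of Theorem~\ref{localization Lipschitz estimates} to $w_\va$ on $B(x_0,r/4)$, a rescaling argument gives $\|w_\va\|_{L^\infty(B(x_0,r/8))}\leq Cr^{-d/2}\|w_\va\|_{L^2(B(x_0,r/4))}+C\va r^{1-d}$, while $\|\chi_k\|_\infty\leq C$ together with the Lipschitz bound on $u_0$ forces the corrector contributions $\va\chi_k(\cdot/\va)\pa_k u_0$ and $\va\chi_0(\cdot/\va)u_0$ to be bounded by $C\va r^{1-d}$ directly. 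The main difficulty is obtaining the sharp boundary-layer estimate that upgrades the naive $O(\va^{1/2})$ $H^1$-bound for $w_\va$ to the $O(\va)$ scale above: this is the step in which the $d=2$ Green function estimates from Theorem~\ref{Lipschitz estimates Green} replace the Sobolev embedding used in the $d\geq 3$ proof of \cite{Xu1}.
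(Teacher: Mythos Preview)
Your outline diverges from the paper's argument in a way that leaves a genuine gap. The paper does \emph{not} use a boundary cutoff $\eta_\va$ in the two-scale expansion. It takes the full corrector
\[
w_\va=u_\va-u_0-\va\sum_{k=0}^d\chi_k(x/\va)\pa_ku_0
\]
and splits $w_\va=v_{\va,1}+v_{\va,2}$ on $\om$, where $v_{\va,1}\in W_0^{1,2}(\om)$ solves $\mathcal{L}_\va(v_{\va,1})=\mathcal{L}_\va(w_\va)$ and $v_{\va,2}$ solves $\mathcal{L}_\va(v_{\va,2})=0$ with $v_{\va,2}=-\va\chi_k(\cdot/\va)\pa_ku_0$ on $\pa\om$. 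The point is that by Lemma~\ref{Lemma 5.1} and the antisymmetry of the flux correctors $E_{jik}$ (Lemma~\ref{Lemma 2.8}), $\mathcal{L}_\va(w_\va)$ is genuinely of size $\va$ in $H^{-1}$, so $\|\nabla v_{\va,1}\|_{L^2(\om)}\leq C\va\|F\|_{L^2}$ with no boundary-layer loss. The boundary data are then controlled by the Agmon--Miranda maximum principle \eqref{ww32}: $\|v_{\va,2}\|_{L^\infty(\om)}\leq C\va\|\nabla u_0\|_{L^\infty(\pa\om)}$. This is where the Lipschitz estimates for $G_\va$ actually enter, since \eqref{ww32} is proved via the Poisson kernel built from $\nabla_yG_\va$.

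Your cutoff scheme, by contrast, produces a boundary-layer flux $\va\nabla\eta_\va\cdot\chi\nabla u_0$ whose $L^2$ norm is only $C\va^{1/2}\|\nabla u_0\|_{L^\infty}$ (the strip has measure $\sim\va$, and $|\nabla\eta_\va|\sim\va^{-1}$). Your displayed bound $\|w_\va\|_{H_0^1}\leq C\va\|\psi\|_{L^2}$ is therefore not what the argument you describe yields; you yourself flag this as ``the main difficulty'' at the end, but the sentence ``this is the step in which the $d=2$ Green function estimates \ldots\ replace the Sobolev embedding'' does not resolve it. Green-function pointwise bounds do not upgrade an $O(\va^{1/2})$ energy estimate to $O(\va)$.

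Two further points. First, the actual $d=2$ modification in the paper is not a Green-function argument at all: it is the observation that since $W^{1,\bar p}$ estimates hold for some $\bar p<2$ (Theorem~\ref{W1p 2}), one gets $\|\nabla v_{\va,1}\|_{L^{\bar p}(\om)}\leq C\va\|F\|_{L^{\bar p}}$, and then Sobolev embedding $W_0^{1,\bar p}\subset L^q$ with $\tfrac{1}{q}=\tfrac{1}{\bar p}-\tfrac{1}{2}$ followed by H\"older on $\om(x_0,r)$ recovers the missing factor of $r$ (equations~\eqref{ww54}--\eqref{ww55}). Second, you invoke Theorem~\ref{localization Lipschitz estimates} on $w_\va$, but that theorem requires the right-hand side in the form $F\in L^p$ with no divergence part, whereas $\mathcal{L}_\va(w_\va)$ has an essential $\operatorname{div}(\va\text{-flux})$ structure. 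The paper handles the passage to $L^\infty$ via the tailored comparison Lemma~\ref{ww33}, applied once to $u_\va-u_0$ near $x_0$ and once (for the adjoint operators) to $G_\va(x_0,\cdot)-G_0(x_0,\cdot)$ near $y_0$.
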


\begin{lem}[\cite{Xu2}, Lemma 2.9]\label{Lemma 2.8}
There exist $ E_{jik}^{\alpha\gamma}\in W_{\operatorname{per}}^{1,2}(Y) $ with $ k=0,1,\ldots,d $, such that
\begin{align}
b_{i k}^{\alpha \gamma}=\pa_j\left\{E_{jik}^{\alpha \gamma}\right\}\quad\text{and }\quad E_{jik}^{\alpha \gamma}=-E_{ijk}^{\alpha \gamma}\label{E function}
\end{align}
where $1 \leq i, j \leq d $ and $1 \leq \alpha, \gamma \leq m$. Moreover, if $\chi_{k} $ is H\"{o}lder continuous, then $ E_{jik}^{\alpha\gamma}\in L^{\infty}(Y) $.
\end{lem}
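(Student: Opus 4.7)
The plan is to construct $E_{jik}^{\alpha\gamma}$ by the standard flux-corrector recipe: solve a periodic Poisson problem for an auxiliary potential $f_{ik}^{\alpha\gamma}$, then antisymmetrise its gradient. The whole argument rests on two structural properties of $b_{ik}^{\alpha\gamma}$ which I would verify first. \emph{Mean zero on $Y$:} by the very definition of the homogenised coefficients in \eqref{Homogenization coefficients},
\begin{align*}
\int_Y b_{ik}^{\alpha\gamma}(y)\,dy = \widehat{a}_{ik}^{\alpha\gamma}-\int_Y\bigl[a_{ik}^{\alpha\gamma}(y)+a_{ij}^{\alpha\beta}(y)\partial_j\chi_k^{\beta\gamma}(y)\bigr]dy = 0,
\end{align*}
and the analogous identity holds for $k=0$ using the formula for $\widehat{V}_i^{\alpha\gamma}$. \emph{Divergence-free in the first index:} the corrector equations \eqref{Corrector 1} and \eqref{Corrector 2} say exactly
\begin{align*}
\partial_i\bigl[a_{ik}^{\alpha\gamma}(y)+a_{ij}^{\alpha\beta}(y)\partial_j\chi_k^{\beta\gamma}(y)\bigr]=0,\qquad \partial_i\bigl[V_i^{\alpha\gamma}(y)+a_{ij}^{\alpha\beta}(y)\partial_j\chi_0^{\beta\gamma}(y)\bigr]=0,
\end{align*}
and subtracting the constants $\widehat{a}_{ik}^{\alpha\gamma}$ (resp.\ $\widehat{V}_i^{\alpha\gamma}$) from the bracket gives $\partial_i b_{ik}^{\alpha\gamma}=0$ in $\mathcal{D}'(\mathbb{R}^d)$ for every $k=0,1,\dots,d$.

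With the two compatibility conditions in hand, for each fixed $i,k,\alpha,\gamma$ I would solve the periodic Poisson problem
\begin{align*}
\Delta f_{ik}^{\alpha\gamma}=b_{ik}^{\alpha\gamma}\ \text{in }Y,\qquad f_{ik}^{\alpha\gamma}\in W^{1,2}_{\operatorname{per}}(Y),\qquad \int_Y f_{ik}^{\alpha\gamma}\,dy=0.
\end{align*}
Existence and uniqueness follow from Lax--Milgram (the Fredholm condition is precisely the mean-zero property above), and standard elliptic regularity upgrades the solution to $f_{ik}^{\alpha\gamma}\in W^{2,2}_{\operatorname{per}}(Y)$. I then define
\begin{align*}
E_{jik}^{\alpha\gamma}:=\partial_j f_{ik}^{\alpha\gamma}-\partial_i f_{jk}^{\alpha\gamma}.
\end{align*}
The antisymmetry $E_{jik}^{\alpha\gamma}=-E_{ijk}^{\alpha\gamma}$ is immediate, and
\begin{align*}
\partial_j E_{jik}^{\alpha\gamma}=\Delta f_{ik}^{\alpha\gamma}-\partial_i\bigl(\partial_j f_{jk}^{\alpha\gamma}\bigr)=b_{ik}^{\alpha\gamma}-\partial_i g_k^{\alpha\gamma},
\end{align*}
where $g_k^{\alpha\gamma}:=\partial_j f_{jk}^{\alpha\gamma}$. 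Differentiating the Poisson equation in the index $i$ and using the divergence-free property $\partial_i b_{ik}^{\alpha\gamma}=0$ shows $\Delta g_k^{\alpha\gamma}=0$ on $Y$; since $g_k^{\alpha\gamma}$ is periodic and $\int_Y g_k^{\alpha\gamma}\,dy=0$ (as the divergence of a $Y$-periodic field), the Liouville theorem for harmonic periodic functions forces $g_k^{\alpha\gamma}\equiv 0$, establishing the desired identity $b_{ik}^{\alpha\gamma}=\partial_j E_{jik}^{\alpha\gamma}$.

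Finally, for the $L^\infty$ assertion, under the Hölder regularity hypothesis (which is automatic when $A\in\Lambda(\mu,\tau,\kappa)$ by the Schauder bound \eqref{boundedness of chi}) one has $b_{ik}^{\alpha\gamma}\in C^{0,\tau}(Y)$. Periodic Schauder theory applied to $\Delta f_{ik}^{\alpha\gamma}=b_{ik}^{\alpha\gamma}$ then upgrades $f_{ik}^{\alpha\gamma}$ to $C^{2,\tau}(Y)$, so $\nabla f_{ik}^{\alpha\gamma}\in L^\infty(Y)$ and consequently $E_{jik}^{\alpha\gamma}\in L^\infty(Y)$, with the bound depending only on $\mu,\tau,\kappa,m,d$. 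The main obstacle is purely bookkeeping: the ansatz $E_{jik}=\partial_j f_{ik}-\partial_i f_{jk}$ produces a spurious term $\partial_i(\partial_j f_{jk})$ in the divergence, and the whole construction hinges on recognising that this term is a gradient of a function which is both harmonic and mean-zero on the torus, hence identically zero. Everything else is routine elliptic PDE on $Y$.
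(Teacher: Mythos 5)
Your construction follows the same route as \cite{Xu2}, Lemma~2.9 (equivalently Lemma~3.1.1 of \cite{Shen1}): verify the mean-zero and divergence-free properties of $b_{\cdot k}$, solve the periodic Poisson problem $\Delta f_{ik}=b_{ik}$ with zero mean, set $E_{jik}=\partial_j f_{ik}-\partial_i f_{jk}$, and kill the spurious term $\partial_i(\partial_j f_{jk})$ by noting that $\partial_j f_{jk}$ is harmonic, periodic, and mean-zero, hence identically zero. All of these steps are correct and match the cited proof.

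The one place where you depart from the lemma as stated is the $L^\infty$ assertion. Your Schauder step passes through $b_{ik}\in C^{0,\tau}(Y)$, which requires $\nabla\chi_k$ to be H\"{o}lder continuous; but the lemma only assumes that $\chi_k$ itself is H\"{o}lder continuous, under which $b_{ik}$ is merely in $L^2(Y)$ and Schauder theory for $\Delta f_{ik}=b_{ik}$ is not available. The standard way to bridge this (and what the cited proofs do) is a scaling argument rather than Schauder: a Caccioppoli inequality for $\chi_k$ on a ball $B_r$ converts the H\"{o}lder modulus $[\chi_k]_{C^{0,\sigma}}$ into the decay estimate $\left(\dashint_{B_r}|\nabla\chi_k|^{2}\right)^{1/2}\leq Cr^{\sigma-1}$, and then a harmonic-replacement/Campanato iteration for $f_{ik}$ yields H\"{o}lder continuity of $\nabla f_{ik}$, and in particular boundedness of $E_{jik}$. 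Since in this paper the lemma is only invoked under $A\in\Lambda(\mu,\tau,\kappa)$, where \eqref{boundedness of chi} gives $\nabla\chi_k\in C^{0,\tau}$ directly, your shortcut produces exactly the bound that the paper uses; but as written it proves the boundedness under a stronger hypothesis than the one recorded in the lemma.
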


\begin{lem}[\cite{Xu2}, Lemma 5.1]\label{Lemma 5.1}
Suppose that $ u_{\va},u_{0}\in H^{1}(\om;\mathbb{R}^{m}) $ satisfy $ \mathcal{L}_{\va}(u_{\va})=\mathcal{L}_{0}(u_{0}) $ in $ \om $. Let
\begin{align}
w_{\va}^{\beta}=u_{\va}^{\beta}-u_{0}^{\beta}-\va \sum_{k=0}^{d}\chi_{k}^{\beta\gamma}(x/\va)\pa_{k}u_0^{\gamma},\label{ww46}
\end{align}
Then we have
\begin{align}
[\mathcal{L}_{\va}\left(w_{\va}\right)]^{\al}(x)&=-\pa_i\left\{\mathcal{K}_{i}^{\al}(x)-\va\left(\mathcal{I}_{i}^{\al}(x)+\mathcal{J}_{i}^{\al}(x)\right)\right\}-\va(\mathcal{M}^{\al}(x)+\mathcal{N}^{\al}(x)),\label{ww47}
\end{align}
where
\begin{align}
\mathcal{I}_{i}^{\al}(x)&=a_{ij}^{\al\beta}(x/\va)\sum_{k=0}^{d}\chi_{k}^{\beta \gamma}(x/\va)\pa_{jk}^2u_0^{\gamma}+V_{i}^{\al\beta}(x/\va)\sum_{k=0}^{d}\chi_{k}^{\beta \gamma}(x/\va)\pa_ku_0^{\gamma},\nonumber\\
 \quad \mathcal{J}_{i}^{\al}(x)&=\sum_{k=0}^{d} \pa_i\Theta_{k}^{\al\gamma}(x/\va) \pa_ku_0^{\gamma}, \quad \mathcal{K}_{i}^{\al}(x)=\sum_{j=0}^{d}b_{ij}^{\al \gamma}(x/\va)\pa_{j}u_0^{\gamma},\nonumber\\
\mathcal{M}^{\al}(x)&=\sum_{k=0}^{d}\left[\pa_i\Theta_{k}^{\al\gamma}(x/\va)+B_{i}^{\al\beta}(x/\va)\chi_{k}^{\beta\gamma}(x/\va)\right]\pa_{ik}^2u_0^{\gamma},\nonumber\\\mathcal{N}^{\al}(x)&=\left[c^{\al\beta}(x/\va)+\lambda\delta^{\al\beta}\right]\sum_{k=0}^{d} \chi_{k}^{\beta\gamma}(x/\va)\pa_ku_0^{\gamma}.\nonumber
\end{align}
\end{lem}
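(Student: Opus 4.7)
The plan is to carry out a direct computation of $\mathcal{L}_\va(w_\va)$, using the equation $\mathcal{L}_\va(u_\va)=\mathcal{L}_0(u_0)$ to replace $\mathcal{L}_\va(u_\va)$ by $\mathcal{L}_0(u_0)$, then expand $\mathcal{L}_\va$ applied to the corrector ansatz $u_0+\va\chi_k(x/\va)\pa_ku_0^{\gamma}$, and finally group the resulting terms via the corrector equations \eqref{Corrector 1}--\eqref{Corrector 2} and the auxiliary identities \eqref{b function}--\eqref{theta function 2}. Writing $w_\va^\beta=u_\va^\beta-u_0^\beta-\va\chi_k^{\beta\gamma}(x/\va)\pa_ku_0^\gamma$, the starting point is
\begin{align*}
\mathcal{L}_\va(w_\va)=\mathcal{L}_0(u_0)-\mathcal{L}_\va(u_0)-\va\,\mathcal{L}_\va\bigl(\chi_k(x/\va)\pa_ku_0\bigr).
\end{align*}
I will expand each of the four constituent pieces (the divergence term with $A$, the divergence term with $V$, the $B\nabla$ term, and the $c+\lambda I$ term) on both the $\mathcal{L}_\va$ and $\mathcal{L}_0$ sides.

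The key algebraic step concerns the principal divergence piece. Computing $-\pa_i[a_{ij}^{\al\beta}(x/\va)\pa_j(u_0^\beta+\va\chi_k^{\beta\gamma}(x/\va)\pa_ku_0^\gamma)]$ produces the inner derivatives $\pa_j\chi_k^{\beta\gamma}(x/\va)$ (with the $\va^{-1}$ canceling the explicit $\va$), plus $O(\va)$ residues of the form $\va\,a_{ij}^{\al\beta}\chi_k^{\beta\gamma}\pa_{jk}^2u_0^\gamma$. Adding the analogous $V$-term (whose expansion contributes $V_i^{\al\beta}\pa_j\chi_0^{\beta\gamma}$ at leading order and $\va V_i^{\al\beta}\chi_k^{\beta\gamma}\pa_ku_0^\gamma$ at lower order) and subtracting $-\pa_i(\widehat a_{ij}^{\al\beta}\pa_ju_0^\beta+\widehat V_i^{\al\beta}u_0^\beta)$, the definition \eqref{b function} of $b_{ij}^{\al\gamma}$ collapses the $O(1)$ parts exactly into $\pa_i\{b_{ij}^{\al\gamma}(x/\va)\pa_ju_0^\gamma\}=\pa_i\mathcal{K}_i^\al$, while the $O(\va)$ residues are precisely $\va\,\pa_i\mathcal{I}_i^\al$ after noting which factors live under the outer $\pa_i$. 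Here one must be careful to keep the outer $\pa_i$ intact; the corrector equations \eqref{Corrector 1}--\eqref{Corrector 2} are \emph{not} invoked directly at this stage — they are encoded implicitly in the fact that $b_{ij}$ is a divergence of $E_{jij}$ via Lemma \ref{Lemma 2.8}, which is what makes the cell average vanish.

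For the lower-order operator $B(x/\va)\nabla+c(x/\va)+\lambda I$, a similar expansion gives (at leading order) $[B_i^{\al\beta}(x/\va)\pa_ju_0^\beta\pa_j\chi_k^{\cdots}+B_i^{\al\beta}\pa_i\chi_0^{\cdots}+c^{\al\beta}]\cdot(\cdots)$ minus the homogenized counterparts $\widehat B_i^{\al\gamma}\pa_iu_0^\gamma+\widehat c^{\al\gamma}u_0^\gamma$. Using definitions \eqref{theta function 1}--\eqref{theta function 2}, the periodic functions $\Theta_k^{\al\gamma}$ satisfy $\Delta\Theta_k^{\al\gamma}=$ precisely the residual combination of $\widehat B,B,\widehat c,c$ and $B_j\pa_j\chi_k^{\beta\gamma}$. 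Writing $\Delta\Theta_k=\pa_i\pa_i\Theta_k$ converts this residual into $\pa_i\{\pa_i\Theta_k^{\al\gamma}(x/\va)\pa_ku_0^\gamma\}$ minus the correction term $\pa_i\Theta_k^{\al\gamma}(x/\va)\pa_{ik}^2u_0^\gamma$; the former is exactly $\va\,\pa_i\mathcal{J}_i^\al$ after accounting for the factor of $\va$ produced when differentiating $\Theta_k(x/\va)$, and the latter feeds into $\mathcal{M}^\al$. The remaining uncanceled pieces, namely $\va(c^{\al\beta}(x/\va)+\lambda\delta^{\al\beta})\chi_k^{\beta\gamma}\pa_ku_0^\gamma$ from the zeroth-order operator acting on the corrector perturbation, together with $\va B_i^{\al\beta}(x/\va)\chi_k^{\beta\gamma}(x/\va)\pa_{ik}^2u_0^\gamma$ from the first-order operator, assemble as $\va(\mathcal{M}^\al+\mathcal{N}^\al)$ with the precise expressions in the statement.

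The main obstacle is purely organizational bookkeeping: one must carefully separate, in each of the four operator pieces, which terms are $O(1)$ and enter $\mathcal{K}$, which are $O(\va)$ divergence terms and enter $\va(\mathcal{I}+\mathcal{J})$, and which are $O(\va)$ non-divergence residues and enter $\va(\mathcal{M}+\mathcal{N})$. In particular, tracking whether a factor $\va$ gets consumed by differentiating $\chi_k(x/\va)$ or $\Theta_k(x/\va)$ — and correspondingly whether a term lives inside or outside the outer $\pa_i$ — is where sign and factor errors most easily creep in. Once the expansion is cleanly organized and the identities \eqref{b function}, \eqref{theta function 1}, \eqref{theta function 2} are applied, the formula \eqref{ww47} drops out term by term.
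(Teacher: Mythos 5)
The paper itself does not prove this lemma; it is imported verbatim from \cite{Xu2}, Lemma~5.1. Your proposed argument is the standard direct verification and is correct in its architecture: expand $\mathcal{L}_\va$ applied to the two-scale ansatz $u_0^\beta+\va\sum_{k}\chi_k^{\beta\gamma}(x/\va)\pa_k u_0^\gamma$, subtract $\mathcal{L}_0(u_0)$, and regroup the $O(1)$ and $O(\va)$ contributions using the definitions \eqref{b function} and \eqref{theta function 1}--\eqref{theta function 2}; this produces \eqref{ww47} term by term, and you are right that the corrector equations \eqref{Corrector 1}--\eqref{Corrector 2} need not be invoked here since the identity is purely algebraic in $b_{ij}$ and $\Theta_k$. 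One small point of confusion in your aside should be corrected: Lemma~\ref{Lemma 2.8} does not \emph{make} the cell average of $b_{ij}$ vanish. Rather, the corrector equations together with the definitions of $\widehat A,\widehat V$ force $b_{ij}$ to be divergence-free with zero cell average, and it is that fact which licenses Lemma~\ref{Lemma 2.8} (the representation $b_{ij}=\pa_l E_{lij}$ with $E$ antisymmetric). In any case neither the corrector equations nor Lemma~\ref{Lemma 2.8} is used in deriving \eqref{ww47} itself -- they enter only downstream, in \eqref{E formula} and the subsequent estimates -- so the misstatement does not affect the validity of the computation you outline.
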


\begin{rem}
From Lemma \ref{Lemma 5.1} and $ \eqref{E function} $, we have
\begin{align}
\pa_i(\mathcal{K}_{i}^{\al}(x))=\pa_i\left(\sum_{j=0}^db_{ij}^{\al\gamma}(x/\va)\pa_ju_0^{\gamma}\right)=\va\pa_j\left(\sum_{k=0}^d E_{jik}^{\al\gamma}(x/\va)\pa_ku_0^{\gamma}\right).\label{E formula}
\end{align}
\end{rem}

\begin{lem}\label{infty b}
Suppose that $ A\in\Lambda(\mu,\tau,\kappa),V,B $ satisfy $ \eqref{Periodicity} $ and $ \eqref{regularity} $, $ c $ satisfies $ \eqref{Periodicity} $ and $ \eqref{Boundedness} $, $\lambda \geq \lambda_{0} $ and $ \om $ is a bounded $ C^{1,\eta} $ $ (0<\eta<1) $ domain in $ \mathbb{R}^d $ with $ d\geq 2 $.  Then, we have
\begin{align}
\|u_{\va}\|_{L^{\infty}(\om(x_0,r))}\leq C\|f\|_{L^{\infty}(\Delta(x_0,3r))}+C\dashint_{\om(x_0,3r)}|u_{\va}|,\label{infty b e}
\end{align}
where $ \mathcal{L}_{\va}(u_{\va})=0 $ in $ \om(x_0,3r) $,  $ u_{\va}=f $ on $ \Delta(x_0,3r) $, $ x_0\in\om $, $ 0<r<\diam(\om) $ and $ C $ depends only on $ \mu,\tau,\kappa,\lambda,d,m,\eta,\om $.
\end{lem}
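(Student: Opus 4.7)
The plan is to reduce the problem to a solution vanishing on $\Delta(x_0,3r)$, and then invoke the boundary $L^\infty$ estimate $\eqref{infty interior estimates}$.

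First I would extend the datum $f$ from $\Delta(x_0,3r)$ to all of $\pa\om$ by zero, obtaining $\tilde f\in L^\infty(\pa\om;\mathbb{R}^m)$ with $\|\tilde f\|_{L^\infty(\pa\om)}=\|f\|_{L^\infty(\Delta(x_0,3r))}$. Let $F$ denote the unique weak solution of the global Dirichlet problem $\mathcal{L}_\va(F)=0$ in $\om$, $F=\tilde f$ on $\pa\om$. Applying the Agmon--Miranda maximum principle $\eqref{ww32}$ from Theorem \ref{Nontangential maximal function estimates}, whose hypotheses coincide with those of the present lemma, yields $\|F\|_{L^\infty(\om)}\leq C\|f\|_{L^\infty(\Delta(x_0,3r))}$.

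Then I would set $v_\va=u_\va-F$ on $\om(x_0,3r)$. Since $u_\va$ and $F$ both solve $\mathcal{L}_\va(\cdot)=0$ there and both equal $f$ on $\Delta(x_0,3r)$, the difference satisfies $\mathcal{L}_\va(v_\va)=0$ in $\om(x_0,3r)$ with $v_\va=0$ on $\Delta(x_0,3r)$. Applying $\eqref{infty interior estimates}$ with $\overline{p}=1$ and vanishing source terms to $v_\va$, at a scale between radii $3r/2$ and $3r$ (permissible via the two-parameter generalization noted after Lemma \ref{Caccioppoli}), yields $\|v_\va\|_{L^\infty(\om(x_0,r))}\leq C\dashint_{\om(x_0,3r)}|v_\va|$. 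The triangle inequality $|v_\va|\leq|u_\va|+|F|$ together with the bound on $\|F\|_{L^\infty(\om)}$ then delivers $\eqref{infty b e}$.

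The principal obstacle is the appeal to the Agmon--Miranda principle: it is itself a consequence of the Lipschitz Green function estimates of Theorem \ref{Lipschitz estimates Green} and the Poisson kernel $\eqref{Poisson kernel}$, so this lemma ultimately leans on the Green function machinery developed earlier. Should one wish to avoid this dependence, a more self-contained route would be to decompose $u_\va=u_1+u_2$, where $u_1$ vanishes on $\Delta(x_0,3r)$ and carries the data of $u_\va$ on $\om\cap\pa B(x_0,3r)$ (handled by $\eqref{infty interior estimates}$ with $\overline{p}=1$), while $u_2$ vanishes on $\om\cap\pa B(x_0,3r)$ and carries $f$ on $\Delta(x_0,3r)$ (handled by a geometric iteration over shrinking boundary balls).
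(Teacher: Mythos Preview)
Your proposal is correct and essentially matches the paper's approach. The paper's own proof simply refers to Lemma 6.3.2 in \cite{Shen1} and says to use $\eqref{ww32}$ (Agmon--Miranda) together with $\eqref{infty interior estimates}$, which is precisely the pair of tools you invoke; your decomposition $u_\va=F+v_\va$ with $F$ the global $L^\infty$-Dirichlet solution and $v_\va$ handled by the local boundary estimate is the standard execution of that outline. Your concern about circularity is unnecessary here: Lemma \ref{infty b} sits in Section 5 (convergence rates), logically downstream of Theorems \ref{Lipschitz estimates Green} and \ref{Nontangential maximal function estimates}, so leaning on $\eqref{ww32}$ is permitted.
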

\begin{proof}
The proof is almost the same as Lemma 6.3.2 in \cite{Shen1}. We only need to change the operator $ L_{\va} $ in \cite{Shen1} to the operator $ \mathcal{L}_{\va} $ with lower order terms and use $ \eqref{ww32} $ and $ \eqref{infty interior estimates} $.
\end{proof}

\begin{lem}\label{ww33}
Suppose that $ A\in\Lambda(\mu,\tau,\kappa),V,B $ satisfy $ \eqref{Periodicity} $ and $ \eqref{regularity} $, $ c $ satisfies $ \eqref{Periodicity} $ and $ \eqref{Boundedness} $, $\lambda \geq \lambda_{0} $ and $ \om $ is a bounded $ C^{1,\eta} $ $ (0<\eta<1) $ domain in $ \mathbb{R}^d $ with $ d\geq 2 $.  Let $ u_{\va}\in W^{1,2}(\om(x_0,4r);\mathbb{R}^m) $ and $ u_0\in W^{2,p}(\om(x_0,4r);\mathbb{R}^m) $ for some $ d<p<\infty $. Suppose that
\begin{align}
\mathcal{L}_{\va}(u_{\va})=\mathcal{L}_0(u_0)\quad\text{in }\om(x_0,4r)\quad\text{and}\quad u_{\va}=u_0\quad\text{on }\Delta(x_0,4r).\nonumber
\end{align}
Then, we have
\begin{align}
&\|u_{\va}-u_0\|_{L^{\infty}(\om(x_0,r))}\nonumber\\
&\quad\quad\leq C\dashint_{\om(x_0,4r)}|u_{\va}-u_0|+C\va\|\nabla u_0\|_{L^{\infty}(\om(x_0,4r))}+C\va r^{1-\frac{d}{p}}\left\|\nabla^2u_0\right\|_{L^p(\om(x_0,4r))},\label{ww34}
\end{align}
where $ x_0\in\om $, $ 0<r<\diam(\om) $ and $ C $ depends only on $ \mu,\tau,\kappa,\lambda,d,m,\eta,\om $.
\end{lem}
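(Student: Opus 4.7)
The proof plan is to study the first-order two-scale expansion
\[
w_\va^\beta := u_\va^\beta - u_0^\beta - \va \sum_{k=0}^d \chi_k^{\beta\gamma}(x/\va)\,\partial_k u_0^\gamma
\]
(with the convention $\partial_0 u_0 = u_0$) as in $\eqref{ww46}$, and to prove the desired pointwise bound for $w_\va$ first; the estimate $\eqref{ww34}$ on $u_\va - u_0$ then follows from the triangle inequality together with the uniform bound $\|\chi_k\|_\infty \leq C$ of $\eqref{boundedness of chi}$, which absorbs the corrector tail into an extra $C\va \|\nabla u_0\|_{L^\infty(\om(x_0,4r))}$ on the right-hand side.

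\textbf{Main steps.} Using Lemma \ref{Lemma 5.1} followed by the flux-corrector identity $\eqref{E formula}$ (with $E_{jik}$ supplied by Lemma \ref{Lemma 2.8}, which is in $L^\infty$ since $\chi_k$ is H\"older continuous under $A \in \Lambda(\mu,\tau,\kappa)$), I will rewrite the non-small term $\partial_i \mathcal{K}_i$ as a divergence, so that
\[
\mathcal{L}_\va(w_\va) = \operatorname{div}(\va\, g_\va) + \va\, G_\va \quad \text{in } \om(x_0, 4r),
\]
with pointwise bounds $|g_\va|,|G_\va| \leq C(|u_0| + |\nabla u_0| + |\nabla^2 u_0|)$ coming from $\eqref{boundedness of chi}$, $\eqref{boundedness of Theta}$ and the $L^\infty$ bound on $E$. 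Since $u_\va = u_0$ on $\Delta(x_0, 4r)$, the trace satisfies
\[
\|w_\va\|_{L^\infty(\Delta(x_0, 4r))} \leq C\va\,\|\nabla u_0\|_{L^\infty(\om(x_0, 4r))}.
\]
I then choose an auxiliary $C^{1,\eta}$ subdomain $\om'$ with $\om(x_0, 2r) \subset \om' \subset \om(x_0, 3r)$ that agrees with $\om$ near $\Delta(x_0, 2r)$, and decompose $w_\va = w_\va^{(1)} + w_\va^{(2)}$ on $\om'$, where $w_\va^{(1)} \in W_0^{1,2}(\om';\mathbb{R}^m)$ solves $\mathcal{L}_\va w_\va^{(1)} = \operatorname{div}(\va g_\va) + \va G_\va$ with zero boundary data and $w_\va^{(2)} := w_\va - w_\va^{(1)}$ is $\mathcal{L}_\va$-harmonic on $\om'$ with $w_\va^{(2)} = w_\va$ on $\partial\om'$. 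Applying Theorem \ref{W1p} to $w_\va^{(1)}$ with some $p > d$, followed by Morrey's embedding with scaling tracked as in $\eqref{interp}$/$\eqref{ww10}$, produces
\[
\|w_\va^{(1)}\|_{L^\infty(\om')} \leq C\va\,\|\nabla u_0\|_{L^\infty(\om(x_0, 4r))} + C\va\, r^{1-d/p}\,\|\nabla^2 u_0\|_{L^p(\om(x_0, 4r))},
\]
while Lemma \ref{infty b} applied on $\om'$ to $w_\va^{(2)}$ gives
\[
\|w_\va^{(2)}\|_{L^\infty(\om(x_0, r))} \leq C\|w_\va\|_{L^\infty(\Delta(x_0,3r))} + C\dashint_{\om'}|w_\va^{(2)}|,
\]
whose right-hand side is dominated by $C\va\|\nabla u_0\|_{L^\infty} + C\dashint_{\om(x_0, 4r)}|u_\va - u_0|$ after unwinding the splitting and the definition of $w_\va$.

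\textbf{Main obstacle.} The principal technical issue is that $\om(x_0, 3r)$ is only Lipschitz, with corners where $\partial\om$ meets $\partial B(x_0, 3r)$, so neither the global $W^{1,p}$ estimate nor the Agmon-Miranda bound of Lemma \ref{infty b} is quotable on it directly. The interposed $C^{1,\eta}$ domain $\om'$ above resolves this, but it has to be chosen as a fixed model shape dilated by $r$ and positioned at $x_0$, so that the constants produced by Theorem \ref{W1p} and Lemma \ref{infty b} on $\om'$ are uniform in $r$; this is the analogue of the construction used in passing from interior to boundary estimates throughout \cite{Xu1} and \cite{Shen1}. A secondary delicate point is keeping the precise $r^{1-d/p}$ scaling on the $\nabla^2 u_0$ contribution, which requires a rescaling argument of the type used in Remark 2.7 and in the proof of $\eqref{Holder interior estimates}$-$\eqref{infty interior estimates}$, together with the quantified Sobolev-Morrey inequality $\eqref{ww10}$ so that all geometric dependence reduces to the dilation factor $r$.
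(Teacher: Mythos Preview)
Your overall scheme is correct and runs parallel to the paper's: the same two-scale ansatz $w_\va$ from $\eqref{ww46}$, the same interposed $C^{1,\eta}$ subdomain, and the same splitting into a zero-trace piece plus an $\mathcal{L}_\va$-harmonic piece, the latter handled via Lemma \ref{infty b}. The genuine difference is how you bound the zero-trace piece $w_\va^{(1)}$. You apply the global $W^{1,p}$ estimate (Theorem \ref{W1p}) on $\om'$ and then Morrey's inequality; the paper instead writes $w_{\va,1}(x)=\int_{\widetilde\om}\widetilde G_\va(x,y)\,\mathcal{L}_\va(w_\va)(y)\,dy$ using the Green matrix of $\mathcal{L}_\va$ on the auxiliary domain, integrates by parts via $\eqref{E formula}$, and then plugs in the pointwise size bounds $\eqref{preliminary}$ and $\eqref{Lipschitz estimates Green e 2}$ to estimate $\int|\nabla_y\widetilde G_\va|\,|\nabla^2u_0|$, $\int|\widetilde G_\va|\,|\nabla u_0|$, etc.\ directly. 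Your route is a bit more elementary within this paper's logical order, since it needs only Theorem \ref{W1p} and not the Green-function machinery of Theorems \ref{Green} and \ref{Lipschitz estimates Green}; the paper's route avoids the Morrey/rescaling bookkeeping you correctly flagged as the main obstacle and yields the $L^\infty$ bound on $w_{\va,1}$ in one step.

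One small correction: take $\om'$ with $\om(x_0,3r)\subset\om'\subset\om(x_0,4r)$, as the paper does, not $\om(x_0,2r)\subset\om'\subset\om(x_0,3r)$. With your containment, $w_\va^{(2)}$ is $\mathcal{L}_\va$-harmonic only on $\om(x_0,2r)$, so Lemma \ref{infty b} (which needs the equation on a region of radius three times the target) gives control only on $\om(x_0,2r/3)$, not on $\om(x_0,r)$. Enlarging $\om'$ into the available room up to radius $4r$ fixes this and also makes the boundary identity $w_\va^{(2)}=-\va\sum_k\chi_k\,\partial_k u_0$ available on all of $\Delta(x_0,3r)$, exactly as you need for the $\|w_\va\|_{L^\infty(\Delta(x_0,3r))}$ term.
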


\begin{proof}
By rescaling and translation, we can assume that $ r=1 $. Choose a domain $ \widetilde{\om} $, which is $ C^{1,\eta} $, such that $ \om(0,3)\subset\widetilde{\om}\subset\om(0,4) $. Consider
\begin{align}
w_{\va}=u_{\va}-u_{0}-\va \sum_{k=0}^{d}\chi_{k}^{\gamma}(x/\va)\pa_{k}u_0^{\gamma}=w_{\va,1}+w_{\va,2}\quad\text{in }\widetilde{\om},\nonumber
\end{align}
where
\begin{align}
\mathcal{L}_{\va}(w_{\va,1})&=\mathcal{L}_{\va}(w_{\va})\quad\text{in }\widetilde{\om}\quad w_{\va,1}\in W_0^{1,2}(\widetilde{\om};\mathbb{R}^m),\label{ww48}\\
\mathcal{L}_{\va}(w_{\va,2})&=0\quad\text{in }\widetilde{\om}\quad w_{\va,2}=w_{\va},\quad\text{on }\pa\widetilde{\om}.\label{ww49}
\end{align}
Since $ w_{\va,2}=w_{\va}=-\va \sum_{k=0}^{d}\chi_{k}^{\beta\gamma}(x/\va)\pa_{k}u_0^{\gamma} $ on $ \Delta(0,3) $ and $ \left\|\chi\right\|_{L^{\infty}}\leq C $ (by using $ \eqref{boundedness of chi} $), it follows from $ \eqref{infty b e} $ that
\begin{align}
\left\|w_{\va,2}\right\|_{L^{\infty}(\om(0,1))}&\leq C\va\|\nabla u_0\|_{L^{\infty}(\Delta(0,3))}+C\dashint_{\om(0,3)}|w_{\va,2}|\nonumber\\
&\leq C\va\|\nabla u_0\|_{L^{\infty}(\Delta(0,3))}+C\dashint_{\om(0,3)}|w_{\va}|+C\dashint_{\om(0,3)}|w_{\va,1}|\nonumber\\
&\leq C\dashint_{\om(0,3)}|u_{\va}-u_0|+C\va\left\|\nabla u_0\right\|_{L^{\infty}(\om(0,3))}+C\left\|w_{\va,1}\right\|_{L^{\infty}(\om(0,3))}.\nonumber
\end{align}
This gives
\begin{align}
\|u_{\va}-u_0\|_{L^{\infty}(\om(0,1))}\leq C\dashint_{\om(0,3)}|u_{\va}-u_0|+C\va\left\|\nabla u_0\right\|_{L^{\infty}(\om(0,3))}+C\left\|w_{\va,1}\right\|_{L^{\infty}(\om(0,3))}.\label{ww50}
\end{align}
To estimate $ w_{\va,1} $ on $ \om_3 $, we use the Green function representation
\begin{align}
w_{\va,1}(x)=\int_{\widetilde{\om}}\widetilde{G}_{\va}(x,y)\mathcal{L}_{\va}(w_{\va})dy,\nonumber
\end{align}
where $ \widetilde{G}_{\va}(x,y) $ denotes the matrix of the Green function for $ \mathcal{L}_{\va} $ in $ \widetilde{\om} $. From $ \eqref{ww47} $, we obtain
\begin{align}
[w_{\va,1}(x)]^{\delta}&=\va\int_{\widetilde{\om}}\pa_{y_j}\widetilde{G}_{\va}^{\delta\al}(x,y)\sum_{k=0}^{d}E_{jik}^{\al\gamma}(y/\va)\pa_{jk}^2u_0^{\gamma}(y)dy\nonumber\\
&\quad\quad-\va\int_{\widetilde{\om}}\pa_{y_j}\widetilde{G}_{\va}^{\delta\al}(x,y)(\mathcal{I}_{i}^{\al}(y)+\mathcal{J}_{i}^{\al}(y))-\va\int_{\widetilde{\om}}\widetilde{G}_{\va}^{\delta\al}(x,y)(\mathcal{M}^{\al}(y)+\mathcal{N}^{\al}(y))dy.\nonumber
\end{align}
From Lemma \ref{Lemma 2.8}, we have $ \|E\|_{L^{\infty}(\widetilde{\om})}\leq C $ and it follows that
\begin{align}
|w_{\va,1}(x)|&\leq C\va\int_{\widetilde{\om}}|\nabla_y\widetilde{G}_{\va}(x,y)||\nabla^2u_0(y)|dy+C\va\int_{\widetilde{\om}}|\nabla_y\widetilde{G}_{\va}(x,y)||\nabla u_0(y)|dy\nonumber\\
&\quad\quad+C\va\int_{\widetilde{\om}}|\widetilde{G}_{\va}(x,y)||\nabla^2u_0(y)|dy+C\va\int_{\widetilde{\om}}|\widetilde{G}_{\va}(x,y)||\nabla u_0(y)|dy.\nonumber
\end{align}
Since under the assumption of $ \mathcal{L}_{\va} $, from \eqref{preliminary}  and \eqref{Lipschitz estimates Green e 2}, we have, for $ p>d $,
\begin{align}
&C\va\int_{\widetilde{\om}}|\nabla_y\widetilde{G}_{\va}(x,y)||\nabla^2u_0(y)|dy+C\va\int_{\widetilde{\om}}|\widetilde{G}_{\va}(x,y)||\nabla^2u_0(y)|dy\nonumber\\
&\quad\quad\leq C\va\left\|\nabla^2u_0\right\|_{L^p(\om(0,4))}\left\{\left(\int_{\widetilde{\om}}|\nabla_y\widetilde{G}_{\va}(x,y)|^{p'}dy\right)^{\frac{1}{p'}}+\left(\int_{\widetilde{\om}}|\widetilde{G}_{\va}(x,y)|^{p'}dy\right)^{\frac{1}{p'}}\right\}\nonumber\\
&\quad\quad\leq C\va\left\|\nabla^2u_0\right\|_{L^p(\om(0,4))},\nonumber
\end{align}
and
\begin{align}
&C\va\int_{\widetilde{\om}}|\nabla_y\widetilde{G}_{\va}(x,y)||\nabla u_0(y)|dy+C\va\int_{\widetilde{\om}}|\widetilde{G}_{\va}(x,y)||\nabla u_0(y)|dy\nonumber\\
&\quad\quad\leq C\va\left\|\nabla u_0\right\|_{L^{\infty}(\om(0,4))}\left(\int_{\widetilde{\om}}|\nabla_y\widetilde{G}_{\va}(x,y)|dy+\int_{\widetilde{\om}}|\widetilde{G}_{\va}(x,y)|dy\right)\leq C\va\left\|\nabla u_0\right\|_{L^{\infty}(\om(0,4))}.\nonumber
\end{align}
\end{proof}

\begin{proof}[Proof of Theorem \ref{Green functions convergence}]
We first note that under the assumption of $ \mathcal{L}_{\va} $, $ \mathcal{L}_0 $ and $ \om $ in the theorem, the size estimate  and $ |\nabla_xG_0(x,y)|\leq C|x-y|^{1-d} $ hold for any $ x,y\in\om $ and $ x\neq y $. We now fix $ x_0,y_0\in\om $ and $ r=\frac{1}{8}|x_0-y_0|>0 $. For $ F\in C_0^{\infty}(\om(y_0,r);\mathbb{R}^m) $, let
\begin{align}
u_{\va}(x)=\int_{\widetilde{\om}}G_{\va}(x,y)F(y)dy\quad\text{and}\quad u_0(x)=\int_{\widetilde{\om}}G_{0}(x,y)F(y)dy.\nonumber
\end{align}
Then $ \mathcal{L}_{\va}(u_{\va})=\mathcal{L}_0(u_0)=F $ in $ \om $ and $ u_{\va}=u_0=0 $ on $ \pa\om $. Note that since $ \om $ is $ C^{1,1} $,
\begin{align}
\left\|\nabla^2u_0\right\|_{L^p(\om)}&\leq C\left\|F\right\|_{L^p(\om)}\quad\text{for }1<p<\infty,\label{ww51}\\
\left\|\nabla u_0\right\|_{L^{\infty}(\om)}&\leq Cr^{1-\frac{d}{p}}\left\|F\right\|_{L^p(\om(y_0,r))}\quad\text{for }p>d.\label{ww52}
\end{align}
The inequality $ \eqref{ww51} $ is the $ W^{2,p} $ estimate for the constant second order elliptic operator $ \mathcal{L}_0 $ in $ C^{1,1} $ domains (see \cite{Tr}) and $ \eqref{ww52} $ follows from the estimate $ |\nabla_yG_0(x,y)|\leq C|x-y|^{1-d} $ and H\"{o}lder's inequality.

Next, let
\begin{align}
w_{\va}=u_{\va}-u_0-\va \sum_{k=0}^{d}\chi_{k}^{\gamma}(x/\va)\pa_{k}u_0^{\gamma}=v_{\va,1}+v_{\va,2},\nonumber
\end{align}
where $ v_{\va,1}\in W_0^{1,2}(\om;\mathbb{R}^m) $ and $ \mathcal{L}_{\va}(v_{\va,1})=\mathcal{L}_{\va}(w_{\va}) $ in $ \om $. Observe that by the formula $ \eqref{ww47} $ for $ \mathcal{L}_{\va} $,
\begin{align}
\left\|\nabla v_{\va,1}\right\|_{L^2(\om)}&\leq C\va\left\|\nabla^2u_0\right\|_{L^2(\om)}+C\va\left\|\nabla u_0\right\|_{L^2(\om)}\leq C\va\left\|F\right\|_{L^2(\om(y_0,r))},\nonumber
\end{align}
where we have used the fact that $ \chi_k $ and $ E_{kij} $ are bounded, $ \eqref{Energy estimates} $ and $ \eqref{ww51} $. By H\"{o}lder's inequality and Sobolev inequalities, this implies that if $ d\geq 3 $,
\begin{align}
\left\|v_{\va,1}\right\|_{L^2(\om(x_0,r))}&\leq \left\|v_{\va,1}\right\|_{L^2(\om)}\leq Cr\left\|v_{\va,1}\right\|_{L^{\frac{2d}{d-2}}(\om)} 
\leq Cr\left\|\nabla v_{\va,1}\right\|_{L^{2}(\om)}\leq C\va r^{1+\frac{d}{2}-\frac{d}{p}}\left\|F\right\|_{L^p(\om(y_0,r))},\label{ww53}
\end{align}
for $ p>d $. We point out that if $ d=2 $, one has
\begin{align}
\left\|v_{\va,1}\right\|_{L^2(\om(x_0,r))}\leq C\va r\left\|F\right\|_{L^2(\om(y_0,r))},\nonumber
\end{align}
in place of $ \eqref{ww53} $. To see this we use the $ W^{1,p} $ estimates $ \eqref{W^{1,p} estimates} $. Thus there exists some $ \overline{p}<2 $ such that
\begin{align}
\left\|\nabla v_{\va,1}\right\|_{L^{\overline{p}}(\om)}\leq C\va r\|\nabla^2u_0\|_{L^{\overline{p}}(\om)}+C\va r\|\nabla u_0\|_{L^{\overline{p}}(\om)}\leq C\va\|F\|_{L^{\overline{p}}(\om(y_0,r))},\label{ww54}
\end{align}
which, by H\"{o}lder's inequality and Sobolev inequality, leads to
\begin{align}
\|v_{\va,1}\|_{L^2(\om(x_0,r))}&\leq Cr^{1-\frac{2}{q}}\|v_{\va,1}\|_{L^q(\om(x_0,r))}\leq Cr^{1-\frac{2}{q}}\|v_{\va,1}\|_{L^q(\om)}\leq Cr^{1-\frac{2}{q}}\|\nabla v_{\va,1}\|_{L^{\overline{p}}(\om)}\nonumber\\
&\leq Cr^{2-\frac{2}{\overline{p}}}\|\nabla v_{\va,1}\|_{L^{\overline{p}}(\om)}\leq C\va r^{2-\frac{2}{\overline{p}}}\|F\|_{L^{\overline{p}}(\om(y_0,r))}\leq C\va r\left\|F\right\|_{L^2(\om(y_0,r))},\label{ww55}
\end{align}
where $ \frac{1}{q}=\frac{1}{\overline{p}}-\frac{1}{2} $.

Observe that since $ \mathcal{L}_{\va}(v_{\va,2})=0 $ in $ \om $ and $ v_{\va,2}=w_{\va} $ on $ \pa\om $, by the maximum principle $ \eqref{ww32} $, we have
\begin{align}
\|v_{\va,2}\|_{L^{\infty}(\om)}\leq C\left\|v_{\va,2}\right\|_{L^{\infty}(\pa\om)}\leq C\va\left\|\nabla u_0\right\|_{L^{\infty}(\pa\om)}.\label{ww56}
\end{align}
From $ \eqref{ww51} $-$ \eqref{ww56} $, we obtain
\begin{align}
\left\|u_{\va}-u_{0}\right\|_{L^{2}(\om(x_{0},r))}
&\leq\left\|v_{\va,1}\right\|_{L^{2}(\om(x_{0},r))}+\left\|v_{\va,2}\right\|_{L^{2}(\om(x_{0},r))}+C\left\|\nabla u_{0}\right\|_{L^{2}(\om(x_{0},r))}\nonumber\\
&\leq\left\|v_{\va,1}\right\|_{L^{2}(\om(x_{0},r))}+\left\|v_{\va,2}\right\|_{L^{2}(\om(x_{0},r))}+C\va r^{\frac{d}{2}}\left\|\nabla u_{0}\right\|_{L^{\infty}(\om)}\nonumber\\
&\leq\left\|v_{\va,1}\right\|_{L^{2}(\om(x_{0}, r))}+C\va r^{\frac{d}{2}}\left\|\nabla u_{0}\right\|_{L^{\infty}(\om)}\leq C \va r^{1+\frac{d}{2}-\frac{d}{p}}\|F\|_{L^{p}(\om(y_{0},r))},\nonumber
\end{align}
where $ p>d $. This, together with Lemma \ref{ww33} and $ \eqref{ww51} $, gives
\begin{align}
|u_{\va}(x_{0})-u_{0}(x_{0})| \leq C \va r^{1-\frac{d}{p}}\|F\|_{L^p(\om(y_{0},r))}.\label{ww57}
\end{align}
Then, it follows by the duality arguments that
\begin{align}
\left(\int_{\om(y_{0},r)}|G_{\va}(x_{0}, y)-G_{0}(x_{0},y)|^{p^{\prime}}dy\right)^{\frac{1}{p'}}\leq C\va r^{1-\frac{d}{p}}\quad\text{for any }p>d.\label{ww58}
\end{align}
Finally, since $ \mathcal{L}_{\va}^{*}(G_{\va}(x_{0}, \cdot))=\mathcal{L}_{0}^{*}(G_{0}(x_{0},\cdot))=0 $ in $ \om(y_{0}, r)$, we may invoke Lemma \ref{ww33} again to conclude that
\begin{align}
|G_{\va}(x_{0}, y_{0})-G_{0}(x_{0}, y_{0})|&\leq \dashint_{\om(y_{0},r)}|G_{\va}(x_{0}, y)-G_{0}(x_{0}, y)| dy+C \va\left\|\nabla_{y} G_{0}\left(x_{0}, \cdot\right)\right\|_{L^{\infty}(\om(y_{0},r))} \nonumber\\
&\quad\quad+C_{p} \va r^{1-\frac{d}{p}}\left\|\nabla_{y}^{2} G_{0}(x_{0},\cdot)\right\|_{L^p(\om(y_{0},r))}\leq C \va r^{1-d},\nonumber
\end{align}
where we have used
\begin{align}
\left(\dashint_{\om(y_0, r)}\left|\nabla_{y}^{2} G_{0}(x_{0}, y)\right|^{p}dy\right)^{\frac{1}{p}}&\leq Cr^{-2}\left\|G_{0}(x_{0},\cdot)\right\|_{L^{\infty}(\om(x_0,2 r))}\leq Cr^{-d},\nonumber
\end{align}
obtained by using $ W^{2,p} $ estimates on $ C^{1,1} $ domains for $ \mathcal{L}_{0}^* $, which is the localization of the estimates $ \eqref{ww51} $.
\end{proof}

\begin{rem}
$ \eqref{Convergence rates Lp} $ follows directly from $ \eqref{Convergence of Green functions} $ by using the standard arguments of Theorem 3.4 in \cite{Kenig2}.
\end{rem}

\section*{Acknowledgments}
We are grateful to Professor Qiang Xu of Lanzhou University for warm guidance on the topics of the homogenization theory for elliptic operators with lower order terms. We are also grateful to Professor Zhongwei Shen of the University of Kentucky for some essential hints on the Homogenization theory. We sincerely thank the anonymous reviewers for their constructive revision suggestions.

\end{document}